\documentclass[10pt]{amsart}

\usepackage[pagebackref,colorlinks=true]{hyperref}  %za cvetni linkove (obache bavi)

\usepackage{amsfonts}
\usepackage{amsmath,amsthm,amssymb,amscd,enumerate,eucal,url}

\usepackage{eucal,url,amssymb,verbatim,
enumerate,amscd,
}

\allowdisplaybreaks[1]

\numberwithin{equation}{section}

\newtheorem{thrm}{Theorem}[section]
\newtheorem{lemma}[thrm]{Lemma}

\usepackage[margin=1in]{geometry}
\linespread{1.06}

\sloppy

\newcommand{\R}{\mathbb{R}}
\newcommand{\lc}{\langle}
\newcommand{\rc}{\rangle}
\newcommand{\br}[1]{\overline{#1}}
\newcommand{\dt}{\,.\,}
\newcommand{\spc}{\ \,}

\begin{document}

\begin{abstract} 
We exploit the Cartan-K\"ahler theory to prove the
local existence of real analytic quaternionic contact structures for any
prescribed values of the respective curvature functions and their covariant
derivatives at a given point on a manifold.  We show that, in a certain
sense, the different real analytic quaternionic contact geometries in $4n+3$
dimensions depend, modulo diffeomorphisms, on $2n+2$ real analytic functions
of $2n+3$ variables.  
\end{abstract}

\keywords{quaternionic contact, equivalence problem, Cartan connection, involution}

\subjclass{58G30, 53C17}

\title[On the existence of local quaternionic contact geometries]
{On the existence of local quaternionic contact geometries}

\date{\today}

\author{Ivan Minchev}
\address[Ivan Minchev]{University
of Sofia, Faculty of Mathematics and Informatics, blvd. James Bourchier 5, 1164 Sofia, Bulgaria}
\email{minchev@fmi.uni-sofia.bg}

\author{Jan Slov\'{a}k}
\address[Jan Slov\'{a}k]{
Department of Mathematics and Statistics, Masaryk University, Kotlarska 2, 61137 Brno,
Czech Republic}
\email{slovak@muni.cz}

\maketitle

\setcounter{tocdepth}{2} \tableofcontents

%%%%%%%%%%
\section{Introduction}

The quaternionic contact (briefly: qc) structures are a rather recently
developed concept in the differential geometry that has proven to be a very
useful tool when dealing with a certain type of analytic problems
concerning the extremals and the choice of a best constant in the $L^2$
Folland-Stein inequality on the quaternionic Heisenberg group \cite{IMV2},
\cite{IMV}, \cite{IMV3}.  Originally, the concept was introduced by O. 
Biquard \cite{Biq1}, who was partially motivated by a preceding result of C. 
LeBrun \cite{LeB} concerning the existence of a large family of complete
quaternionic K\"ahler metrics of negative scalar curvature, defined on the
unit ball $B^{4n+4}\subset\mathbb R^{4n+4}$.  By interpreting $B^{4n+4}$ as
a quaternionic hyperbolic space, $B^{4n+4}\cong Sp(n+1,1)/Sp(n)\times
Sp(1)$, LeBrun was able to construct deformations of the associated twistor
space $\mathcal Z$---a complex manifold which, in this case, is
biholomorphically equivalent to a certain open subset of the complex
projective space $\mathbb C\mathbb P^{2n+3}$---that preserve its induced
contact structure and anti-holomorphic involution, and thus can be pushed
down to produce deformations of the standard (hyperbolic) quaternionic
K\"ahler metric of $B^{4n+4}$.  The whole construction is parametrized by
an arbitrary choice of a sufficiently small holomorphic function of ${2n+3}$
complex variables and the result in \cite{LeB} is that the moduli space of
the so arising family of complete quaternionic K\"ahler metrics on $B^{4n+4}$
is infinite dimensional.  LeBrun also observed that, if multiplied by a
function that vanishes along the boundary sphere $S^{4n+3}$ to order two,
the deformed metric tensors on $B^{4n+4}$ extend smoothly across $S^{4n+3}$
but their rank drops to $4$ there.  

It was discovered later by Biquard
\cite{Biq1} that the arising structure on $S^{4n+3}$ is essentially given by
a certain very special type of a co-dimension 3 distribution which he
introduced as a qc structure on $S^{4n+3}$ and called the conformal boundary
at infinity of the corresponding quaternionic K\"ahler metric on $B^{4n+4}$. 
Biquard proved also the converse \cite{Biq1}: He showed that each real
analytic qc structure on a manifold $M$ is the conformal boundary at
infinity of a (germ) unique quaternionic K\"ahler metric defined in a small
neighborhood of $M$.  Therefore, already by the very appearance of the new
concept of a qc geometry, it was clear that there exist infinitely many
examples---namely, the global qc structures on the sphere $S^{4n+3}$
obtained by the LeBrun's deformations of $B^{4n+4}$.  

However, the number of
the explicitly known examples remains so far very restricted.  There is
essentially only one generic method for obtaining such structures
explicitly.  It is based on the existence of a certain very special type of
Riemannian manifolds, the so called 3-Sasaki like spaces.  These are
Riemannian manifolds that admit a special triple $R_1, R_2,R_3$ of Killing
vector fields, subject to some additional requirements (we refer to
\cite{IMV3} and the references therein for more detail on the topic), which
carry a natural qc structure defined by the orthogonal complement of the
triple $\{R_1, R_2,R_3\}$.  There are no explicit examples of qc structures
(not even locally) for which it is proven that they can not be generated by
the above construction.

The formal similarity with the definition of a CR (Cauchy-Riemann) manifold,
considered in the complex analysis, might suggests that one should look for
new examples of qc structures by studding hypersurfaces in the quaternionic
coordinate space $\mathbb H^{n+1}$.  This idea, however, turned out to be
rather unproductive in the quaternionic case: In \cite{IMV5} it was shown
that each qc hypersurfaces embedded in $\mathbb H^{n+1}$ is necessarily
given by a quadratic form there and that all such hypersurfaces are locally
equivalent, as qc manifolds, to the standard (3-Sasaki) sphere.

In the present paper we reformulate the problem of local existence of qc
structures as a problem of existence of integral manifolds of an appropriate
exterior differential system to which we apply methods from the
Cartan-K\"ahler theory and show its integrability.  The definition of the
respective exterior differential system is based entirely on the formulae
obtained in \cite{MS} for the associated canonical Cartan connection and its
curvature.  We compute explicitly the relevant character sequence $v_{1},
v_{2},\dots$ of the system (cf.  the discussion in Section~\ref{ex_dif_sys})
and show that it passes the so called Cartan's test, i.e., that the system
is in involution.  From there we obtain our main result in the paper---this
is Theorem~\ref{main}---that asserts the local existence of qc structures
for any prescribed values of their respective curvatures and associated
covariant derivatives at any fixed point on a manifold.  

Furthermore, since
the last non-zero character of the associated exterior differential system
is $v_{2n+3}=2n+2$, we obtain a certain description for the associated
moduli spaces.  Namely, we have that, in a certain sense (the precise
formulation requires care, cf.  \cite{Bry}), the real analytic qc structures
in $4n+3$ dimensions depend, modulo diffeomorphisms, on $2n+2$ functions of
$2n+3$ variables.  Comparing our result to the LeBrun's family of qc
structures on the sphere $S^{4n+3}$---parametrized by a single holomorphic
function of $2n+3$ complex variables (which has the same generality as two
real analytic functions of $2n+3$ real variables)---we observe that it
simply is not "big enough" in order to provide a local model for all
possible qc geometries in dimension $4n+3$.

{\bf Acknowledgments.} I.M. is supported by Contract DFNI
I02/4/12.12.2014 and Contract 80-10-33/2017 with the Sofia University
"St.Kl.Ohridski".  
%I.M.  is also supported by a SoMoPro II Fellowship which
%is co-funded by the European Commission\footnote{This article reflects only
%the author's views and the EU is not liable for any use that may be made of
%the information contained therein.} from \lq\lq{}People\rq\rq{} specific
%program (Marie Curie Actions) within the EU Seventh Framework Program on the
%basis of the grant agreement REA No.  291782.  It is further co-financed by
%the South-Moravian Region.  
J.S. is supported by the grant P201/12/G028 of
the Grant Agency of the Czech Republic.

%%%%%%%%%%
\section{Quaternionic contact structures as integral manifolds of exterior differential systems}

Our work has been inspired and heavily influenced by the series of lectures by
Robert Bryant at the Winter School Geometry and Physics in Srn\'{\i}, January
2015, essentially along the lines of \cite{Bry}. In particular our description
of the qc structures in the following paragraphs follows this
source closely. 

\subsection{Exterior differential systems}\label{ex_dif_sys} 
In general, an exterior differential system is a graded differentially
closed ideal $\mathcal I$ in the algebra of differential forms on a
manifold $N$. Integral manifolds of such a system are immersions $f:M\to N$
such that the pullback $f^*\alpha$ of any form $\alpha \in\mathcal I$ 
vanishes on $M$. Typically, the differential ideal $\mathcal I$ encounters
all differential consequences of a system of partial differential equations
and understanding the algebraic structure of $\mathcal I$ helps to
understand the structure of the solution set. We need a special
form of exterior differential systems corresponding to the 
geometric structures modelled on homogeneous spaces, the so called Cartan
geometries. This means our system will be generated by one-forms forming the
Cartan's coframing intrinsic to a geometric structure and its differential
consequences (the curvature and its derivatives).

For this paragraph, we adopt the following ranges of indices: 
$1\le a,b,c,d,e\le n$, $1\le s\le l$, where $l$
and $n$ are some fixed positive integers.

We consider the following general problem: \emph{Given a set of real 
analytic functions $C^a_{bc}:\mathbb R^l\rightarrow\mathbb R$ 
with $C^a_{bc}=-C^a_{cb}$, find linearly independent one-forms $\omega^a$, 
defined on a domain $\Omega\subset\mathbb R^n$, 
and a mapping $u=(u^s):\Omega \rightarrow \mathbb R^l$ so that the equations
\begin{equation}\label{str_eq_d_omega}
d\omega^a=-\frac{1}{2}C^a_{bc}(u)\,\omega^b\wedge\omega^c
\end{equation} 
are satisfied everywhere on $\Omega$.}
 
The  problem is diffeomorphism invariant in the sense that 
if $(\omega^a, u)$ is any solution of \eqref{str_eq_d_omega} 
defined on  $\Omega\subset \mathbb R^n$ and 
$\Phi: \Omega^{'}\rightarrow \Omega$ is a diffeomorphism, 
then $(\Phi^\ast(\omega^a),\Phi^\ast(u))$ is a solution 
of \eqref{str_eq_d_omega} on $\Omega^{'}$. 
We regard any such two solutions as equivalent and 
we are interested in the following question: 
\emph{How many non-equivalent solutions does a given problem of this 
type admit?}    

Next, we reformulate this into a question on solutions to an exterior differential
system.
Let $N= GL(n,\mathbb R)\times\mathbb R^n\times\mathbb R^l$ and denote by
$p=(p^a_b):N\rightarrow GL(n,\mathbb R)$, $x=(x^a):N\rightarrow \mathbb R^n$
and $u=(u^s):N\rightarrow\mathbb R^l$ the respective projections.  Setting
$\omega^a\overset{def}{\ =\ }p^a_b\,dx^b$,
we consider the differential ideal $\mathcal I$ on $N$ generated by the 
set of two-forms
\begin{equation*}
\Upsilon^a\overset{def}{=}d\omega^a+\frac{1}{2}C^{a}_{bc}(u)\,\omega^b\wedge\omega^c.
\end{equation*}

Then, the solutions of \eqref{str_eq_d_omega} are precisely the
$n$-dimensional integral
manifolds of $\mathcal I$ on which the restriction of the $n$-form
$\omega^1\wedge\dots\wedge\omega^n$ is nowhere vanishing. %; the triple
%$(N,\mathcal I, \omega^1\wedge\dots\wedge\omega^n)$ is a an example of an
%exterior differential system.  
The reformulation of the problem
\eqref{str_eq_d_omega} in this setting allows for an easy access of tools
from the Cartan-K\"ahler theory.  We shall see, we may restrict our attention to a
certain set of sufficient conditions for the integrability of the system,
known as the Cartan's Third Theorem, and refer the reader to \cite{Bry} or
\cite{BCGGG} and the references therein for a more detailed and 
general discussion on the topic.

Differentiating \eqref{str_eq_d_omega} gives
\begin{equation}\label{str_eq_d2_omega}
\begin{aligned}
0&=d^2\omega^a=-\frac12 d(C^a_{bc}\omega^b\wedge\omega^c)
\\ &=
-\frac{1}{2}\frac{\partial C^a_{bc}(u)}{\partial u^s}\,du^s\wedge\omega^b\wedge\omega^c+
\frac{1}{3}\bigl(C^a_{be}(u)\,C^e_{cd}(u)+C^a_{ce}(u)\,C^e_{db}(u)+C^a_{de}(u)\,C^e_{bc}(u)
\bigr)\,\omega^b\wedge\omega^c\wedge\omega^d.
\end{aligned}
\end{equation}
If $C^a_{bc}$ were curvature functions of a Cartan connection, then 
these differential consequences are governed by the well known
Bianchi identities, and they are then quadratic.

\subsection{Assumptions and conclusions}\label{difsys_conclusions}
In order to employ the Cartan-K\"ahler theory we need to replace the
quadratic terms by some linear objects.  
Thus we posit the following two
assumptions:

\smallskip  
\noindent{\bf Assumption I: }\emph{Let us assume that there exist a real analytic
mapping $F=(F^s_a):\mathbb R^l\rightarrow \mathbb R^{ln}$ for which 
%the second term on the RHS of \eqref{str_eq_d2_omega} can be expressed as
\begin{equation}\label{Assumption_I}
d\bigl(C^a_{bc}\omega^b\wedge \omega^c\bigr) = \frac{\partial
C^a_{bc}(u)}{\partial u^s}\bigl(du^s + F^s_d \omega^d
\bigr)\wedge\omega^b\wedge \omega^c
.\end{equation}}

Of course, this assumption is equivalent to the requirement
\begin{equation*}
\frac{1}{3}\bigl(C^a_{be}(u)\,C^e_{cd}(u)+C^a_{ce}(u)\,C^e_{db}(u)+C^a_{de}(u)\,C^e_{bc}(u)
\bigr)\,\omega^b\wedge\omega^c\wedge\omega^d\ 
 =\ -\frac{1}{2}\frac{\partial C^a_{bc}(u)}{\partial u^s}\,F^s_d(u)\,\omega^b\wedge\omega^c\wedge\omega^d.
\end{equation*}
and then, on the integral manifolds of $\mathcal I$, 
\eqref{str_eq_d2_omega} takes the form 
\begin{equation}\label{0_eq_d2omega}
0=d^2\omega^a=-\frac{1}{2}\frac{\partial C^a_{bc}(u)}{\partial u^s}\,\Big(du^s+F^s_d(u)\,\omega^d\Big)\wedge\omega^b\wedge\omega^c.
\end{equation}

\smallskip
{\bf Assumption II: } \emph{Interpreting \eqref{0_eq_d2omega} as a system of
algebraic equations for the unknown one-forms $du^s$ (for a fixed $u$), we
assume that it is non-degenerate, i.e, that \eqref{0_eq_d2omega} yields
$du^s\in \text{span}\{\omega^a\}$.  
As a consequence, at any $u$, the set of all
solutions $du^s$ is parametrized by a certain vector space (since the
system \eqref{0_eq_d2omega} is linear).  We will assume that the dimension
of this vector space is a constant $D$ (independent of $u$).}

Let us take the latter two assumptions as granted in the rest of this
paragraph.
Since $\mathcal I$ is a differential ideal, it is algebraically generated by
the forms $\Upsilon^a$ and $d\Upsilon^a$.  By \eqref{Assumption_I}, we have
\begin{equation}
2d\Upsilon^a =d\Big(C^{a}_{bc}(u)\,\omega^b\wedge\omega^c\Big)=
\frac{\partial C^a_{bc}(u)}{\partial
u^s}\,\Big(du^s+F^s_d(u)\omega^d\Big)\wedge\omega^b\wedge\omega^c+2C^a_{bc}\,\Upsilon^b\wedge\omega^c 
\end{equation}
and therefore, $\mathcal I$ is algebraically generated by $\Upsilon^a$ and
the three-forms 
\begin{equation*} \Xi^a\overset{def}{\ =\ }\frac{\partial
C^a_{bc}(u)}{\partial
u^s}\,\Big(du^s+F^s_d(u)\omega^d\Big)\wedge\omega^b\wedge\omega^c. 
\end{equation*}

If we take  $\Omega^a$ to be some other basis of one-forms  for the vector space span$\{\omega^a\}$, we can express the forms $\Xi^a$  as
\begin{equation}
\Xi^a=\Pi^a_{bc}\wedge\Omega^b\wedge\Omega^c,
\end{equation}
where $\Pi^a_{bc}$ are linear combinations of the linearly independent
one-forms 
$$
\{du^s+F^s_d(u)\omega^d\ :\ s=1,\dots,n\}
.$$

Consider the  sequence $v_1(u),v_2(u),\dots,v_n(u)$ of non-negative integers
defined, for any fixed $u$, as $v_1(u)=0$,  
\begin{equation*}
%\begin{split}
v_d(u)=\text{rank}\Big\{\Pi^a_{bc}(u)\ :\ a=%&
1,\dots,n,\ 1\le b<c\le d\Big\}\\
%&
-\ \text{rank}\Big\{\Pi^a_{bc}\ :\ a=1,\dots,n,\ 1\le b<c\le d-1\Big\};
%\end{split}
\end{equation*}
for $1< d\le n-1$, and 
\begin{equation*}
v_n(u)=l-\ \text{rank}\Big\{\Pi^a_{bc}\ :\ a=1\dots n,\ 1\le b<c\le n-1\Big\}.
\end{equation*}

If, for every $u\in \mathbb R^l$, one can find a basis $\Omega^a$ of
span$\{\omega^a\}$ for which the Cartan's Test
\begin{equation} \label{Cartan_test_general}
v_1(u)+2v_2(u)+\dots+nv_n(u)=D,
\end{equation}
is satisfied (remind $D$ is the constant dimension from the above Assumption
II), then the system \eqref{str_eq_d_omega} is said to be \emph{in
involution} (this method of computation for the Cartan's sequence of an
ideal is based on \cite{BCGGG}, Proposition 1.15).  It is an important
result of the theory of exterior differential systems (essentially due to
Cartan, cf.  \cite{Bry}) that if the system is in involution, then for any
$u_0$, there exists a solution $(\omega^a, u)$ of \eqref{str_eq_d_omega}
defined on a neighborhood $\Omega$ of $0\in \mathbb R^n$ for which
$u(0)=u_0$ and 
$$
{du^s}|_{0}=F^s_d(u_0){\omega^d}|_{0}
.$$ 
Moreover, in certain sense (see again \cite{Bry} for a more precise
formulation), the \emph{different solutions $(\omega^a, u)$ of
\eqref{str_eq_d_omega}, modulo diffeomorphisms, depend on $v_k(u)$ functions
of $k$ variables, where $v_k(u)$ is the last non-vanishing integer in the
Cartan's sequence $v_1(u),\dots, v_n(u)$.}

The geometric significance of the above is quite clear: Assume that we are
interested in a geometric structure of a certain type that can be
characterized by a unique Cartan connection.  Then, the structure equations
of the corresponding Cartan connection are some equations of type
\eqref{str_eq_d_omega} involving the curvature of the connection.  The
solutions of the so arising exterior differential system are precisely the
different local geometries of the fixed type that we are considering.

\subsection{Quaternionic contact manifolds}

Let $M$ be a $(4n+3)$-dimensional manifold and $H$ be a smooth distribution on $M$ of codimension three. 
The pair $(M,H)$ is said to be a quaternionic contact (abbr. qc) structure if around each point of $M$ there exist 1-forms $\eta_1,\eta_2,\eta_3$ with common kernel $H,$ a positive definite inner product $g$ on $H,$ and endomorphisms $ I_1, I_2, I_3$ of $H,$ satisfying
\begin{gather}\label{def-hat-eta}
(I_1)^2=( I_2)^2=( I_3)^2=-\text{id}_H,\qquad I_1\,I_2=- I_2\, I_1=I_3,\\\nonumber
d\eta_s(X,Y)=2 g(I_sX,Y) \qquad \ \text{ for all }\ X,Y\in H.
\end{gather}

As  shown in \cite{Biq1}, if $\dim(M)>7$, one can always find, locally, a triple $\xi_1,\xi_2,\xi_3$  of  vector fields on $M$ satisfying for all $X\in H$,
\begin{equation}\label{Reeb}
\eta_s(\xi_t)=\delta^s_t,\qquad\eta_s(\xi_t,X)=-d\eta_t(\xi_s,X)
\end{equation}
($\delta^s_t$ being the Kronecker delta). $\xi_1,\xi_2,\xi_3$ are called Reeb vector fields corresponding to $\eta_1,\eta_2,\eta_3$. In the seven dimensional case the existence of Reeb vector fields is an additional integrability condition on the qc structure (cf. \cite{D}) which we will assume to be satisfied.  

It is well known that the qc structures represent a very interesting
instance of the so called parabolic geometries, i.e. Cartan geometries
modelled on $G/P$ with $G$ semisimple and $P\subset G$ parabolic. The above
definition is a description of these geometries with the additional
assumption that their harmonic torsions vanish. 

As the authors showed in \cite{MS}, the canonical Cartan connection with the
properly normalized curvature can be computed explicitly, including
closed formulae for all its curvature components and their covariant
derivatives. This provides the complete background for viewing the
structures as integral manifolds of an appropriate exterior differential
system (cf. paragraph \ref{qc_as_EDS}), running the Cartan
test, checking the involution of the system, and concluding the generality of
the structures in question (the section \ref{sec3} below). 

For the convenience of the readers we are going to explain the results from
\cite{MS} in detail now. This requires to introduce some notation first. 

\subsection{Conventions for complex tensors and indices}\label{prelim}
In the sequel, we use without comment the convention of summation
over repeating indices; the small Greek indices $\alpha,\beta,\gamma,\dots$
will have the range $1,\dots,2n$, whereas the indices $s,t,k,l,m$ will be
running from $1$ to $3$.

Consider the Euclidean vector space $\R^{4n}$ with its standard inner
product $\lc,\rc$ and a quaternionic structure induced by the identification
$\R^{4n}\cong\mathbb H^n$ with the quaternion coordinate space $\mathbb
H^n$.  The latter means that we endow $\R^{4n}$ with a fixed triple
$J_1,J_2,J_3$ of complex structures which are Hermitian with respect to
$\lc,\rc$ and satisfy $J_1\,J_2=-J_2\,J_1=J_3.$ The complex vector space
$\mathbb C^{4n}$, being the complexification of $\R^{4n}$, splits as a direct
sum of $+i$ and $-i$ eigenspaces, $\mathbb C^{4n}=\mathcal W\oplus\br
{\mathcal W}$, with respect to the complex structure $J_1.$ The complex
2-form $\pi$,
\begin{equation*}
\pi(u,v)\overset{def}{=}\lc J_2u,v\rc+i\lc J_3u,v\rc,\qquad u,v\in \mathbb C^{4n},
\end{equation*}
has type $(2,0)$ with respect to $J_1$, i.e., it satisfies $\pi(J_1u,v)=\pi(u,J_1v)=i\pi(u,v)$. Let us fix an $\lc,\rc$-orthonormal basis (once and for all)  
\begin{equation}\label{fixed-basis}
\{\mathfrak e_\alpha\in \mathcal W, \mathfrak e_{\bar\alpha}\in \br{\mathcal W}\},\qquad \mathfrak e_{\bar\alpha}=\br{ \mathfrak e_\alpha},
\end{equation}
with dual basis $\{\mathfrak e^\alpha, \mathfrak e^{\bar\alpha}\}$ 
so that $\pi=\mathfrak e^1\wedge \mathfrak e^{n+1}+\mathfrak e^2\wedge \mathfrak e^{n+2}+\dots + \mathfrak e^{n}\wedge \mathfrak e^{2n}$. Then, we have
\begin{equation}
\lc,\rc = g_{\alpha\bar\beta}\, \mathfrak e^\alpha\otimes \mathfrak e^{\bar\beta} + g_{\bar\alpha\beta}\, \mathfrak e^{\bar\alpha}\otimes \mathfrak e^{\beta},\qquad \pi=\pi_{\alpha\beta}\, \mathfrak e^{\alpha}\wedge \mathfrak e^{\beta} 
\end{equation}
with
\begin{equation}\label{constants}
 g_{\alpha\bar\beta}=g_{\bar\beta\alpha}=\begin{cases}1, & \mbox{if } \alpha=\beta\\0, & \mbox{if } \alpha\ne\beta 
 \end{cases},\qquad \pi_{\alpha\beta}=-\pi_{\beta\alpha}=\begin{cases} 1,& \mbox{if } \alpha+n=\beta\\-1,& \mbox{if } \alpha=\beta+n\\0, & \mbox{otherwise}.
 \end{cases}
\end{equation}

Any array of complex numbers indexed by lower and upper Greek letters (with and without bars) corresponds to a tensor, e.g., $\{A_{\alpha\dt\dt}^{\spc\beta\bar\gamma}\}$ corresponds to the tensor 
\begin{equation*}
A_{\alpha\dt\dt}^{\spc\beta\bar\gamma}\,\mathfrak e^\alpha\otimes\mathfrak e_{\beta}\otimes\mathfrak e_{\bar\gamma}.
\end{equation*}     
Clearly, the vertical as well as the horizontal position of an index carries information about the tensor. For two-tensors, we take $B^\alpha_\beta$ to mean $B^{\spc\alpha}_{\beta\dt},$ i.e., the lower index is assumed to be first.  We use $g_{\alpha\bar\beta}$ and $g^{\alpha\bar\beta}=g^{\bar\beta\alpha}=g_{\alpha\bar\beta}$ to lower and raise indices in the usual way, e.g., 
$$A^{\spc\beta}_{\alpha\dt\gamma}=g_{\bar\sigma\gamma}\, A_{\alpha\dt\dt}^{\spc\beta\bar\sigma},\qquad A^{\bar\alpha\beta\bar\gamma}=g^{\bar\alpha\sigma}A_{\sigma\dt\dt}^{\spc\beta\bar\gamma}.$$

We  use  the following convention:  Whenever an array $\{A_{\alpha\dt\dt}^{\spc\beta\bar\gamma}\}$ appears,  the array $\{A_{\bar\alpha\dt\dt}^{\spc\bar\beta\gamma}\}$ will be assumed to be defined, by default, by the complex conjugation
 $$A_{\bar\alpha\dt\dt}^{\spc\bar\beta\gamma}= \overline{A_{\alpha\dt\dt}^{\spc\beta\bar\gamma}}.$$ This means that we interpret $\{A_{\alpha\dt\dt}^{\spc\beta\bar\gamma}\}$ as a representation of a real tensor, defined on $\R^{4n}$, 
 with respect to the fixed complex basis \eqref{fixed-basis}; the corresponding real tensor in this case is
\begin{equation*}
A_{\alpha\dt\dt}^{\spc\beta\bar\gamma}\,\mathfrak e^\alpha\otimes\mathfrak e_{\beta}\otimes\mathfrak e_{\bar\gamma} + A_{\bar\alpha\dt\dt}^{\spc\bar\beta\gamma}\,\mathfrak e^{\bar\alpha}\otimes\mathfrak e_{\bar\beta}\otimes\mathfrak e_{\gamma}.
\end{equation*}     
 
 Notice that we have $\pi^{\alpha}_{\bar\sigma}\,\pi^{\bar\sigma}_{\beta}=-\ \delta^{\alpha}_{\beta}$ ($\delta^{\alpha}_{\beta}$ is the Kronecker delta). We  introduce a complex antilinear endomorphism $\mathfrak j$ of the tensor algebra of $\R^{4n}$, which takes a tensor with components $T_{\alpha_1\dots\alpha_k\bar\beta_1\dots\bar\beta_l\dots}$ to a tensor of the same type, with components $(\mathfrak jT)_{\alpha_1\dots\alpha_k\bar\beta_1\dots\bar\beta_l\dots}$, by the formula
 \begin{equation}
 (\mathfrak jT)_{\alpha_1\dots\alpha_k\bar\beta_1\dots\bar\beta_l\dots}=\sum_{\bar\sigma_1\dots\bar\sigma_k\tau_1\dots\tau_l\dots}\pi^{\bar\sigma_1}_{\alpha_1}\dots\pi^{\bar\sigma_k}_{\alpha_k}\,\pi^{\tau_1}_{\bar\beta_1}\dots\pi^{\tau_l}_{\bar\beta_l}\dots T_{\bar\sigma_1\dots\bar\sigma_k\tau_1\dots\tau_l\dots}.
 \end{equation}

By definition, the group $Sp(n)$ consists of all endomorphisms of $\R^{4n}$ that preserve the inner product $\lc,\rc$ and commute with the complex structures $J_1,J_2$ and $J_3$.   
With the above notation, we can alternatively describe $Sp(n)$ as the set of all two-tensors $\{U^\alpha_\beta\}$ satisfying
\begin{equation}\label{def-Spn}
 g_{\sigma\bar\tau}U^{\sigma}_{\alpha}U^{\bar\tau}_{\bar\beta}=g_{\alpha\bar\beta},\qquad \pi_{\sigma\tau}U^{\sigma}_{\alpha}U^{\tau}_{\beta}=\pi_{\alpha\beta}.
 \end{equation}
For its Lie algebra, $sp(n)$, we have the following description:
\begin{lemma}\label{sp(n)} For a tensor $\{X_{\alpha\bar\beta}\}$, the following conditions are equivalent:
\par (1) $\{X_{\alpha\bar\beta}\} \in sp(n)$.
\par (2) $X_{\alpha\bar\beta}=-X_{\bar\beta\alpha},$ $(\mathfrak jX)_{\alpha\bar\beta}=X_{\alpha\bar\beta}.$
 \par (3) $X_{\beta}^{\alpha}=\pi^{\alpha\sigma}Y_{\sigma\beta}$ for some tensor $\{Y_{\alpha\beta}\}$ satisfying $Y_{\alpha\beta}=Y_{\beta\alpha}$ and $(\mathfrak jY)_{\alpha\beta}=Y_{\alpha\beta}$.
\end{lemma}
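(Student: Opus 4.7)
The plan is to prove all three conditions equivalent by linearizing the defining relations \eqref{def-Spn} of $Sp(n)$ at the identity and then translating the resulting linear equations into each of the three given forms. Since the tensor $\{X_{\alpha\bar\beta}\}$ is specified only via its mixed indices, the induced complex endomorphism $X^{\sigma}_{\alpha}=g^{\sigma\bar\beta}X_{\alpha\bar\beta}$ of $\mathbb{C}^{4n}$ automatically preserves the splitting $\mathcal{W}\oplus\br{\mathcal{W}}$ and hence commutes with $J_1$; membership in $sp(n)$ therefore reduces to skew-symmetry with respect to $\lc,\rc$ combined with preservation of $\pi$ (which, together with the $J_1$-commutativity, forces commutativity with $J_2$ and $J_3$ as well).

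Substituting $U=\mathrm{id}+\epsilon X$ into \eqref{def-Spn} and keeping the first-order terms in $\epsilon$ produces the two linear equations
\begin{equation*}
g_{\sigma\bar\beta}X^{\sigma}_{\alpha}+g_{\alpha\bar\tau}X^{\bar\tau}_{\bar\beta}=0,\qquad \pi_{\sigma\beta}X^{\sigma}_{\alpha}+\pi_{\alpha\sigma}X^{\sigma}_{\beta}=0.
\end{equation*}
After lowering with $g$, the first is exactly $X_{\alpha\bar\beta}+X_{\bar\beta\alpha}=0$, which is the skew-symmetry condition appearing in (2). Setting $Y_{\alpha\beta}:=X^{\sigma}_{\alpha}\pi_{\sigma\beta}$ and invoking the skew-symmetry of $\pi$ reduces the second equation to the symmetry $Y_{\alpha\beta}=Y_{\beta\alpha}$, while the identity $\pi_{\sigma\beta}\pi^{\beta\gamma}=-\delta^{\gamma}_{\sigma}$ (a consequence of the paper's $\pi^{\alpha}_{\bar\sigma}\pi^{\bar\sigma}_{\beta}=-\delta^{\alpha}_{\beta}$) inverts this relation to $X^{\alpha}_{\beta}=\pi^{\alpha\sigma}Y_{\sigma\beta}$. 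Thus (1) already delivers everything in (2) and (3) except the two $\mathfrak{j}$-invariance statements.

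To close the argument, I would verify the two $\mathfrak{j}$-invariance statements by direct computation. For (3), I substitute $X^{\alpha}_{\beta}=\pi^{\alpha\sigma}Y_{\sigma\beta}$ into the skew condition $X_{\alpha\bar\beta}=-X_{\bar\beta\alpha}$ (with $X_{\bar\beta\alpha}=\overline{X_{\alpha\bar\beta}}$ coming from the paper's default bar-flip conjugation convention) and eliminate the explicit $\pi$'s by applying the key identity; the skew-symmetry of $X$ then rearranges into the assertion $(\mathfrak{j}Y)_{\alpha\beta}=Y_{\alpha\beta}$. For (2), I compute $(\mathfrak{j}X)_{\alpha\bar\beta}=\pi^{\bar\sigma}_{\alpha}\pi^{\tau}_{\bar\beta}X_{\bar\sigma\tau}$, use the skew condition in the form $X_{\bar\sigma\tau}=-X_{\tau\bar\sigma}=-g_{\bar\sigma\rho}X^{\rho}_{\tau}$, contract $\pi^{\bar\sigma}_{\alpha}g_{\bar\sigma\rho}=\pi_{\rho\alpha}$, and check that $\mathfrak{j}X=X$ becomes algebraically equivalent to the linearized $\pi$-preservation equation. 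The reverse implications are obtained by retracing these steps.

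The main obstacle is the careful bookkeeping with the complex conjugation that is implicit in bar-flipped index patterns, together with the sign produced by $\pi^{\alpha}_{\bar\sigma}\pi^{\bar\sigma}_{\beta}=-\delta^{\alpha}_{\beta}$. One must keep in mind that, despite their parallel appearance, the two $\mathfrak{j}$-invariance conditions encode different halves of the identification $sp(n)\cong u(2n)\cap sp(2n,\mathbb{C})$: $(\mathfrak{j}X)_{\alpha\bar\beta}=X_{\alpha\bar\beta}$ encodes the symplectic ($sp(2n,\mathbb{C})$) part, while $(\mathfrak{j}Y)_{\alpha\beta}=Y_{\alpha\beta}$ encodes the skew-Hermitian ($u(2n)$) part.
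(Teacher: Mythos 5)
Your proposal is correct and follows essentially the same route as the paper's (very terse) proof: linearize \eqref{def-Spn} at the identity to obtain the two conditions in (2), and pass to (3) by contracting $X$ with $\pi$ (the paper simply sets $Y_{\sigma\beta}=-\pi_{\sigma\tau}X^{\tau}_{\beta}$). The only quibble is a sign in your bookkeeping: with the paper's convention that the lower index comes first, $\pi^{\bar\sigma}_{\alpha}g_{\bar\sigma\rho}=\pi_{\alpha\rho}=-\pi_{\rho\alpha}$, but this does not affect the structure of the argument.
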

 \begin{proof}
The equivalence between (1) and (2) follows by differentiating \eqref{def-Spn} at the identity.   
To obtain (3), we define the tensor $\{Y_{\sigma\beta}\}$ by $Y_{\sigma\beta}=-\pi_{\sigma\tau}X^{\tau}_{\beta}=-\pi_{\sigma}^{\bar\tau}X_{\beta\bar\tau}.$
\end{proof}

\subsection{The canonical Cartan connection and its structure equations}\label{Can_Cartan}
It is well known that to each qc manifold $(M,H)$ one can associate a
unique, up to a diffeomorphism, regular, normal Cartan geometry, i.e., a
certain principle bundle $P_1\rightarrow M$ endowed with a Cartan
connection that satisfies some natural normalization conditions.  In
\cite{MS} we have provided an explicit construction for both the bundle and
the Cartan connection in terms of geometric data generated entirely by the
qc structure of $M$.  Here we will briefly recall this construction since it
is important for the rest of the paper.  The method we are using is
essentially the original Cartan's method of equivalence that was
applied with a great success by Chern and Moser in \cite{ChM} for solving
the respective equivalence problem in the CR case.  It is based entirely on
classical exterior calculus and does not require any preliminary knowledge
concerning the theory of parabolic geometries or the related Lie algebra
cohomology.

By definition, if $(M,H)$ is a qc manifold, around each point of $M$, we can find $\eta_s,I_s$ and $g$ satisfying \eqref{def-hat-eta}. Moreover, if $\tilde\eta_1,\tilde\eta_2,\tilde\eta_3$ are any (other) 1-forms satisfying  \eqref{def-hat-eta} for some  symmetric and positive definite $\tilde g\in H^*\otimes H^*$ and endomorphisms $\tilde I_s \in End(H)$ in place of $g$ and $I_s$ respectively, then it is known (see for example the appendix of \cite{IMV5}) that there exists a positive real-valued function $\mu$ and an $SO(3)$-valued function $\Psi=(a_{st})_{3\times 3}$ so that
\begin{equation*}
\tilde \eta_s=\mu\, a_{ts}\, \eta_t,\qquad \tilde g=\mu \,  g,\qquad \tilde I_s=a_{ts}\,  I_t.
\end{equation*}
Therefore, there exists a natural principle bundle $\pi_o: P_o\rightarrow M$ with structure group $CSO(3)=\R^{+}\times SO(3)$ whose local sections are precisely the triples of
1-forms $(\eta_1,\eta_2,\eta_3)$ satisfying \eqref{def-hat-eta}. Clearly, on $P_o$ we obtain a global triple of canonical one-forms which we will denote again by $(\eta_1,\eta_2,\eta_3)$. The equations \eqref{def-hat-eta} yield (\cite{MS}, Lemma~3.1) the following expressions for the exterior derivatives of the canonical one-forms (using the conventions from Section~\ref{prelim})
\begin{equation}\label{first_str_eq}
\begin{cases}
d\eta_1=-\varphi_0\wedge\eta_1-\varphi_2\wedge\eta_3+\varphi_3\wedge\eta_2+2i g_{\alpha\bar\beta}\,\theta^{\alpha}\wedge\theta^{\bar\beta}\\
d\eta_2=-\varphi_0\wedge\eta_2-\varphi_3\wedge\eta_1+\varphi_1\wedge\eta_3+\pi_{\alpha\beta}\,\theta^{\alpha}\wedge\theta^{\beta}+\pi_{\bar\alpha\bar\beta}\,\theta^{\bar\alpha}\wedge\theta^{\bar\beta}\\
d\eta_3=-\varphi_0\wedge\eta_3-\varphi_1\wedge\eta_2 + \varphi_2\wedge\eta_1-i\pi_{\alpha\beta}\,\theta^{\alpha}\wedge\theta^{\beta}+i\pi_{\bar\alpha\bar\beta}\,\theta^{\bar\alpha}\wedge\theta^{\bar\beta},
\end{cases}
\end{equation}
where $\varphi_0,\varphi_1,\varphi_2,\varphi_3$ are some (local, non-unique) real one-forms on $P_o$, $\theta^\alpha$ are some (local, non-unique) complex and semibasic one-forms on $P_o$  (by semibasic we mean that the contraction of the forms with any vector field tangent to the fibers of $\pi_o$ vanishes), $g_{\alpha\bar\beta}=g_{\bar\beta\alpha}$ and $\pi_{\alpha\beta}=-\pi_{\beta\alpha}$ are the same (fixed) constants as in Section~\ref{prelim}. 

One can show (cf. \cite{MS}, Lemma 3.2) that, if $\tilde\varphi_0,\tilde\varphi_1, \tilde\varphi_2, \tilde\varphi_3,\tilde\theta^\alpha$ are any other one-forms (with the same properties as $\varphi_0,\varphi_1,\varphi_2,\varphi_3,\theta^\alpha$) that satisfy \eqref{first_str_eq}, then
\begin{equation}\label{str_eq_P_o}
\begin{cases}
\tilde\theta^{\alpha}=U^{\alpha}_{\beta}\theta^\beta + ir^\alpha\eta_1+\pi^{\alpha}_{\bar\sigma}r^{\bar\sigma}(\eta_2+i\eta_3)\\
\tilde\varphi_0=\varphi_0+2U_{\beta\bar\sigma}r^{\bar\sigma}\theta^{\beta}+2U_{\bar\beta\sigma}r^{\sigma}\theta^{\bar\beta}+\lambda_1\eta_1+\lambda_2\eta_2+\lambda_3\eta_3\\
\tilde\varphi_1=\varphi_1-2iU_{\beta\bar\sigma}r^{\bar\sigma}\theta^{\beta}+2iU_{\bar\beta\sigma}r^{\sigma}\theta^{\bar\beta}+2r_{\sigma}r^{\sigma}\eta_1-\lambda_3\eta_2+\lambda_2\eta_3,\\
\tilde\varphi_2=\varphi_2-2\pi_{\sigma\tau}U^{\sigma}_{\beta}r^{\tau}\theta^{\beta}-2\pi_{\bar\sigma\bar\tau}U^{\bar\sigma}_{\bar\beta}r^{\bar\tau}\theta^{\bar\beta}+\lambda_3\eta_1+2r_{\sigma}r^{\sigma}\eta_2-\lambda_1\eta_3,\\
\tilde\varphi_3=\varphi_3+2i\pi_{\sigma\tau}U^{\sigma}_{\beta}r^{\tau}\theta^{\beta}-2i\pi_{\bar\sigma\bar\tau}U^{\bar\sigma}_{\bar\beta}r^{\bar\tau}\theta^{\bar\beta}-\lambda_2\eta_1+\lambda_1\eta_2+2r_{\sigma}r^{\sigma}\eta_3,
\end{cases}
\end{equation}
where $U^{\alpha}_{\beta}, r^\alpha, \lambda_s$ are some appropriate functions; $\lambda_1,\lambda_2,\lambda_3$ are real, and $\{U^{\alpha}_{\beta}\}$ satisfy \eqref{def-Spn}, i.e., $\{U^{\alpha}_{\beta}\}\in Sp(n)\subset End(\R^{4n})$. Clearly, the functions $U^{\alpha}_{\beta}, r^\alpha$ and $\lambda_s$ give a parametrization of a certain Lie Group $G_1$ diffeomorphic to $Sp(n)\times \mathbb R^{4n+3}$. There exists a canonical principle bundle $\pi_1: P_1\rightarrow P_o$ whose local sections are precisely the local one-forms $\varphi_0,\varphi_1,\varphi_2,\varphi_3,\theta^\alpha$ on $P_o$ satisfying \eqref{first_str_eq}. 

We use $\varphi_0,\varphi_1,\varphi_2,\varphi_3,\theta^\alpha$ to denote also the induced canonical (global) one-forms on the principal bundle $P_1$. Then, according to \cite{MS}, Theorem~3.3, on $P_1$, there exists a unique set of complex one-forms $\Gamma_{\alpha\beta}, \phi^\alpha$ and real one-forms $\psi_1,\psi_2,\psi_3$  so that 
\begin{equation}\label{Gamma-symmetries}
\Gamma_{\alpha\beta}=\Gamma_{\beta\alpha},\qquad (\mathfrak j\Gamma)_{\alpha\beta}=\Gamma_{\alpha\beta}.
\end{equation}
and the equations
\begin{equation}\label{str-eq-con}
\begin{cases}
d\theta^\alpha=-i\phi^\alpha\wedge\eta_1-\pi^\alpha_{\bar\sigma}\phi^{\bar\sigma}\wedge(\eta_2+i\eta_3)-\pi^{\alpha\sigma}\Gamma_{\sigma\beta}\wedge\theta^\beta-\frac{1}{2}(\varphi_0+i\varphi_1)\wedge\theta^\alpha-\frac{1}{2}\pi^\alpha_{\bar\beta}(\varphi_2+i\varphi_3)\wedge\theta^{\bar\beta}\\
d\varphi_0=-\psi_1\wedge\eta_1-\psi_2\wedge\eta_2-\psi_3\wedge\eta_3-2\phi_\beta\wedge\theta^\beta-2\phi_{\bar\beta}\wedge\theta^{\bar\beta}\\
d\varphi_1=-\varphi_2\wedge\varphi_3-\psi_2\wedge\eta_3+\psi_3\wedge\eta_2+2i\phi_\beta\wedge\theta^\beta-2i\phi_{\bar\beta}\wedge\theta^{\bar\beta}\\
d\varphi_2=-\varphi_3\wedge\varphi_1-\psi_3\wedge\eta_1+\psi_1\wedge\eta_3-2\pi_{\sigma_\beta}\phi^\sigma\wedge\theta^\beta-2\pi_{\bar\sigma\bar\beta}\phi^{\bar\sigma}\wedge\theta^{\bar\beta}\\
d\varphi_3=-\varphi_1\wedge\varphi_2-\psi_1\wedge\eta_2+\psi_2\wedge\eta_1+2i\pi_{\sigma_\beta}\phi^\sigma\wedge\theta^\beta-2i\pi_{\bar\sigma\bar\beta}\phi^{\bar\sigma}\wedge\theta^{\bar\beta},
\end{cases}
\end{equation}
are satisfied.
The so obtained one-forms $\{\eta_s\}$, $\{\theta^\alpha\}$, $\{ \varphi_0\}$, $\{\varphi_s\}$, $\{\Gamma_{\alpha\beta}\}$, $\{\phi^\alpha\}$, $\{\psi_s\}$ represent the components of the canonical Cartan connection (cf. \cite{MS}, Section~5) corresponding to a fixed splitting of the relevant Lie algebra 
\begin{equation*}
sp(n+1,1)=\mathfrak g_{-2}\oplus \mathfrak g_{-1}\oplus \underbrace{\mathbb R\oplus sp(1)\oplus sp(n)}_{\mathfrak g_0}\oplus \mathfrak g_{1} \oplus \mathfrak g_{2}.
\end{equation*} 

The curvature of the Cartan connection may be represented (cf. \cite{MS}, Proposition 4.1) by a set of globally defined complex-valued functions 
 \begin{equation}\label{CurvatureFunctions}
 \mathcal S_{\alpha\beta\gamma\delta},\  \mathcal V_{\alpha\beta\gamma},\ \mathcal  L_{\alpha\beta},\ \mathcal  M_{\alpha\beta}, \ \mathcal C_\alpha, \  \mathcal H_\alpha,\  \mathcal P, \  \mathcal Q,\  \mathcal R
 \end{equation}
 satisfying:
 
 {\bf (I)} Each of the arrays  $\{\mathcal S_{\alpha\beta\gamma\delta}\},\{ \mathcal V_{\alpha\beta\gamma}\},\{ \mathcal L_{\alpha\beta}\}, \{ \mathcal M_{\alpha\beta}\}$ is totally symmetric in its indices.
 
 {\bf (II)} We have
 \begin{equation}\label{properties-curvature}
 \begin{cases}
 (\mathfrak j{\mathcal S})_{\alpha\beta\gamma\delta}=\mathcal S_{\alpha\beta\gamma\delta}\\
 (\mathfrak j{\mathcal L})_{\alpha\beta}=\mathcal L_{\alpha\beta}\\
 \overline {\mathcal R}=\mathcal R.
 \end{cases}
 \end{equation}
 
{\bf  (III)} The exterior derivatives of the connection one-forms $\Gamma_{\alpha\beta}$, $\phi_\alpha$ and $\psi_s$  are given by  
 \begin{equation}\label{dGamma_ab}
 \begin{split}
 d\Gamma_{\alpha\beta}\ =\ &-\pi^{\sigma\tau}\Gamma_{\alpha\sigma}\wedge\Gamma_{\tau\beta} + 2\pi^{\bar\sigma}_{\alpha}(\phi_\beta\wedge\theta_{\bar\sigma}-\phi_{\bar\sigma}\wedge\theta_\beta)+2\pi^{\bar\sigma}_{\beta}(\phi_\alpha\wedge\theta_{\bar\sigma}-\phi_{\bar\sigma}\wedge\theta_\alpha)\\
 &+\pi^{\sigma}_{\bar\delta}\,\mathcal S_{\alpha\beta\gamma\sigma}\,\theta^{\gamma}\wedge\theta^{\bar\delta}+\Big(\mathcal V_{\alpha\beta\gamma}\,\theta^\gamma
 +\pi^{\bar\sigma}_{\alpha}\,\pi^{\bar\tau}_{\beta}\,\mathcal V_{\bar\sigma\bar\tau\bar\gamma}\,\theta^{\bar\gamma}\Big)\wedge\eta_1\\
 &-i\pi^{\sigma}_{\bar\gamma}\,\mathcal V_{\alpha\beta\sigma}\,\theta^{\bar\gamma}\wedge(\eta_2+i\eta_3)+i(\mathfrak j\mathcal V)_{\alpha\beta\gamma}\,\theta^{\gamma}\wedge(\eta_2-i\eta_3)\\
 &-i\mathcal L_{\alpha\beta}\,(\eta_2+i\eta_3)\wedge(\eta_2-i\eta_3)+\mathcal M_{\alpha\beta}\,\eta_1\wedge(\eta_2+i\eta_3)+(\mathfrak j M)_{\alpha\beta}\,\eta_1\wedge(\eta_2-i\eta_3),
  \end{split}
 \end{equation}
 
 \begin{equation}\label{dphi_a}
 \begin{split}
 d\phi_{\alpha}\ =\ \  \ &\frac{1}{2}(\varphi_0+i\varphi_1)\wedge\phi_\alpha 
 +\frac{1}{2}\pi_{\alpha\gamma}(\varphi_2-i\varphi_3)\wedge\phi^{\gamma}
 -\pi^{\bar\sigma}_{\alpha}\, \Gamma_{\bar\sigma\bar\gamma}\wedge\phi^{\bar\gamma}
 -\frac{i}{2}\,\psi_1\wedge\theta_\alpha\\
  -\ &\frac{1}{2}\,\pi_{\alpha\gamma}(\psi_2-i\psi_3)\wedge\theta^{\gamma}
 -i\pi_{\bar\delta}^{\sigma}\,\mathcal V_{\alpha\gamma\sigma}\,\theta^{\gamma}\wedge\theta^{\bar\delta}+\mathcal M_{\alpha\gamma}\,\theta^\gamma\wedge\eta_1+\pi^{\bar\sigma}_{\alpha}\,\mathcal L_{\bar\sigma\bar\gamma}\,\theta^{\bar\gamma}\wedge\eta_1\\
 +\ &i\mathcal L_{\alpha\gamma}\,\theta^{\gamma}\wedge(\eta_2-i\eta_3)-i\pi^{\sigma}_{\bar\gamma}\,\mathcal M_{\alpha\sigma}\,\theta^{\bar\gamma}\wedge(\eta_2+i\eta_3)
 -\mathcal C_{\alpha}(\eta_2+i\eta_3)\wedge(\eta_2-i\eta_3)\\
 +\ &\mathcal H_\alpha\,\eta_1\wedge(\eta_2+i\eta_3)+i\pi_{\alpha\sigma}\,\mathcal C^\sigma\,\eta_1\wedge(\eta_2-i\eta_3),
  \end{split}
 \end{equation}
 
 \begin{equation}\label{dpsi_1}
 \begin{split}
 d\psi_1\ =\ \  \ &\varphi_0\wedge\psi_1-\varphi_2\wedge\psi_3+\varphi_3\wedge\psi_2-4i \phi_\gamma\wedge\phi^\gamma+4\pi^{\sigma}_{\bar\delta}\,\mathcal L_{\gamma\sigma}\,\theta^\gamma\wedge\theta^{\bar\delta}+4\mathcal C_\gamma\,\theta^\gamma\wedge\eta_1\\
 +\ &4\mathcal C_{\bar\gamma}\,\theta^{\bar\gamma}\wedge\eta_1-4i\pi_{\bar\gamma\bar\sigma}\, \mathcal C^{\bar\sigma}\,\theta^{\bar\gamma}\wedge(\eta_2+i\eta_3)
 +4i\pi_{\gamma\sigma}\, \mathcal C^{\sigma}\,\theta^{\gamma}\wedge(\eta_2-i\eta_3)\\
 +\ &\mathcal P\,\eta_1\wedge(\eta_2+i\eta_3)+\overline{\mathcal P}\,\eta_1\wedge(\eta_2-i\eta_3)+i\mathcal R\,(\eta_2+i\eta_3)\wedge(\eta_2-i\eta_3),
 \end{split}
 \end{equation}

\begin{equation}\label{dpsi_23}
 \begin{split}
 d\psi_2+i\,d\psi_3\ =\ \  \ &(\varphi_0-i\varphi_1)\wedge(\psi_2+i\psi_3)+i(\varphi_2+i\varphi_3)\wedge\psi_1+4\pi_{\gamma\delta}\phi^\gamma\wedge\phi^\delta+4i\pi^{\bar\sigma}_{\gamma}\,\mathcal M_{\bar\sigma\bar\delta}\,\theta^\gamma\wedge\theta^{\bar\delta}\\
 +&\ 4i \pi^{\bar\sigma}_{\gamma}\,\mathcal C_{\bar\sigma}\,\theta^\gamma\wedge\eta_1
 -4\mathcal H_{\bar\gamma}\,\theta^{\bar\gamma}\wedge\eta_1-4i\mathcal C_{\bar\gamma}\,\theta^{\bar\gamma}\wedge(\eta_2+i\eta_3)
 -4i\pi_{\gamma}^{\bar\sigma}\, \mathcal H_{\bar\sigma}\,\theta^{\gamma}\wedge(\eta_2-i\eta_3)\\
 -\ &i\mathcal R\,\eta_1\wedge(\eta_2+i\eta_3)+\overline{\mathcal Q}\,\eta_1\wedge(\eta_2-i\eta_3)-\overline{\mathcal P}\,(\eta_2+i\eta_3)\wedge(\eta_2-i\eta_3).
 \end{split}
 \end{equation}

\subsection{The qc structures as integral manifolds of an exterior
differential system}\label{qc_as_EDS} As we have seen above, each qc structure $(M,H)$
determines a principle bundle $P_1$ over $M$ with a coframing
\begin{equation}\label{coframe}
\eta_s,\ \theta^\alpha,\ \varphi_0,\ \varphi_s,\ \Gamma_{\alpha\beta},\ \phi^\alpha,\ \psi_s
\end{equation} 
satisfying  
\eqref{Gamma-symmetries}, \eqref{first_str_eq}, \eqref{str-eq-con}, together with a set of functions 
\begin{equation}\label{functions_str}
 \mathcal S_{\alpha\beta\gamma\delta},\  \mathcal V_{\alpha\beta\gamma},\ \mathcal  L_{\alpha\beta},\ \mathcal  M_{\alpha\beta}, \ \mathcal C_\alpha, \  \mathcal H_\alpha,\  \mathcal P, \  \mathcal Q,\  \mathcal R
 \end{equation}
 with the respective properties (I), (II) and (III) of Section~\ref{Can_Cartan}. As it can be easily shown, the converse is also true, i.e., each manifold $P_1$ endowed with a coframing  \eqref{coframe} and function \eqref{functions_str}, satisfying all the respective properties, can be viewed, locally (in a unique way), as the canonical principle bundle of a (unique) qc structure. Therefore, finding local qc structures is equivalent to finding linearly independent one-forms  \eqref{coframe} and functions \eqref{functions_str} on an open domain in $\mathbb R^{\dim(P_1)}$ satisfying the above properties.
This is, clearly, a problem of type \eqref{str_eq_d_omega} and thus it reduces---as explained in Section~\ref{ex_dif_sys}---to a typical problem from the theory of exterior differential systems that can be handled using the Cartan's Third Theorem. 

 For the respective exterior differential system, the validity of Assumption I, Section~\ref{ex_dif_sys} follows immediately from \cite{MS}, Propositions~4.2 which says that the exterior differentiation of \eqref{dGamma_ab}, \eqref{dphi_a}, \eqref{dpsi_1} and \eqref{dpsi_23} produces equations that can be put into the form: 

\begin{multline}\label{d2Gamma_ab}
 \Big(d^2\Gamma_{\alpha\beta}\  = \Big)\ \pi^{\sigma}_{\bar\delta}{\mathcal S}^\ast_{\alpha\beta\gamma\sigma}\wedge\theta^\gamma\wedge\theta^{\bar\delta}
+\ {\mathcal V}^\ast_{\alpha\beta\gamma}\wedge\theta^\gamma\wedge\eta_1 
 +\ \pi^{\bar\mu}_\alpha\,\pi^{\bar\nu}_\beta\,{\mathcal V}^\ast_{\bar\mu\bar\nu\bar\gamma}\wedge\theta^{\bar\gamma}\wedge\eta_1 \\
 -\ i\pi^{\sigma}_{\bar\gamma}\,{\mathcal V}^\ast_{\alpha\beta\sigma}\wedge\theta^{\bar\gamma}\wedge\big(\eta_2+i\eta_3\big) 
+\ i\pi^{\bar\mu}_\alpha\,\pi^{\bar\nu}_\beta\,\pi^{\bar\xi}_\gamma\,{\mathcal V}^\ast_{\bar\mu\bar\nu\bar\xi}\wedge\theta^{\gamma}\wedge\big(\eta_2-i\eta_3\big) \\
-i\mathcal L_{\alpha\beta}^\ast\wedge\big(\eta_2+i\eta_3\big)\wedge\big(\eta_2-i\eta_3\big) 
+\mathcal M^\ast_{\alpha\beta}\wedge\eta_1\wedge\big(\eta_2+i\eta_3\big) \\
+\pi^{\bar\mu}_\alpha\,\pi^{\bar\nu}_\beta {\mathcal M}^\ast_{\bar\mu\bar\nu}\wedge\eta_1\wedge\big(\eta_2-i\eta_3\big) \ =\ 0;
 \end{multline} 
 \begin{multline}\label{d2phi_a}
 \Big(d^2\phi_\alpha\  =\Big) \  
-\ i\pi^{\nu}_{\bar\gamma}\,{\mathcal V}^\ast_{\alpha\beta\nu}\wedge\theta^{\beta}\wedge\theta^{\bar\gamma} 
+\pi^{\bar\mu}_\alpha\,\mathcal L_{\bar\mu\bar\beta}^\ast\wedge\theta^{\bar\beta}\wedge\eta_1
 +\mathcal M^\ast_{\alpha\beta}\wedge\theta^{\beta}\wedge\eta_1\\
-i\pi^{\nu}_{\bar\beta}\,\mathcal M^\ast_{\alpha\nu}\wedge\theta^{\bar\beta}\wedge\big(\eta_2+i\eta_3\big) 
+i\mathcal L^\ast_{\alpha\beta}\wedge\theta^{\beta}\wedge\big(\eta_2-i\eta_3\big) 
-\mathcal C^\ast_\alpha
 \wedge\big(\eta_2+i\eta_3\big)\wedge\big(\eta_2-i\eta_3\big) \\
+i\pi_{\alpha}^{\bar\mu}\,\mathcal C^\ast_{\bar\mu}\wedge\eta_1\wedge\big(\eta_2-i\eta_3\big) 
 +\mathcal H^\ast_\alpha
 \wedge\eta_1\wedge\big(\eta_2+i\eta_3\big) \ =\ 0;
 \end{multline}
 \begin{multline}\label{d2psi_1}
 \Big(d^2\psi_1\  =\Big) \ 4\pi^\mu_{\bar\gamma}\, \mathcal L^\ast_{\beta\mu}\wedge\theta^{\beta}\wedge\theta^{\bar\gamma}+4\mathcal C^\ast_\beta\wedge\theta^\beta\wedge\eta_1 
 +4\mathcal C^\ast_{\bar\gamma}\wedge\theta^{\bar\gamma}\wedge\eta_1
 +4i\pi_{\beta}^{\bar\mu}\,\mathcal C^\ast_{\bar\mu}\wedge\theta^{\beta}\wedge\big(\eta_2-i\eta_3\big)\\ 
-4i\pi^{\mu}_{\bar\gamma}\,\mathcal C^\ast_{\mu}\wedge\theta^{\bar\gamma}\wedge\big(\eta_2+i\eta_3\big) 
+ \mathcal P^\ast\wedge\eta_1\wedge\big(\eta_2+i\eta_3\big) + \overline{\mathcal P^\ast}\wedge\eta_1\wedge\big(\eta_2-i\eta_3\big)\\
+i\mathcal R^\ast\wedge\big(\eta_2+i\eta_3\big) \wedge\big(\eta_2-i\eta_3\big) 
\ =\ 0;
 \end{multline}
  \begin{multline}\label{d2psi_2}
 \Big(d^2\big(\psi_2+i\psi_3\big)\  =\Big) \ 4i\pi^{\bar\mu}_{\beta}\, \mathcal M^\ast_{\bar\mu\bar\gamma}\wedge\theta^{\beta}\wedge\theta^{\bar\gamma}
 +4i\pi^{\bar\mu}_{\beta}\,\mathcal C^\ast_{\bar\mu}\wedge\theta^{\beta}\wedge\eta_1
  -4\mathcal H^\ast_{\bar\gamma}\wedge\theta^{\bar\gamma}\wedge\eta_1
  -4\mathcal C^\ast_{\bar\gamma}\wedge\theta^{\bar\gamma}\wedge\big(\eta_2+i\eta_3\big)\\
 -4i\pi^{\bar\mu}_{\beta}\,\mathcal H^\ast_{\bar\mu}\wedge\theta^{\beta}\wedge\big(\eta_2-i\eta_3\big)
 -i\mathcal R^\ast\wedge\eta_1\wedge\big(\eta_2+i\eta_3\big) + \overline{\mathcal Q^\ast}\wedge\eta_1\wedge\big(\eta_2-i\eta_3\big)
 \\ 
-\overline{\mathcal P^\ast}\wedge\big(\eta_2+i\eta_3\big) \wedge\big(\eta_2-i\eta_3\big) 
\ =\ 0.
 \end{multline}
Where
\begin{equation}\label{curvature_one_forms}
 \mathcal S^\ast_{\alpha\beta\gamma\delta},\  \mathcal V^\ast_{\alpha\beta\gamma},\ \mathcal  L^\ast_{\alpha\beta},\ \mathcal  M^\ast_{\alpha\beta}, \ \mathcal C^\ast_\alpha, \  \mathcal H^\ast_\alpha,\  \mathcal P^\ast, \  \mathcal Q^\ast,\  \mathcal R^\ast
 \end{equation}
are certain (new) one-forms on $P_1$ each one of which begins with the differential of the corresponding curvature component followed by certain corrections terms. More precisely, we have

\begin{multline}\label{S*}
\mathcal S^\ast_{\alpha\beta\gamma\delta}\overset{def}{\ =\ }d{\mathcal S}_{\alpha\beta\gamma\delta}-\pi^{\tau\nu}\Gamma_{\nu\alpha}\,\mathcal S_{\tau\beta\gamma\delta}
 -\pi^{\tau\nu}\Gamma_{\nu\beta}\,\mathcal S_{\alpha\tau\gamma\delta}-\pi^{\tau\nu}\Gamma_{\nu\gamma}\,\mathcal S_{\alpha\beta\tau\delta}
 -\pi^{\tau\nu}\Gamma_{\nu\delta}\,\mathcal S_{\alpha\beta\gamma\tau}\\
 -\varphi_0\, \mathcal S_{\alpha\beta\gamma\delta}
 -2i\big(\pi_{\alpha\tau}\,\mathcal V_{\delta\beta\gamma}+\pi_{\beta\tau}\,\mathcal V_{\alpha\gamma\delta}+\pi_{\gamma\tau}\,\mathcal V_{\alpha\beta\delta}
 +\pi_{\delta\tau}\,\mathcal V_{\alpha\beta\gamma}\big)\theta^{\tau}
 -2i\Big(g_{\alpha\bar\tau}\,(\mathfrak j\mathcal V)_{\delta\beta\gamma}\\
 +g_{\beta\bar\tau}\,(\mathfrak j\mathcal V)_{\alpha\delta\gamma}+g_{\gamma\bar\tau}\,(\mathfrak j\mathcal V)_{\alpha\beta\delta}
 +g_{\delta\bar\tau}\,(\mathfrak j\mathcal V)_{\alpha\beta\gamma}\Big)\theta^{\bar\tau}
\end{multline} 
\begin{multline}\label{V*}
 \mathcal V^\ast_{\alpha\beta\gamma}\overset{def}{\ =\ }d{\mathcal V}_{\alpha\beta\gamma}-\pi^{\tau\nu}\Gamma_{\nu\alpha}\,\mathcal V_{\tau\beta\gamma}
 -\pi^{\tau\nu}\Gamma_{\nu\beta}\,\mathcal V_{\alpha\tau\gamma}-\pi^{\tau\nu}\Gamma_{\nu\gamma}\,\mathcal V_{\alpha\beta\tau}
 +i\pi^{\sigma}_{\bar\tau}\,\phi^{\bar\tau}\,\mathcal S_{\alpha\beta\gamma\sigma}\\-\frac{1}{2}\big(3\varphi_0+i\varphi_1\big)\, \mathcal V_{\alpha\beta\gamma}
 +\frac{1}{2}\big(\varphi_2-i\varphi_3\big)\, (\mathfrak j\mathcal V)_{\alpha\beta\gamma}
 +2\big(\pi_{\alpha\tau}\,\mathcal M_{\beta\gamma}+\pi_{\beta\tau}\,\mathcal M_{\alpha\gamma}+\pi_{\gamma\tau}\,\mathcal M_{\alpha\beta}\big)\theta^{\tau}\\
 +2\big(g_{\alpha\bar\tau}\, \mathcal L_{\beta\gamma}
 +g_{\beta\bar\tau}\,\mathcal L_{\alpha\gamma}+g_{\gamma\bar\tau}\,\mathcal L_{\alpha\beta}\big)\theta^{\bar\tau}
 \end{multline}
\begin{multline}\label{L*}
 \mathcal L^\ast_{\alpha\beta}\overset{def}{\ =\ }d\mathcal L_{\alpha\beta}-\pi^{\tau\sigma}\Gamma_{\sigma\alpha}\,\mathcal L_{\tau\beta}-\pi^{\tau\sigma}\Gamma_{\sigma\beta}\,\mathcal L_{\alpha\tau}
-2\varphi_0\,\mathcal L_{\alpha\beta}
-\frac{1}{2}\big(\varphi_2+i\varphi_3\big)\mathcal M_{\alpha\beta}
 -\frac{1}{2}\big(\varphi_2-i\varphi_3\big)(\mathfrak j\mathcal M)_{\alpha\beta}\\
  -\phi^\sigma\,\mathcal V_{\alpha\beta\sigma}-\pi^{\bar\mu}_\alpha\,\pi^{\bar\nu}_\beta\,\phi^{\bar\sigma}\,\mathcal V_{\bar\mu\bar\nu\bar\sigma}
  -2i\big(\pi_{\alpha\tau}\mathcal C_\beta+\pi_{\beta\tau}\mathcal C_\alpha\big)\theta^\tau
 -2i\big(g_{\alpha\bar\tau}\,\pi^{\bar\sigma}_{\beta}\,\mathcal C_{\bar\sigma}+g_{\beta\bar\tau}\,\pi^{\bar\sigma}_{\alpha}\,\mathcal C_{\bar\sigma}\big)\theta^{\bar\tau}
 \end{multline} 
\begin{multline}\label{M*}
 \mathcal M^\ast_{\alpha\beta}\overset{def}{\ =\ }d\mathcal M_{\alpha\beta}-\pi^{\tau\sigma}\Gamma_{\sigma\alpha}\,\mathcal M_{\tau\beta}-\pi^{\tau\sigma}\Gamma_{\sigma\beta}\,\mathcal M_{\alpha\tau}
 -\big(2\varphi_0+i\varphi_1\big)\,\mathcal M_{\alpha\beta}+\big(\varphi_2-i\varphi_3\big)\mathcal L_{\alpha\beta}
 +2\pi^{\sigma}_{\bar\tau}\,\phi^{\bar\tau}\,\mathcal V_{\alpha\beta\sigma}\\
 +2\big(\pi_{\alpha\tau}\mathcal H_\beta+\pi_{\beta\tau}\mathcal H_\alpha\big)\theta^\tau
 -2i\big(g_{\alpha\bar\tau}\,\mathcal C_{\beta}+g_{\beta\bar\tau}\,\mathcal C_{\alpha}\big)\theta^{\bar\tau}
 \end{multline} 
\begin{multline}\label{C*}
\mathcal C^\ast_{\alpha}\overset{def}{\ =\ }d\mathcal C_\alpha-\pi^{\tau\sigma}\Gamma_{\sigma\alpha}\,\mathcal C_\tau 
 -\frac{1}{2}\big(5\varphi_0+i\varphi_1\big)\,\mathcal C_\alpha+\pi^{\bar\sigma}_{\alpha}\big(\varphi_2-i\varphi_3\big)\mathcal C_{\bar\sigma}
+ 2i\pi^{\sigma}_{\bar\tau}\,\phi^{\bar\tau}\,\mathcal L_{\alpha\sigma} - i\phi^{\tau}\,\mathcal M_{\alpha\tau} \\
-\frac{i}{2} \big(\varphi_2+i\varphi_3\big)\mathcal H_{\alpha}+\frac{1}{2}\pi_{\alpha\tau}\,\mathcal P\,\theta^\tau-\frac{1}{2}g_{\alpha\bar\tau}\,\mathcal R\,\theta^{\bar\tau}
 \end{multline}
 \begin{multline}\label{H*}
\mathcal H^\ast_{\alpha}\overset{def}{\ =\ }d\mathcal H_\alpha-\pi^{\tau\sigma}\Gamma_{\sigma\alpha}\,\mathcal H_\tau 
 -\frac{1}{2}\big(5\varphi_0+3i\varphi_1\big)\,\mathcal H_\alpha-\frac{3i}{2}\big(\varphi_2-i\varphi_3\big)\mathcal C_{\alpha}
+3\pi^{\sigma}_{\bar\tau}\,\phi^{\bar\tau}\,\mathcal M_{\alpha\sigma}\\
-\frac{1}{2}\pi_{\alpha\tau}\,\mathcal Q\,\theta^\tau-\frac{i}{2}g_{\alpha\bar\tau}\,\mathcal P\,\theta^{\bar\tau}
 \end{multline}
\begin{equation}\label{R*}
\mathcal R^\ast\overset{def}{\ =\ }d\mathcal R  
 -3\varphi_0\,\mathcal R +\big(\varphi_2+i\varphi_3\big)\mathcal P+\big(\varphi_2-i\varphi_3\big)\overline{\mathcal P}+8\phi^\tau\,\mathcal C_\tau+8\phi^{\bar\tau}\,\mathcal C_{\bar\tau}
 \end{equation}
 \begin{equation}\label{P*}
\mathcal P^\ast\overset{def}{\ =\ }d\mathcal P  
 -\big(3\varphi_0+i\varphi_1\big)\mathcal P +\frac{i}{2}\big(\varphi_2+i\varphi_3\big){\mathcal Q}-\frac{3}{2}\big(\varphi_2-i\varphi_3\big){\mathcal R}-4i\phi^\tau\mathcal H_{\tau}+12\pi_{\bar\tau\bar\sigma}\phi^{\bar\tau}\,\mathcal C_{\bar\sigma}
 \end{equation}
  \begin{equation}\label{Q*}
\mathcal Q^\ast\overset{def}{\ =\ }d\mathcal Q  
 -\big(3\varphi_0+2i\varphi_1\big)\mathcal Q +2i\big(\varphi_2-i\varphi_3\big){\mathcal P}-16\pi_{\bar\tau\bar\sigma}\phi^{\bar\tau}\mathcal H^{\bar\sigma}
 \end{equation}

In order to show that the Assumption II, Section~\ref{ex_dif_sys} holds true for the differential system under consideration, we observe that the Bianchi identities \eqref{d2Gamma_ab}, \eqref{d2phi_a}, \eqref{d2psi_1} and \eqref{d2psi_2} imply that the one-forms \eqref{curvature_one_forms} belong to the linear span of $\eta_1$,  $\eta_2$, $\eta_3$, $\theta^\alpha$, $\theta^{\bar\alpha}$. Furthermore, if we are considering  the above Bianchi identities as a system of algebraic equations for the unknown one-forms  \eqref{curvature_one_forms}, then---since this system is clearly linear---the solutions may be parametrized by elements of a certain vector space. In \cite{MS}, Proposition~4.3 we have given an explicit description for this vector space. Namely, we have shown that, on $P_1$, there exist unique, globally defined, complex valued functions
 \begin{equation}\label{solution_space_coord}
 \begin{split}
 \mathcal A_{\alpha\beta\gamma\delta\epsilon},\ & \mathcal B_{\alpha\beta\gamma\delta},\  \mathcal C_{\alpha\beta\gamma\delta} ,\ \mathcal  D_{\alpha\beta\gamma},\ \mathcal  E_{\alpha\beta\gamma},
 \ \mathcal  F_{\alpha\beta\gamma},\ \mathcal  G_{\alpha\beta},\ \mathcal  X_{\alpha\beta},\ \mathcal  Y_{\alpha\beta},\ \mathcal  Z_{\alpha\beta}, \\
  &\ (\mathcal N_1)_\alpha,\ (\mathcal N_2)_\alpha,\ (\mathcal N_3)_\alpha,\ (\mathcal N_4)_\alpha,\ (\mathcal N_5)_\alpha,\ \mathcal U_s,\  \mathcal W_s
\end{split}
\end{equation}
 so that:
 
 {\bf (I)} Each of the arrays  $\{\mathcal A_{\alpha\beta\gamma\delta\epsilon}\}$, $\{ \mathcal B_{\alpha\beta\gamma\delta}\}$, $\{ \mathcal C_{\alpha\beta\gamma\delta}\}$, $\{ \mathcal D_{\alpha\beta\gamma}\}$, $\{ \mathcal E_{\alpha\beta\gamma}\}$, $\{ \mathcal F_{\alpha\beta\gamma}\}$, $\{ \mathcal G_{\alpha\beta\gamma}\}$, $\{ \mathcal X_{\alpha\beta}\}$, $\{ \mathcal Y_{\alpha\beta}\}$, $\{ \mathcal Z_{\alpha\beta}\}$ is totally symmetric in its indices.
 
{\bf  (II)} We have  
 \begin{equation}\label{FormulaeForCovDer}
 \begin{split}
\mathcal S^\ast_{\alpha\beta\gamma\delta}&\ =\  
\mathcal A_{\alpha\beta\gamma\delta\epsilon}\,\theta^{\epsilon}-\pi^{\sigma}_{\bar\epsilon}(\mathfrak j\mathcal A)_{\alpha\beta\gamma\delta\sigma}\,\theta^{\bar\epsilon}
 +\Big(\mathcal B_{\alpha\beta\gamma\delta}+(\mathfrak j \mathcal B)_{\alpha\beta\gamma\delta}\Big)\eta_1 + i\mathcal C_{\alpha\beta\gamma\delta}\big(\eta_2+i\eta_3\big)\\
&\qquad\qquad\qquad\qquad\qquad\qquad\qquad\qquad\qquad\qquad\qquad\qquad\qquad - i(\mathfrak j\mathcal C)_{\alpha\beta\gamma\delta}\big(\eta_2-i\eta_3\big)\\
\mathcal V^\ast_{\alpha\beta\gamma}&\ =\ \mathcal C_{\alpha\beta\gamma\epsilon}\,\theta^{\epsilon}+\pi^{\sigma}_{\bar\epsilon}\,\mathcal B_{\alpha\beta\gamma\sigma}\,\theta^{\bar\epsilon}
 +\mathcal D_{\alpha\beta\gamma}\eta_1 + \mathcal E_{\alpha\beta\gamma}\big(\eta_2+i\eta_3\big)+\mathcal F_{\alpha\beta\gamma}\big(\eta_2-i\eta_3\big)\\
 \mathcal L^\ast_{\alpha\beta}&\ =\ -
(\mathfrak j\mathcal F)_{\alpha\beta\epsilon}\,\theta^{\epsilon}\!-\!\pi^{\sigma}_{\bar\epsilon}\, \mathcal F_{\alpha\beta\sigma}\,\theta^{\bar\epsilon}
 +i\Big((\mathfrak j\mathcal Z)_{\alpha\beta}-\mathcal Z_{\alpha\beta}\Big)\eta_1 +i \mathcal G_{\alpha\beta}\big(\eta_2+i\eta_3\big)-i(\mathfrak j\mathcal G)_{\alpha\beta}\big(\eta_2-i\eta_3\big)\\
  \mathcal M^\ast_{\alpha\beta}&\ =\ -\mathcal E_{\alpha\beta\epsilon}\,\theta^{\epsilon}+\pi^{\sigma}_{\bar\epsilon}\Big((\mathfrak j\mathcal F)_{\alpha\beta\sigma}-i\mathcal D_{\alpha\beta\sigma}\Big)\,\theta^{\bar\epsilon}
 +\mathcal X_{\alpha\beta}\eta_1 + \mathcal Y_{\alpha\beta}\big(\eta_2+i\eta_3\big)+\mathcal Z_{\alpha\beta}\big(\eta_2-i\eta_3\big)\\
  \mathcal C^\ast_{\alpha}&\ =\ \mathcal G_{\alpha\epsilon}\,\theta^{\epsilon}-i\pi^{\sigma}_{\bar\epsilon}\mathcal Z_{\alpha\sigma}\,\theta^{\bar\epsilon}
 +(\mathcal N_1)_{\alpha}\eta_1 + (\mathcal N_2)_{\alpha}\big(\eta_2+i\eta_3\big)+(\mathcal N_3)_{\alpha}\big(\eta_2-i\eta_3\big)\\
  \mathcal H^\ast_{\alpha}&\ =\ -\mathcal Y_{\alpha\epsilon}\,\theta^{\epsilon}+i\pi^{\sigma}_{\bar\epsilon}\big(\mathcal G_{\alpha\sigma}-\mathcal X_{\alpha\sigma}\big)\theta^{\bar\epsilon}
 +(\mathcal N_4)_{\alpha}\eta_1 + (\mathcal N_5)_{\alpha}\big(\eta_2+i\eta_3\big)\\
 &\qquad\qquad\qquad\qquad\qquad\qquad\qquad\qquad\qquad\qquad\qquad+\Big((\mathcal N_1)_{\alpha}+i\pi^{\bar\sigma}_{\alpha}(\mathcal N_3)_{\bar\sigma}\Big)\big(\eta_2-i\eta_3\big)\\
  \mathcal R^\ast&\ =\  4\pi^{\bar\sigma}_{\epsilon}(\mathcal N_3)_{\bar\sigma}\,\theta^{\epsilon}+4\pi^{\sigma}_{\bar\epsilon}(\mathcal N_3)_{\sigma}\,\theta^{\bar\epsilon}
 +i\big(\mathcal U_3-\overline{\mathcal U}_3\big)\eta_1 -i\big(\mathcal U_1+\mathcal W_3\big)\big(\eta_2+i\eta_3\big)\\
 &\qquad\qquad\qquad\qquad\qquad\qquad\qquad\qquad\qquad\qquad\qquad\qquad+i\big(\overline{\mathcal U}_1+\overline{\mathcal W}_3\big)\big(\eta_2-i\eta_3\big)\\
  \mathcal P^\ast&\ =\ -4(\mathcal N_2)_{\epsilon}\,\theta^{\epsilon}-4\Big((\mathcal N_3)_{\bar\epsilon}+i\pi^{\sigma}_{\bar\epsilon}(\mathcal N_1)_{\sigma}\Big)\,\theta^{\bar\epsilon}
 +\mathcal U_1\eta_1 + \mathcal U_2\big(\eta_2+i\eta_3\big)+\mathcal U_3\big(\eta_2-i\eta_3\big)\\
  \mathcal Q^\ast&\ =\ 4(\mathcal N_5)_{\epsilon}\,\theta^{\epsilon}+4i\pi^{\sigma}_{\bar\epsilon}\Big((\mathcal N_2)_{\sigma}+(\mathcal N_4)_{\sigma}\Big)\,\theta^{\bar\epsilon}
 +\mathcal W_1\eta_1 + \mathcal W_2\big(\eta_2+i\eta_3\big)+\mathcal W_3\big(\eta_2-i\eta_3\big).
 \end{split}
 \end{equation}

\subsection{The Cartan test}
For the (real) dimension $D$ of the vector space determined by \eqref{solution_space_coord}, we calculate
\begin{multline}\label{compute_D}
D=
2{{2n+4}\choose{5}}+4{{2n+3}\choose{4}}+6{{2n+2}\choose{3}}+8{{2n+1}\choose{2}}+20n+12\\
=\frac{2}{15}(2n+5)(2n+3)(n+3)(n+2)(n+1).
\end{multline} 

Following the scheme of Section~\ref{ex_dif_sys}, the problem of finding all possible coframings \eqref{coframe} and functions \eqref{functions_str} satisfying the respective relations (i.e., the problem of finding all local qc structures) may be seen, equivalently, as the problem of solving a certain associated exterior differential system, which we describe next: Let us denote the (real) dimension of $P_1$ by $d_1$. We have
\begin{equation}\label{def_d_1}
d_1={{2n+5}\choose{2}}=(2n+5)(n+2)
\end{equation}
 
 The functions \eqref{functions_str}, with their respective properties (I) and (II) assumed, determine a vector space for which these functions represent the coordinate components of vectors. For the dimension $d_2$ of this vector space, we easily compute
 \begin{equation}\label{def_d_2}
 d_2={{2n+3}\choose{4}}+2{{2n+2}\choose{3}}+3{{2n+1}\choose{2}}+8n+5=\frac{1}{6}(2n+5)(2n+3)(n+2)(n+1).
\end{equation}
 
 Then, the associated exterior differential system that we are considering is defined by a differential ideal $\mathcal I$ on the product manifold 
\begin{equation}\label{ProductN}
N= GL(d_1,\mathbb R)\times\mathbb R^{d_1}\times\mathbb R^{d_2}.
\end{equation} 
We can interpret,  in a natural way (cf. Section~\ref{ex_dif_sys}), \eqref{coframe} and \eqref{functions_str} as one-forms and functions on $N$ respectively. Then, the ideal $\mathcal I$ is algebraically generated by the two-forms given by the structure equations \eqref{first_str_eq}, \eqref{str-eq-con},  \eqref{dGamma_ab}, \eqref{dphi_a}, \eqref{dpsi_1}, \eqref{dpsi_23}, and by the three-forms determined by the Bianchi identities \eqref{d2Gamma_ab}, \eqref{d2phi_a}, \eqref{d2psi_1} and  \eqref{d2psi_2} (these are the only non-trivial equations that we obtain by exterior differentiation of the structure equations). Since only the latter are relevant for the computation of the character sequence of the ideal (cf. Section~\ref{ex_dif_sys}), we will denote them by $\Delta_{\alpha\beta}$, $\Delta_{\alpha}$ and $\Psi_s$ respectively, i.e., we have: 
\begin{multline}\label{Form_d2Gamma_ab}
 {\Delta}_{\alpha\beta}\  {=} \ \pi^{\sigma}_{\bar\delta}{\mathcal S}^\ast_{\alpha\beta\gamma\sigma}\wedge\theta^\gamma\wedge\theta^{\bar\delta}
+\ {\mathcal V}^\ast_{\alpha\beta\gamma}\wedge\theta^\gamma\wedge\eta_1 
 +\ \pi^{\bar\mu}_\alpha\,\pi^{\bar\nu}_\beta\,{\mathcal V}^\ast_{\bar\mu\bar\nu\bar\gamma}\wedge\theta^{\bar\gamma}\wedge\eta_1 \\
 -\ i\pi^{\sigma}_{\bar\gamma}\,{\mathcal V}^\ast_{\alpha\beta\sigma}\wedge\theta^{\bar\gamma}\wedge\big(\eta_2+i\eta_3\big) 
+\ i\pi^{\bar\mu}_\alpha\,\pi^{\bar\nu}_\beta\,\pi^{\bar\xi}_\gamma\,{\mathcal V}^\ast_{\bar\mu\bar\nu\bar\xi}\wedge\theta^{\gamma}\wedge\big(\eta_2-i\eta_3\big) \\
-i\mathcal L_{\alpha\beta}^\ast\wedge\big(\eta_2+i\eta_3\big)\wedge\big(\eta_2-i\eta_3\big) 
+\mathcal M^\ast_{\alpha\beta}\wedge\eta_1\wedge\big(\eta_2+i\eta_3\big) \\
+\pi^{\bar\mu}_\alpha\,\pi^{\bar\nu}_\beta {\mathcal M}^\ast_{\bar\mu\bar\nu}\wedge\eta_1\wedge\big(\eta_2-i\eta_3\big);
 \end{multline} 
 \begin{multline}\label{Form_d2phi_a}
 {\Delta}_{\alpha}\  {=} \ 
-\ i\pi^{\nu}_{\bar\gamma}\,{\mathcal V}^\ast_{\alpha\beta\nu}\wedge\theta^{\beta}\wedge\theta^{\bar\gamma} 
+\pi^{\bar\mu}_\alpha\,\mathcal L_{\bar\mu\bar\beta}^\ast\wedge\theta^{\bar\beta}\wedge\eta_1
 +\mathcal M^\ast_{\alpha\beta}\wedge\theta^{\beta}\wedge\eta_1\\
-i\pi^{\nu}_{\bar\beta}\,\mathcal M^\ast_{\alpha\nu}\wedge\theta^{\bar\beta}\wedge\big(\eta_2+i\eta_3\big) 
+i\mathcal L^\ast_{\alpha\beta}\wedge\theta^{\beta}\wedge\big(\eta_2-i\eta_3\big) 
-\mathcal C^\ast_\alpha
 \wedge\big(\eta_2+i\eta_3\big)\wedge\big(\eta_2-i\eta_3\big) \\
+i\pi_{\alpha}^{\bar\mu}\,\mathcal C^\ast_{\bar\mu}\wedge\eta_1\wedge\big(\eta_2-i\eta_3\big) 
 +\mathcal H^\ast_\alpha
 \wedge\eta_1\wedge\big(\eta_2+i\eta_3\big);
 \end{multline}
 \begin{multline}\label{Form_d2psi_1}
 \Psi_1\  {=} \ 4\pi^\mu_{\bar\gamma}\, \mathcal L^\ast_{\beta\mu}\wedge\theta^{\beta}\wedge\theta^{\bar\gamma}+4\mathcal C^\ast_\beta\wedge\theta^\beta\wedge\eta_1 
 +4\mathcal C^\ast_{\bar\gamma}\wedge\theta^{\bar\gamma}\wedge\eta_1
 +4i\pi_{\beta}^{\bar\mu}\,\mathcal C^\ast_{\bar\mu}\wedge\theta^{\beta}\wedge\big(\eta_2-i\eta_3\big)\\ 
-4i\pi^{\mu}_{\bar\gamma}\,\mathcal C^\ast_{\mu}\wedge\theta^{\bar\gamma}\wedge\big(\eta_2+i\eta_3\big) 
+ \mathcal P^\ast\wedge\eta_1\wedge\big(\eta_2+i\eta_3\big) + \overline{\mathcal P^\ast}\wedge\eta_1\wedge\big(\eta_2-i\eta_3\big)\\
+i\mathcal R^\ast\wedge\big(\eta_2+i\eta_3\big) \wedge\big(\eta_2-i\eta_3\big);
 \end{multline}
  \begin{multline}\label{Form_d2psi_2}
 \Psi_2+i\Psi_3\  {=} \ 4i\pi^{\bar\mu}_{\beta}\, \mathcal M^\ast_{\bar\mu\bar\gamma}\wedge\theta^{\beta}\wedge\theta^{\bar\gamma}
 +4i\pi^{\bar\mu}_{\beta}\,\mathcal C^\ast_{\bar\mu}\wedge\theta^{\beta}\wedge\eta_1
  -4\mathcal H^\ast_{\bar\gamma}\wedge\theta^{\bar\gamma}\wedge\eta_1
  -4\mathcal C^\ast_{\bar\gamma}\wedge\theta^{\bar\gamma}\wedge\big(\eta_2+i\eta_3\big)\\
 -4i\pi^{\bar\mu}_{\beta}\,\mathcal H^\ast_{\bar\mu}\wedge\theta^{\beta}\wedge\big(\eta_2-i\eta_3\big)
 -i\mathcal R^\ast\wedge\eta_1\wedge\big(\eta_2+i\eta_3\big) + \overline{\mathcal Q^\ast}\wedge\eta_1\wedge\big(\eta_2-i\eta_3\big)
 \\ 
-\overline{\mathcal P^\ast}\wedge\big(\eta_2+i\eta_3\big) \wedge\big(\eta_2-i\eta_3\big).
 \end{multline}

In order to show that our exterior differential system $\mathcal I$ is in involution---which would allow us to apply the Cartan's Third Theorem to it---we need to compute the character sequence $v_1,v_2,v_3,\dots,v_{d_1}$ of the system and show that the Cartan's test
\begin{equation} 
D=v_1+2v_2+3v_3+\dots+d_1v_{d_1}
\end{equation}
is satisfied. We will do this in the next section. 
 
 \section{Involutivity of the associated exterior differential
system}\label{sec3}
 
 \subsection{Setting out a few more conventions}\label{extra_conv} According to our current conventions the Greek indices $\alpha,\beta, \gamma$ are running from $1$ to $2n$. Here, however, we will need also indices that have the range $1,\dots,n$ for which we will use again the small Greek letters but already printed in black, e.g, $\pmb\alpha, \pmb\beta, \pmb\gamma, \dots$.  Primed bold indices will be used to indicate a shift by $n$, e.g., $\pmb\alpha'\overset{def}{=}\pmb\alpha+n$, and thus they will always have the range  $(n+1),\dots,2n$. If a number in brackets is used as an index (e.g., [15]), it means that we take a index in the range $1,\dots, n$ that is congruent to the original number in the brackets modulo $n$ (so if n=6, then [15] as an index
corresponds to 3). With this conventions, the constants $\pi_{\alpha\beta}$ from Section~\ref{prelim} are determined by 
 \begin{equation}
 \pi^{\pmb\alpha}_{\pmb{\bar\beta}}=0,\qquad \pi^{\pmb{\alpha'}}_{\pmb{\bar\beta}}=\delta^{\pmb\alpha}_{\pmb\beta}=-\pi^{\pmb{\alpha}}_{\pmb{\bar\beta'}}\qquad\text{($\delta^{\pmb\alpha}_{\pmb\beta}$ being the Kronecker delta).}
 \end{equation}
 
 Furthermore, the properties of the functions ${\mathcal S}_{\alpha\beta\gamma\delta}$ and ${\mathcal L}_{\alpha\beta}$ given by \eqref{properties-curvature} may be, equivalently, written as
\begin{equation}\label{properties_of_S}
{\mathcal S}_{\pmb{\alpha'\beta'\gamma'\delta'}}=\overline{{\mathcal S}_{\pmb{\alpha\beta\gamma\delta}}},\qquad {\mathcal S}_{\pmb{\alpha\beta'\gamma'\delta'}}=-\overline{{\mathcal S}_{\pmb{\alpha'\beta\gamma\delta}}},\qquad {\mathcal S}_{\pmb{\alpha\beta\gamma'\delta'}}=\overline{{\mathcal S}_{\pmb{\alpha'\beta'\gamma\delta}}}, 
\end{equation} 
\begin{equation}\label{properties_of_L}
{\mathcal L}_{\pmb{\alpha'\beta'}}=\overline{{\mathcal L}_{\pmb{\alpha\beta}}},\qquad {\mathcal L}_{\pmb{\alpha\beta'}}=-\overline{{\mathcal L}_{\pmb{\alpha'\beta}}}. 
\end{equation} 

Similarly, since by \eqref{Form_d2Gamma_ab} it can be easily verified that $(\mathfrak j{\Delta})_{\alpha\beta}=\Delta_{\alpha\beta}$, we have also the equations
\begin{equation}\label{properties_delta_ab}
{\Delta}_{\pmb{\alpha'\beta'}}=\overline{{\Delta}_{\pmb{\alpha\beta}}},\qquad {\Delta}_{\pmb{\alpha\beta'}}=-\overline{{\Delta}_{\pmb{\alpha'\beta}}}. 
\end{equation} 
 
\subsection{Introducing an appropriate coordinate system}  Let us fix an integral element $E\subset T_oN$  at the origin $o\in N$ of the associated exterior differential system, defined by the equations 
\begin{equation}\label{DefE}
 \mathcal S^\ast_{\alpha\beta\gamma\delta}\ =\  \mathcal V^\ast_{\alpha\beta\gamma}\ =\ \mathcal  L^\ast_{\alpha\beta}\ =\ \mathcal  M^\ast_{\alpha\beta} \ =\ \mathcal C^\ast_\alpha \ =\  \mathcal H^\ast_\alpha\ =\  \mathcal P^\ast \  =\ \mathcal Q^\ast\  =\ \mathcal R^\ast\ =\ 0
\end{equation}
and the structure equations \eqref{first_str_eq}, \eqref{str-eq-con},  \eqref{dGamma_ab}, \eqref{dphi_a}, \eqref{dpsi_1} and \eqref{dpsi_23}.
In order to compute the sequence of Cartan characters of the ideal $\mathcal I$ (cf. Section~\ref{ex_dif_sys}) we  need to introduce a real basis for the vector space  span$\{\eta_s,\theta^\alpha, \theta^{\bar\alpha}\}$. Let us take $\xi^\alpha,\zeta^\alpha$ to be the real one-forms defined by $\theta^\alpha=\xi^\alpha+i\zeta^\alpha$ and consider the basis $\{\eta_s,\xi^\alpha,\zeta^\alpha\}$. In general,  in the terminology of \cite{Bry} and \cite{BCGGG}, the choice of a bases here corresponds to a choice of an integral flag
\begin{equation*}
\{0\}=E_1\subset E_2\subset \dots \subset E_{d_1}=E
\end{equation*}
which we construct by dualizing the corresponding coframe of $E$.
Part of the difficulty in showing the Cartan's test and computing the corresponding Cartan characters of an ideal lies in the appropriate choice of the integral flag. Unfortunately, the natural choice of real coordinates that we have suggested above does not produce a Cartan-ordinary flag (i.e., a flag for which the Cartan's test is satisfied). Therefore, we will need a slightly more complicated construction here. 

Let  $\mu^{\pmb\alpha}$ and $\nu^{\pmb\alpha}$ be (real) one-forms on $N$ determined by the equations
\begin{equation*}
\begin{cases}
\xi^{n+1}=\mu^{1}+\zeta^{n}+\eta_3,\\
\xi^{\pmb\alpha'}=\mu^{\pmb\alpha}+\zeta^{[\pmb\alpha-1]},\ \text{if}\ \pmb\alpha\ne 1, \\
\zeta^{\pmb\beta'}=\nu^{\pmb\beta}+\mu^{[\pmb\beta-2]}, \ \text{for all}\ 1\le\pmb\beta\le n.
\end{cases}
\end{equation*}
Then, we choose a new basis of one-forms  $\{\epsilon^1,\dots,\epsilon^{4n+3}\}$ for span$\big\{\eta_s,\theta^\alpha, \theta^{\bar\alpha}\big\}$ by setting
\begin{equation}\label{new_basis}
\begin{gathered}
\epsilon^{\pmb\alpha}=\xi^{\pmb\alpha},\qquad \epsilon^{\pmb\alpha+n}=\zeta^{\pmb\alpha},\\ 
\epsilon^{2n+1}=\eta_1,\qquad \epsilon^{2n+2}=\eta_2,\qquad \epsilon^{2n+3}=\eta_3,\qquad \epsilon^{\pmb\alpha+2n+3}=\mu^{\pmb\alpha},\qquad \epsilon^{\pmb\alpha+3n+3}=\nu^{\pmb\alpha}.
\end{gathered}
\end{equation}

Notice that because of \eqref{properties_delta_ab}, we can restrict our attention only to the three-forms $\Delta_{\pmb{\alpha\beta}}$, $\Delta_{\pmb{\alpha\beta'}}$, $\Delta_{\pmb{\alpha}}$, $\Delta_{\pmb{\alpha'}}$ and $\Psi_s$. Substituting \eqref{new_basis} into \eqref{Form_d2Gamma_ab}, \eqref{Form_d2phi_a}, \eqref{Form_d2psi_1} and \eqref{Form_d2psi_2} gives:
\begin{multline}\label{Form_d2Gamma_ab_real}
 {\Delta}_{\pmb\alpha\pmb\beta}\  {=}\ \frac{1}{2}\Big({\mathcal S}^\ast_{\pmb{\alpha\beta\gamma\delta'}}-{\mathcal S}^\ast_{\pmb{\alpha\beta\delta\gamma'}}\Big)\wedge\xi^{\pmb\gamma}\wedge\xi^{\pmb\delta}\ -\   \Big(i{\mathcal S}^\ast_{\pmb{\alpha\beta\gamma\delta'}}+i{\mathcal S}^\ast_{\pmb{\alpha\beta\delta\gamma'}}+{\mathcal S}^\ast_{\pmb{\alpha\beta\gamma}[{\pmb{\delta}}+1]}+{\mathcal S}^\ast_{\pmb{\alpha\beta\gamma'}[{\pmb{\delta}+1}]\pmb'}\Big)\wedge\xi^{\pmb\gamma}\wedge\zeta^{\pmb\delta}\\
\ +\   \frac{1}{2}\Big({\mathcal S}^\ast_{\pmb{\alpha\beta\gamma\delta'}}-{\mathcal S}^\ast_{\pmb{\alpha\beta\delta\gamma'}}+i{\mathcal S}^\ast_{\pmb{\alpha\beta\gamma'}[{\pmb{\delta}+1}]\pmb'}-i{\mathcal S}^\ast_{\pmb{\alpha\beta\delta'}[{\pmb{\gamma}+1}]\pmb'}-i{\mathcal S}^\ast_{\pmb{\alpha\beta\gamma}[{\pmb{\delta}}+1]}+i{\mathcal S}^\ast_{\pmb{\alpha\beta\delta}[{\pmb{\gamma}}+1]}\\
+{\mathcal S}^\ast_{\pmb{\alpha\beta}[{\pmb{\gamma}}+1][{\pmb{\delta}+1}]\pmb'}-{\mathcal S}^\ast_{\pmb{\alpha\beta}[{\pmb{\delta}}+1][{\pmb{\gamma}+1}]\pmb'}\Big)\wedge\zeta^{\pmb\gamma}\wedge\zeta^{\pmb\delta}
\ +\ \Big({\mathcal V}^\ast_{\pmb{\alpha\beta\gamma}}+\overline{{\mathcal V}^\ast_{\pmb{\alpha'\beta'\gamma}}}\Big)\wedge\xi^{\pmb\gamma}\wedge\eta_1\\
+\Big(i{\mathcal V}^\ast_{\pmb{\alpha\beta\gamma}}-i\overline{{\mathcal V}^\ast_{\pmb{\alpha'\beta'\gamma}}}+{\mathcal V}^\ast_{\pmb{\alpha\beta}[{\pmb{\gamma}+1}]\pmb'}+\overline{{\mathcal V}^\ast_{\pmb{\alpha'\beta'}[{\pmb{\gamma}+1}]\pmb'}}\Big)\wedge\zeta^{\pmb\gamma}\wedge\eta_1\ 
-\ i\Big({\mathcal V}^\ast_{\pmb{\alpha\beta\gamma'}}-\overline{{\mathcal V}^\ast_{\pmb{\alpha'\beta'\gamma'}}}\Big)\wedge\xi^{\pmb\gamma}\wedge\eta_2\\
+\Big(-{\mathcal V}^\ast_{\pmb{\alpha\beta\gamma'}}-\overline{{\mathcal V}^\ast_{\pmb{\alpha'\beta'\gamma'}}}+i{\mathcal V}^\ast_{\pmb{\alpha\beta}[{\pmb{\gamma}+1}]}-i\overline{{\mathcal V}^\ast_{\pmb{\alpha'\beta'}[{\pmb{\gamma}+1}]}}\Big)\wedge\zeta^{\pmb\gamma}\wedge\eta_2\ 
+\ \Big({\mathcal M}^\ast_{\pmb{\alpha\beta}}+\overline{{\mathcal M}^\ast_{\pmb{\alpha'\beta'}}}\Big)\wedge\eta_1\wedge\eta_2\\
+\ \Big({\mathcal V}^\ast_{\pmb{\alpha\beta\gamma'}}+\overline{{\mathcal V}^\ast_{\pmb{\alpha'\beta'\gamma'}}}
-{\mathcal S}^\ast_{\pmb{\alpha\beta\gamma}1}-{\mathcal S}^\ast_{\pmb{\alpha\beta\gamma'}1\pmb'}+i{\mathcal S}^\ast_{\pmb{\alpha\beta\gamma}[3]}-i{\mathcal S}^\ast_{\pmb{\alpha\beta\gamma'}[3]\pmb'}\Big)\wedge\xi^{\pmb\gamma}\wedge\eta_3\\
+\Big(-i{\mathcal V}^\ast_{\pmb{\alpha\beta\gamma'}}+i\overline{{\mathcal V}^\ast_{\pmb{\alpha'\beta'\gamma'}}}-{\mathcal V}^\ast_{\pmb{\alpha\beta}[{\pmb{\gamma}+1}]}-\overline{{\mathcal V}^\ast_{\pmb{\alpha'\beta'}[{\pmb{\gamma}+1}]}}
+i{\mathcal S}^\ast_{\pmb{\alpha\beta\gamma'}1\pmb'}-i{\mathcal S}^\ast_{\pmb{\alpha\beta\gamma}1}
+{\mathcal S}^\ast_{\pmb{\alpha\beta}[{\pmb{\gamma}}+1]1\pmb'}-{\mathcal S}^\ast_{\pmb{\alpha\beta}[{\pmb{\gamma}+1}]\pmb'1}\\
-{\mathcal S}^\ast_{\pmb{\alpha\beta\gamma}[3]}
-{\mathcal S}^\ast_{\pmb{\alpha\beta\gamma'}[3]\pmb'}+i{\mathcal S}^\ast_{\pmb{\alpha\beta}[{\pmb{\gamma}+1}][3]\pmb'}
+i{\mathcal S}^\ast_{\pmb{\alpha\beta}[3][{\pmb{\gamma}+1}]\pmb'}\Big)\wedge\zeta^{\pmb\gamma}\wedge\eta_3\\
+\Big(i{\mathcal M}^\ast_{\pmb{\alpha\beta}}-i\overline{{\mathcal M}^\ast_{\pmb{\alpha'\beta'}}}
-{\mathcal V}^\ast_{\pmb{\alpha\beta} 1\pmb'}-\overline{{\mathcal V}^\ast_{\pmb{\alpha'\beta'}1\pmb'}}
-i{\mathcal V}^\ast_{\pmb{\alpha\beta}[3]\pmb'}+i\overline{{\mathcal V}^\ast_{\pmb{\alpha'\beta'}[3]\pmb'}}\Big)\wedge\eta_1\wedge\eta_3\\
+\Big(-2{\mathcal L}^\ast_{\pmb{\alpha\beta}}
-i{\mathcal V}^\ast_{\pmb{\alpha\beta} 1}+i\overline{{\mathcal V}^\ast_{\pmb{\alpha'\beta'}1}}
-{\mathcal V}^\ast_{\pmb{\alpha\beta}[3]}-\overline{{\mathcal V}^\ast_{\pmb{\alpha'\beta'}[3]}}\Big)\wedge\eta_2\wedge\eta_3\ +\ \dots\ ;
 \end{multline}

\begin{multline}\label{Form_d2Gamma_ab_prim_real}
 {\Delta}_{\pmb{\alpha\beta'}}\  {=}\ \frac{1}{2}\Big({\mathcal S}^\ast_{\pmb{\alpha\beta'\gamma\delta'}}-{\mathcal S}^\ast_{\pmb{\alpha\beta'\delta\gamma'}}\Big)\wedge\xi^{\pmb\gamma}\wedge\xi^{\pmb\delta}\ 
 -\   \Big(i{\mathcal S}^\ast_{\pmb{\alpha\beta'\gamma\delta'}}+i{\mathcal S}^\ast_{\pmb{\alpha\beta'\delta\gamma'}}+{\mathcal S}^\ast_{\pmb{\alpha\beta'\gamma}[{\pmb{\delta}}+1]}-\overline{{\mathcal S}^\ast_{\pmb{\alpha'\beta\gamma}[{\pmb{\delta}+1}]}}\Big)\wedge\xi^{\pmb\gamma}\wedge\zeta^{\pmb\delta}\\
\ +\   \frac{1}{2}\Big({\mathcal S}^\ast_{\pmb{\alpha\beta'\gamma\delta'}}-{\mathcal S}^\ast_{\pmb{\alpha\beta'\delta\gamma'}}
-i\overline{{\mathcal S}^\ast_{\pmb{\alpha'\beta\gamma}[{\pmb{\delta}+1}]}}+i\overline{{\mathcal S}^\ast_{\pmb{\alpha'\beta\delta}[{\pmb{\gamma}+1}]}}-i{\mathcal S}^\ast_{\pmb{\alpha\beta'\gamma}[{\pmb{\delta}}+1]}+i{\mathcal S}^\ast_{\pmb{\alpha\beta'\delta}[{\pmb{\gamma}}+1]}\\
+{\mathcal S}^\ast_{\pmb{\alpha\beta'}[{\pmb{\gamma}}+1][{\pmb{\delta}+1}]\pmb'}-{\mathcal S}^\ast_{\pmb{\alpha\beta'}[{\pmb{\delta}}+1][{\pmb{\gamma}+1}]\pmb'}\Big)\wedge\zeta^{\pmb\gamma}\wedge\zeta^{\pmb\delta}
\ +\ \Big({\mathcal V}^\ast_{\pmb{\alpha\beta'\gamma}}-\overline{{\mathcal V}^\ast_{\pmb{\alpha'\beta\gamma}}}\Big)\wedge\xi^{\pmb\gamma}\wedge\eta_1\\
+\Big(i{\mathcal V}^\ast_{\pmb{\alpha\beta'\gamma}}+i\overline{{\mathcal V}^\ast_{\pmb{\alpha'\beta\gamma}}}+{\mathcal V}^\ast_{\pmb{\alpha\beta'}[{\pmb{\gamma}+1}]\pmb'}-\overline{{\mathcal V}^\ast_{\pmb{\alpha'\beta}[{\pmb{\gamma}+1}]\pmb'}}\Big)\wedge\zeta^{\pmb\gamma}\wedge\eta_1\ 
-\ i\Big({\mathcal V}^\ast_{\pmb{\alpha\beta'\gamma'}}+\overline{{\mathcal V}^\ast_{\pmb{\alpha'\beta\gamma'}}}\Big)\wedge\xi^{\pmb\gamma}\wedge\eta_2\\
+\Big(-{\mathcal V}^\ast_{\pmb{\alpha\beta'\gamma'}}+\overline{{\mathcal V}^\ast_{\pmb{\alpha'\beta\gamma'}}}
+i{\mathcal V}^\ast_{\pmb{\alpha\beta'}[{\pmb{\gamma}+1}]}+i\overline{{\mathcal V}^\ast_{\pmb{\alpha'\beta}[{\pmb{\gamma}+1}]}}\Big)\wedge\zeta^{\pmb\gamma}\wedge\eta_2\ 
+\ \Big({\mathcal M}^\ast_{\pmb{\alpha\beta'}}-\overline{{\mathcal M}^\ast_{\pmb{\alpha'\beta}}}\Big)\wedge\eta_1\wedge\eta_2\\
+\Big({\mathcal V}^\ast_{\pmb{\alpha\beta'\gamma'}}-\overline{{\mathcal V}^\ast_{\pmb{\alpha'\beta\gamma'}}}
-{\mathcal S}^\ast_{\pmb{\alpha\beta'\gamma}1}+\overline{{\mathcal S}^\ast_{\pmb{\alpha'\beta\gamma}1}}
+i{\mathcal S}^\ast_{\pmb{\alpha\beta'\gamma}[3]}+i\overline{{\mathcal S}^\ast_{\pmb{\alpha'\beta\gamma}[3]}}\Big)\wedge\xi^{\pmb\gamma}\wedge\eta_3\\
+\Big(-i{\mathcal V}^\ast_{\pmb{\alpha\beta'\gamma'}}-i\overline{{\mathcal V}^\ast_{\pmb{\alpha'\beta\gamma'}}}
-{\mathcal V}^\ast_{\pmb{\alpha\beta'}[{\pmb{\gamma}+1}]}+\overline{{\mathcal V}^\ast_{\pmb{\alpha'\beta}[{\pmb{\gamma}+1}]}}
-i\overline{{\mathcal S}^\ast_{\pmb{\alpha'\beta\gamma}1}}-i{\mathcal S}^\ast_{\pmb{\alpha\beta'\gamma}1}
+{\mathcal S}^\ast_{\pmb{\alpha\beta'}[{\pmb{\gamma}}+1]1\pmb'}-{\mathcal S}^\ast_{\pmb{\alpha\beta'}[{\pmb{\gamma}+1}]\pmb'1}\\
-{\mathcal S}^\ast_{\pmb{\alpha\beta'\gamma}[3]}
+\overline{{\mathcal S}^\ast_{\pmb{\alpha'\beta\gamma}[3]}}+i{\mathcal S}^\ast_{\pmb{\alpha\beta'}[{\pmb{\gamma}+1}][3]\pmb'}
+i{\mathcal S}^\ast_{\pmb{\alpha\beta'}[3][{\pmb{\gamma}+1}]\pmb'}\Big)\wedge\zeta^{\pmb\gamma}\wedge\eta_3\\
+\Big(i{\mathcal M}^\ast_{\pmb{\alpha\beta'}}+i\overline{{\mathcal M}^\ast_{\pmb{\alpha'\beta}}}
-{\mathcal V}^\ast_{\pmb{\alpha\beta'} 1\pmb'}+\overline{{\mathcal V}^\ast_{\pmb{\alpha'\beta}1\pmb'}}
-i{\mathcal V}^\ast_{\pmb{\alpha\beta'}[3]\pmb'}-i\overline{{\mathcal V}^\ast_{\pmb{\alpha'\beta}[3]\pmb'}}\Big)\wedge\eta_1\wedge\eta_3\\
+\Big(-2{\mathcal L}^\ast_{\pmb{\alpha\beta'}}
-i{\mathcal V}^\ast_{\pmb{\alpha\beta'} 1}-i\overline{{\mathcal V}^\ast_{\pmb{\alpha'\beta}1}}
-{\mathcal V}^\ast_{\pmb{\alpha\beta'}[3]}+\overline{{\mathcal V}^\ast_{\pmb{\alpha'\beta}[3]}}\Big)\wedge\eta_2\wedge\eta_3\ +\ \dots\ ;
 \end{multline}

\begin{multline}\label{Form_d2phi_a_real}
 {\Delta}_{\pmb\alpha}\  {=}\ -\frac{i}{2}\Big({\mathcal V}^\ast_{\pmb{\alpha\beta\gamma'}}-{\mathcal V}^\ast_{\pmb{\alpha\beta'\gamma}}\Big)\wedge\xi^{\pmb\beta}\wedge\xi^{\pmb\gamma}\ 
 -\   \Big({\mathcal V}^\ast_{\pmb{\alpha\beta\gamma'}}+{\mathcal V}^\ast_{\pmb{\alpha\beta'\gamma}}-i{\mathcal V}^\ast_{\pmb{\alpha\beta}[{\pmb{\gamma}}+1]}
 -i{\mathcal V}^\ast_{\pmb{\alpha\beta'}[{\pmb{\gamma}+1}]\pmb'}\Big)\wedge\xi^{\pmb\beta}\wedge\zeta^{\pmb\gamma}\\
\ +\   \frac{1}{2}\Big(-i{\mathcal V}^\ast_{\pmb{\alpha\beta\gamma'}}+i{\mathcal V}^\ast_{\pmb{\alpha\beta'\gamma}}
+{\mathcal V}^\ast_{\pmb{\alpha\beta'}[{\pmb{\gamma}+1}]\pmb'}-{\mathcal V}^\ast_{\pmb{\alpha\gamma'}[{\pmb{\beta}+1}]\pmb'}
-{\mathcal V}^\ast_{\pmb{\alpha\beta}[{\pmb{\gamma}}+1]}+{\mathcal V}^\ast_{\pmb{\alpha\gamma}[{\pmb{\beta}}+1]}\\
-i{\mathcal V}^\ast_{\pmb{\alpha}[{\pmb{\beta}}+1][{\pmb{\gamma}+1}]\pmb'}+i{\mathcal V}^\ast_{\pmb{\alpha}[{\pmb{\gamma}}+1][{\pmb{\beta}+1}]\pmb'}\Big)\wedge\zeta^{\pmb\beta}\wedge\zeta^{\pmb\gamma}
\ +\ \Big(-{\mathcal L}^\ast_{\pmb{\alpha\beta'}}+{\mathcal M}^\ast_{\pmb{\alpha\beta}}\Big)\wedge\xi^{\pmb\beta}\wedge\eta_1\\
+\Big(i{\mathcal L}^\ast_{\pmb{\alpha\beta'}}+i{\mathcal M}^\ast_{\pmb{\alpha\beta}}+{\mathcal L}^\ast_{\pmb{\alpha}[{\pmb{\beta}+1}]}
+{\mathcal M}^\ast_{\pmb{\alpha}[{\pmb{\beta}+1}]\pmb'}\Big)\wedge\zeta^{\pmb\beta}\wedge\eta_1\ 
+\ \Big(i{\mathcal L}^\ast_{\pmb{\alpha\beta}}-i{\mathcal M}^\ast_{\pmb{\alpha\beta'}}\Big)\wedge\xi^{\pmb\beta}\wedge\eta_2\\
+\Big(-{\mathcal L}^\ast_{\pmb{\alpha\beta}}-{\mathcal M}^\ast_{\pmb{\alpha\beta'}}+i{\mathcal L}^\ast_{\pmb{\alpha}[{\pmb{\beta}+1}]\pmb'}
+i{\mathcal M}^\ast_{\pmb{\alpha}[{\pmb{\beta}+1}]}\Big)\wedge\zeta^{\pmb\beta}\wedge\eta_2\ 
+\ \Big(i\overline{{\mathcal C}^\ast_{\pmb{\alpha'}}}+{\mathcal H}^\ast_{\pmb{\alpha}}\Big)\wedge\eta_1\wedge\eta_2\\
+\Big({\mathcal L}^\ast_{\pmb{\alpha\beta}}+{\mathcal M}^\ast_{\pmb{\alpha\beta'}}
+i{\mathcal V}^\ast_{\pmb{\alpha\beta}1}+i{\mathcal V}^\ast_{\pmb{\alpha\beta'}1\pmb'}+{\mathcal V}^\ast_{\pmb{\alpha\beta}[3]}
-{\mathcal V}^\ast_{\pmb{\alpha\beta'}[3]\pmb'}\Big)\wedge\xi^{\pmb\beta}\wedge\eta_3\\
+\Big(i{\mathcal L}^\ast_{\pmb{\alpha\beta}}-i{\mathcal M}^\ast_{\pmb{\alpha\beta'}}+{\mathcal L}^\ast_{\pmb{\alpha}[{\pmb{\beta}+1}]\pmb'}
-{\mathcal M}^\ast_{\pmb{\alpha}[{\pmb{\beta}+1}]}
+{\mathcal V}^\ast_{\pmb{\alpha\beta'}1\pmb'}-{\mathcal V}^\ast_{\pmb{\alpha\beta}1}
-i{\mathcal V}^\ast_{\pmb{\alpha}[{\pmb{\beta}}+1]1\pmb'}+i{\mathcal V}^\ast_{\pmb{\alpha}[{\pmb{\beta}+1}]\pmb'1}\\
+i{\mathcal V}^\ast_{\pmb{\alpha\beta'}[3]\pmb'}
+i{\mathcal V}^\ast_{\pmb{\alpha\beta}[3]}+{\mathcal V}^\ast_{\pmb{\alpha}[{\pmb{\beta}+1}][3]\pmb'}
+{\mathcal V}^\ast_{\pmb{\alpha}[3][{\pmb{\beta}+1}]\pmb'}\Big)\wedge\zeta^{\pmb\beta}\wedge\eta_3\\
+\Big(\overline{{\mathcal C}^\ast_{\pmb{\alpha'}}}+i{\mathcal H}^\ast_{\pmb{\alpha}}
-{\mathcal L}^\ast_{\pmb{\alpha}1}-{\mathcal M}^\ast_{\pmb{\alpha}1\pmb'}
+i{\mathcal L}^\ast_{\pmb{\alpha}[3]}-i{\mathcal M}^\ast_{\pmb{\alpha}[3]\pmb'}\Big)\wedge\eta_1\wedge\eta_3\\
+\Big(2i{\mathcal C}^\ast_{\pmb{\alpha}}
-i{\mathcal L}^\ast_{\pmb{\alpha}1\pmb'}-i{\mathcal M}^\ast_{\pmb{\alpha}1}
+{\mathcal L}^\ast_{\pmb{\alpha}[3]\pmb'}-{\mathcal M}^\ast_{\pmb{\alpha}[3]}\Big)\wedge\eta_2\wedge\eta_3\ +\ \dots\ ;
 \end{multline}

\begin{multline}\label{Form_d2phi_a_prim_real}
 {\Delta}_{\pmb{\alpha'}}\  {=}\ \frac{i}{2}\Big({\mathcal V}^\ast_{\pmb{\alpha'\beta'\gamma}}-{\mathcal V}^\ast_{\pmb{\alpha'\gamma'\beta}}\Big)\wedge\xi^{\pmb\beta}\wedge\xi^{\pmb\gamma}\ 
 -\   \Big({\mathcal V}^\ast_{\pmb{\alpha'\beta'\gamma}}+{\mathcal V}^\ast_{\pmb{\alpha'\gamma'\beta}}-i{\mathcal V}^\ast_{\pmb{\alpha'\beta'}[{\pmb{\gamma}}+1]\pmb'}
 -i{\mathcal V}^\ast_{\pmb{\alpha'\beta}[{\pmb{\gamma}+1}]}\Big)\wedge\xi^{\pmb\beta}\wedge\zeta^{\pmb\gamma}\\
\ +\   \frac{1}{2}\Big(i{\mathcal V}^\ast_{\pmb{\alpha'\beta'\gamma}}-i{\mathcal V}^\ast_{\pmb{\alpha'\gamma'\beta}}
+{\mathcal V}^\ast_{\pmb{\alpha'\beta'}[{\pmb{\gamma}+1}]\pmb'}-{\mathcal V}^\ast_{\pmb{\alpha'\gamma'}[{\pmb{\beta}+1}]\pmb'}
-{\mathcal V}^\ast_{\pmb{\alpha'\beta}[{\pmb{\gamma}}+1]}+{\mathcal V}^\ast_{\pmb{\alpha'\gamma}[{\pmb{\beta}}+1]}\\
+i{\mathcal V}^\ast_{\pmb{\alpha'}[{\pmb{\beta}}+1]\pmb'[{\pmb{\gamma}+1}]}-i{\mathcal V}^\ast_{\pmb{\alpha'}[{\pmb{\gamma}}+1]\pmb'[{\pmb{\beta}+1}]}\Big)\wedge\zeta^{\pmb\beta}\wedge\zeta^{\pmb\gamma}
\ +\ \Big(-\overline{{\mathcal L}^\ast_{\pmb{\alpha\beta}}}+{\mathcal M}^\ast_{\pmb{\alpha'\beta}}\Big)\wedge\xi^{\pmb\beta}\wedge\eta_1\\
+\Big(i\overline{{\mathcal L}^\ast_{\pmb{\alpha\beta}}}+i{\mathcal M}^\ast_{\pmb{\alpha'\beta}}+{\mathcal L}^\ast_{\pmb{\alpha'}[{\pmb{\beta}+1}]}
+{\mathcal M}^\ast_{\pmb{\alpha'}[{\pmb{\beta}+1}]\pmb'}\Big)\wedge\zeta^{\pmb\beta}\wedge\eta_1\ 
+\ \Big(i{\mathcal L}^\ast_{\pmb{\alpha'\beta}}-i{\mathcal M}^\ast_{\pmb{\alpha'\beta'}}\Big)\wedge\xi^{\pmb\beta}\wedge\eta_2\\
+\Big(-{\mathcal L}^\ast_{\pmb{\alpha'\beta}}-{\mathcal M}^\ast_{\pmb{\alpha'\beta'}}+i\overline{{\mathcal L}^\ast_{\pmb{\alpha}[{\pmb{\beta}+1}]}}
+i{\mathcal M}^\ast_{\pmb{\alpha'}[{\pmb{\beta}+1}]}\Big)\wedge\zeta^{\pmb\beta}\wedge\eta_2\ 
+\ \Big(-i\overline{{\mathcal C}^\ast_{\pmb{\alpha}}}+{\mathcal H}^\ast_{\pmb{\alpha'}}\Big)\wedge\eta_1\wedge\eta_2\\
+\Big({\mathcal L}^\ast_{\pmb{\alpha'\beta}}+{\mathcal M}^\ast_{\pmb{\alpha'\beta'}}
+i{\mathcal V}^\ast_{\pmb{\alpha'\beta'}1\pmb'}+i{\mathcal V}^\ast_{\pmb{\alpha'\beta}1}+{\mathcal V}^\ast_{\pmb{\alpha'\beta}[3]}
-{\mathcal V}^\ast_{\pmb{\alpha'\beta'}[3]\pmb'}\Big)\wedge\xi^{\pmb\beta}\wedge\eta_3\\
+\Big(i{\mathcal L}^\ast_{\pmb{\alpha'\beta}}-i{\mathcal M}^\ast_{\pmb{\alpha'\beta'}}+\overline{{\mathcal L}^\ast_{\pmb{\alpha}[{\pmb{\beta}+1}]}}
-{\mathcal M}^\ast_{\pmb{\alpha'}[{\pmb{\beta}+1}]}
+{\mathcal V}^\ast_{\pmb{\alpha'\beta'}1\pmb'}-{\mathcal V}^\ast_{\pmb{\alpha'\beta}1}
+i{\mathcal V}^\ast_{\pmb{\alpha'}[{\pmb{\beta}}+1]\pmb'1}-i{\mathcal V}^\ast_{\pmb{\alpha'}[{\pmb{\beta}+1}]1\pmb'}\\
+i{\mathcal V}^\ast_{\pmb{\alpha'\beta}[3]}
+i{\mathcal V}^\ast_{\pmb{\alpha'\beta'}[3]\pmb'}+{\mathcal V}^\ast_{\pmb{\alpha'}[{\pmb{\beta}+1}]\pmb'[3]}
+{\mathcal V}^\ast_{\pmb{\alpha'}[3]\pmb'[{\pmb{\beta}+1}]}\Big)\wedge\zeta^{\pmb\beta}\wedge\eta_3\\
+\Big(-\overline{{\mathcal C}^\ast_{\pmb{\alpha}}}+i{\mathcal H}^\ast_{\pmb{\alpha'}}
-{\mathcal L}^\ast_{\pmb{\alpha'}1}-{\mathcal M}^\ast_{\pmb{\alpha'}1\pmb'}
+i{\mathcal L}^\ast_{\pmb{\alpha'}[3]}-i{\mathcal M}^\ast_{\pmb{\alpha'}[3]\pmb'}\Big)\wedge\eta_1\wedge\eta_3\\
+\Big(2i{\mathcal C}^\ast_{\pmb{\alpha'}}
-i\overline{{\mathcal L}^\ast_{\pmb{\alpha}1}}-i{\mathcal M}^\ast_{\pmb{\alpha'}1}
+\overline{{\mathcal L}^\ast_{\pmb{\alpha}[3]}}-{\mathcal M}^\ast_{\pmb{\alpha'}[3]}\Big)\wedge\eta_2\wedge\eta_3\ +\ \dots\ ;
 \end{multline}

\begin{multline}\label{Form_d2psi_1_real}
 {\Psi}_1\  {=}\ 2\Big({\mathcal L}^\ast_{\pmb{\alpha\beta'}}-{\mathcal L}^\ast_{\pmb{\alpha'\beta}}\Big)\wedge\xi^{\pmb\alpha}\wedge\xi^{\pmb\beta}\ 
 -\   4\Big(i{\mathcal L}^\ast_{\pmb{\alpha\beta'}}+i{\mathcal L}^\ast_{\pmb{\alpha'\beta}}+\overline{{\mathcal L}^\ast_{\pmb{\alpha}[{\pmb{\beta}}+1]}}
 +{\mathcal L}^\ast_{\pmb{\alpha}[{\pmb{\beta}+1}]}\Big)\wedge\xi^{\pmb\alpha}\wedge\zeta^{\pmb\beta}\\
\ +\   2\Big({\mathcal L}^\ast_{\pmb{\alpha\beta'}}-{\mathcal L}^\ast_{\pmb{\alpha'\beta}}
-i{\mathcal L}^\ast_{\pmb{\alpha}[{\pmb{\beta}+1}]}+i{\mathcal L}^\ast_{\pmb{\beta}[{\pmb{\alpha}+1}]}
+i\overline{{\mathcal L}^\ast_{\pmb{\alpha}[{\pmb{\beta}}+1]}}-i\overline{{\mathcal L}^\ast_{\pmb{\beta}[{\pmb{\alpha}}+1]}}\\
-{\mathcal L}^\ast_{[{\pmb{\alpha}}+1]\pmb'[{\pmb{\beta}+1}]}+{\mathcal L}^\ast_{[{\pmb{\beta}}+1]\pmb'[{\pmb{\alpha}+1}]}\Big)\wedge\zeta^{\pmb\alpha}\wedge\zeta^{\pmb\beta}
\ +\ 4\Big({\mathcal C}^\ast_{\pmb{\alpha}}+\overline{{\mathcal C}^\ast_{\pmb{\alpha}}}\Big)\wedge\xi^{\pmb\alpha}\wedge\eta_1\\
+4\Big(i{\mathcal C}^\ast_{\pmb{\alpha}}-i\overline{{\mathcal C}^\ast_{\pmb{\alpha}}}+{\mathcal C}^\ast_{[{\pmb{\alpha}+1}]\pmb'}
+\overline{{\mathcal C}^\ast_{[{\pmb{\alpha}+1}]\pmb'}}\Big)\wedge\zeta^{\pmb\alpha}\wedge\eta_1\ 
-\ 4i\Big({\mathcal C}^\ast_{\pmb{\alpha'}}-\overline{{\mathcal C}^\ast_{\pmb{\alpha'}}}\Big)\wedge\xi^{\pmb\alpha}\wedge\eta_2\\
-\ 4\Big({\mathcal C}^\ast_{\pmb{\alpha'}}+\overline{{\mathcal C}^\ast_{\pmb{\alpha'}}}-i{\mathcal C}^\ast_{[{\pmb{\alpha}+1}]}
+i\overline{{\mathcal C}^\ast_{[{\pmb{\alpha}+1}]}} \Big)\wedge\zeta^{\pmb\alpha}\wedge\eta_2\ 
+\ \Big({\mathcal P}^\ast+\overline{\mathcal P}^\ast\Big)\wedge\eta_1\wedge\eta_2\\
+\ 4\Big({\mathcal C}^\ast_{\pmb{\alpha'}}+\overline{{\mathcal C}^\ast_{\pmb{\alpha'}}}
-{\mathcal L}^\ast_{\pmb{\alpha}1}-\overline{{\mathcal L}^\ast_{\pmb{\alpha}1}}+i{\mathcal L}^\ast_{\pmb{\alpha}[3]}
-i\overline{{\mathcal L}^\ast_{\pmb{\alpha}[3]}}\Big)\wedge\xi^{\pmb\alpha}\wedge\eta_3\\
+\ 4\Big(-i{\mathcal C}^\ast_{\pmb{\alpha'}}+i\overline{{\mathcal C}^\ast_{\pmb{\alpha'}}}-{\mathcal C}^\ast_{[{\pmb{\alpha}+1}]}
-\overline{{\mathcal C}^\ast_{[{\pmb{\alpha}+1}]}}
-i{\mathcal L}^\ast_{\pmb{\alpha}1}+i\overline{{\mathcal L}^\ast_{\pmb{\alpha}1}}
-{\mathcal L}^\ast_{[{\pmb{\alpha}}+1]\pmb'1}+{\mathcal L}^\ast_{[{\pmb{\alpha}+1}]1\pmb'}\\
-{\mathcal L}^\ast_{\pmb{\alpha}[3]}
-\overline{{\mathcal L}^\ast_{\pmb{\alpha}[3]}}+i{\mathcal L}^\ast_{[{\pmb{\alpha}+1}]\pmb'[3]}
+i{\mathcal L}^\ast_{[3]\pmb'[{\pmb{\alpha}+1}]}\Big)\wedge\zeta^{\pmb\alpha}\wedge\eta_3\\
+\ \Big(i{\mathcal P}^\ast-i\overline{\mathcal P}^\ast
-4{\mathcal C}^\ast_{1\pmb'}-4\overline{{\mathcal C}^\ast_{1\pmb'}}
-4i{\mathcal C}^\ast_{[3]\pmb'}+4i\overline{{\mathcal C}^\ast_{[3]\pmb'}}\Big)\wedge\eta_1\wedge\eta_3\\
+\Big(2{\mathcal R}^\ast
-4i{\mathcal C}^\ast_{1}+4i\overline{{\mathcal C}^\ast_{1}}
-4{\mathcal C}^\ast_{[3]}-4\overline{{\mathcal C}^\ast_{[3]}}\Big)\wedge\eta_2\wedge\eta_3\ +\ \dots\ ;
 \end{multline}

\begin{multline}\label{Form_d2psi_23_real}
 {\Psi}_2-i\Psi_3\  {=}\ -2i\Big({\mathcal M}^\ast_{\pmb{\alpha'\beta}}-{\mathcal M}^\ast_{\pmb{\alpha\beta'}}\Big)\wedge\xi^{\pmb\alpha}\wedge\xi^{\pmb\beta}\ 
+\   4\Big({\mathcal M}^\ast_{\pmb{\alpha'\beta}}+{\mathcal M}^\ast_{\pmb{\alpha\beta'}}
-i{\mathcal M}^\ast_{\pmb{\alpha'}[{\pmb{\beta}}+1]\pmb'}
 -i{\mathcal M}^\ast_{\pmb{\alpha}[{\pmb{\beta}+1}]}\Big)\wedge\xi^{\pmb\alpha}\wedge\zeta^{\pmb\beta}\\
\ +\   2\Big(-i{\mathcal M}^\ast_{\pmb{\alpha'\beta}}+i{\mathcal M}^\ast_{\pmb{\alpha\beta'}}
+{\mathcal M}^\ast_{\pmb{\alpha}[{\pmb{\beta}+1}]}-{\mathcal M}^\ast_{\pmb{\beta}[{\pmb{\alpha}+1}]}
-{\mathcal M}^\ast_{\pmb{\alpha'}[{\pmb{\beta}}+1]'}+{\mathcal M}^\ast_{\pmb{\beta'}[{\pmb{\alpha}}+1]'}\\
-i{\mathcal M}^\ast_{[{\pmb{\alpha}}+1]\pmb'[{\pmb{\beta}+1}]}+i{\mathcal M}^\ast_{[{\pmb{\beta}}+1]\pmb'[{\pmb{\alpha}+1}]}\Big)\wedge\zeta^{\pmb\alpha}\wedge\zeta^{\pmb\beta}
\ -\ 4\Big(i{\mathcal C}^\ast_{\pmb{\alpha'}}+{\mathcal H}^\ast_{\pmb{\alpha}}\Big)\wedge\xi^{\pmb\alpha}\wedge\eta_1\\
+4\Big(-{\mathcal C}^\ast_{\pmb{\alpha'}}-i{\mathcal H}^\ast_{\pmb{\alpha}}+i{\mathcal C}^\ast_{[{\pmb{\alpha}+1}]}
-{\mathcal H}^\ast_{[{\pmb{\alpha}+1}]\pmb'}\Big)\wedge\zeta^{\pmb\alpha}\wedge\eta_1\ 
+\ 4i\Big(i{\mathcal C}^\ast_{\pmb{\alpha}}+{\mathcal H}^\ast_{\pmb{\alpha'}}\Big)\wedge\xi^{\pmb\alpha}\wedge\eta_2\\
-\ 4\Big(i{\mathcal C}^\ast_{\pmb{\alpha}}-{\mathcal H}^\ast_{\pmb{\alpha'}}+{\mathcal C}^\ast_{[{\pmb{\alpha}+1}]\pmb'}
+i{\mathcal H}^\ast_{[{\pmb{\alpha}+1}]} \Big)\wedge\zeta^{\pmb\alpha}\wedge\eta_2\ 
+\ \Big(i{\mathcal R}^\ast+{\mathcal Q}^\ast\Big)\wedge\eta_1\wedge\eta_2\\
+\ 4\Big(i{\mathcal C}^\ast_{\pmb{\alpha}}-{\mathcal H}^\ast_{\pmb{\alpha'}}
-i{\mathcal M}^\ast_{\pmb{\alpha'}1\pmb'}-i{\mathcal M}^\ast_{\pmb{\alpha}1}-{\mathcal M}^\ast_{\pmb{\alpha}[3]}
+{\mathcal M}^\ast_{\pmb{\alpha'}[3]\pmb'}\Big)\wedge\xi^{\pmb\alpha}\wedge\eta_3\\
+\ 4\Big(-{\mathcal C}^\ast_{\pmb{\alpha}}+i{\mathcal H}^\ast_{\pmb{\alpha'}}+i{\mathcal C}^\ast_{[{\pmb{\alpha}+1}]\pmb'}
+{\mathcal H}^\ast_{[{\pmb{\alpha}+1}]}
+{\mathcal M}^\ast_{\pmb{\alpha}1}-{\mathcal M}^\ast_{\pmb{\alpha'}1'}
-i{\mathcal M}^\ast_{[{\pmb{\alpha}}+1]\pmb'1}+i{\mathcal M}^\ast_{[{\pmb{\alpha}+1}]1\pmb'}\\
-i{\mathcal M}^\ast_{\pmb{\alpha}[3]}
-i{\mathcal M}^\ast_{\pmb{\alpha'}[3]\pmb'}-{\mathcal M}^\ast_{[{\pmb{\alpha}+1}]\pmb'[3]}
-{\mathcal M}^\ast_{[3]\pmb'[{\pmb{\alpha}+1}]}\Big)\wedge\zeta^{\pmb\alpha}\wedge\eta_3\\
+\ \Big({\mathcal R}^\ast+i{\mathcal Q}^\ast
-4i{\mathcal C}^\ast_{1}+4{\mathcal H}^\ast_{1\pmb'}
-4{\mathcal C}^\ast_{[3]}+4i{\mathcal H}^\ast_{[3]\pmb'}\Big)\wedge\eta_1\wedge\eta_3\\
+\Big(-2i{\mathcal P}^\ast
+4{\mathcal C}^\ast_{1\pmb'}+4i{\mathcal H}^\ast_{1}
+4i{\mathcal C}^\ast_{[3]\pmb'}+4{\mathcal H}^\ast_{[3]}\Big)\wedge\eta_2\wedge\eta_3\ +\ \dots\ .
 \end{multline} 
  
 In the above identities we have omitted all the terms involving wedge products with basis one-forms \eqref{new_basis} of index grater than $2n+3$ (and have replaced them by "$\dots$") since they will turn out to be irrelevant for our further considerations here.    

For each integer $1\le \lambda \le d_1$ ($d_1$ is given by \eqref{def_d_1}), we let $\mathfrak F_\lambda$ be the real subspace $\mathfrak F_\lambda\subset T_o^\ast N$ generated by the real and the imaginary parts of all one forms $\Phi$ for which the term $\Phi\wedge{\mathcal \epsilon}_a\wedge{\mathcal \epsilon}_b$ with $1\le a,b\le \lambda$ appears on the RHS of \eqref{Form_d2Gamma_ab_real}, \eqref{Form_d2Gamma_ab_prim_real}, \eqref{Form_d2phi_a_real}, \eqref{Form_d2phi_a_prim_real}, \eqref{Form_d2psi_1_real} or \eqref{Form_d2psi_23_real}. Then, the character sequence $v_1, v_2, \dots, v_{d_1}$ of the ideal $\mathcal I$ corresponding to the fixed basis \eqref{new_basis} is given by (cf. Section~\ref{ex_dif_sys}) 
\begin{equation}
\begin{cases}
v_1=0;\\
v_\lambda=\dim(\mathfrak F_\lambda/\mathfrak F_{\lambda-1})=\dim \mathfrak F_\lambda-\dim \mathfrak F_{(\lambda-1)},\ \text{if}\ 2\le \lambda\le d_1
\end{cases}
\end{equation}

\subsection{The characters $v_2,\dots,v_n$} Let us fix an integer number $\pmb\lambda$ between  $2$ and $n$. By definition,  the (real) vector space $\mathfrak F_{\pmb\lambda}$ is generated by the real and imaginary parts of the one-forms
\begin{equation*}
\begin{gathered}
{\mathcal S}^\ast_{\pmb{\alpha\beta\gamma\delta'}}-{\mathcal S}^\ast_{\pmb{\alpha\beta\delta\gamma'}},\qquad
{\mathcal S}^\ast_{\pmb{\alpha\beta'\gamma\delta'}}-{\mathcal S}^\ast_{\pmb{\alpha\beta'\delta\gamma'}},\\
{\mathcal V}^\ast_{\pmb{\alpha\gamma\delta'}}-{\mathcal V}^\ast_{\pmb{\alpha\gamma'\delta}},\qquad
{\mathcal V}^\ast_{\pmb{\alpha'\gamma'\delta}}-{\mathcal V}^\ast_{\pmb{\alpha'\delta'\gamma}},\qquad
{\mathcal L}^\ast_{\pmb{\gamma\delta'}}-{\mathcal L}^\ast_{\pmb{\gamma'\delta}},\qquad
{\mathcal M}^\ast_{\pmb{\gamma'\delta}}-{\mathcal M}^\ast_{\pmb{\gamma\delta'}},
\end{gathered}
\end{equation*}
where $1\ \le\ {\pmb\alpha}, {\pmb\beta}\ \le n$ and $1\ \le\ {\pmb\gamma}, {\pmb\delta}\ \le \pmb\lambda$.

Let us introduce the one-forms
\begin{equation}
X_{\pmb{\alpha\beta\gamma\delta}}\overset{def}{\ =\ }\frac{1}{2}\Big({\mathcal S}^\ast_{\pmb{\alpha\beta\gamma\delta'}}-{\mathcal S}^\ast_{\pmb{\alpha\beta\delta\gamma'}}\Big),\qquad
Y_{\pmb{\alpha\beta\gamma\delta}}\overset{def}{\ =\ }\frac{1}{4}\Big({\mathcal S}^\ast_{\pmb{\alpha\beta\gamma\delta'}}+{\mathcal S}^\ast_{\pmb{\alpha\beta\delta\gamma'}}+{\mathcal S}^\ast_{\pmb{\alpha\gamma\delta\beta'}}+{\mathcal S}^\ast_{\pmb{\beta\gamma\delta\alpha'}}\Big),
\end{equation}
Then, as it can be easily verified, $X_{\pmb{\alpha\beta\gamma\delta}}$ is symmetric in $\pmb{\alpha,\beta}$ and skew-symmetric in $\pmb{\gamma,\delta}$. Furthermore, it has the property
\begin{equation}\label{Bianchi_X4}
X_{\pmb{\alpha\beta\gamma\delta}}+X_{\pmb{\alpha\gamma\delta\beta}}+X_{\pmb{\alpha\delta\beta\gamma}}=0.
\end{equation}
Whereas $Y_{\pmb{\alpha\beta\gamma\delta}}$ is totally symmetric in  $\pmb{\alpha,\beta,\gamma,\delta}$. By a straightforward substitution, one can immediately verify  the identity
\begin{equation}\label{expression_S_prim}
{\mathcal S}^\ast_{\pmb{\alpha\beta\gamma\delta'}}=\frac{1}{2}\Big(X_{\pmb{\alpha\beta\gamma\delta}}+X_{\pmb{\beta\gamma\alpha\delta}}+X_{\pmb{\gamma\alpha\beta\delta}}\Big)+Y_{\pmb{\alpha\beta\gamma\delta}}
\end{equation} 

Next, we will choose a reduced set of generators for the linear space 
\begin{equation}\label{space_X4}
\text{span}\Big\{\mathbb Re(X_{\pmb{\alpha\beta\gamma\delta}}),\ \mathbb Im (X_{\pmb{\alpha\beta\gamma\delta}})\ \Big|\  1\ \le\ {\pmb\alpha}, {\pmb\beta}\ \le n,\  1\ \le\ {\pmb\gamma}, {\pmb\delta}\ \le \pmb\lambda\Big\}
\end{equation}
 considered as a subspace in $\mathfrak F_{\pmb\lambda}/\mathfrak F_{\pmb\lambda-1}$. Notice that by \eqref{Bianchi_X4}, for any $1\le \pmb\alpha,\pmb\beta,\pmb\gamma\le \pmb\lambda -1$, we have
\begin{equation*}
X_{\pmb{\alpha\beta\gamma}\pmb\lambda}+\underbrace{{X}_{\pmb{\alpha}\pmb\lambda{\pmb{\beta\gamma}}}}_{\in \mathfrak F_{(\pmb\lambda-1)}}+X_{\pmb{\alpha\gamma}\pmb\lambda{\pmb\beta}}=0
\end{equation*} 
and thus, modulo $\mathfrak F_{\pmb\lambda-1}$ we have the relation $X_{\pmb{\alpha\beta\gamma}\pmb\lambda}\equiv X_{\pmb{\alpha\gamma\beta}\pmb\lambda}$, i.e., $X_{\pmb{\alpha\beta\gamma}\pmb\lambda}$ is symmetric in $\pmb\alpha,\pmb\beta,\pmb\gamma$, considered as an element of the quotient space $\mathfrak F_{\pmb\lambda}/\mathfrak F_{\pmb\lambda-1}$ . Therefore, 
\begin{equation*}
\text{span}\Big\{\mathbb Re(X_{\pmb{\alpha\beta\gamma\lambda}}),\ \mathbb Im (X_{\pmb{\alpha\beta\gamma\lambda}})\ \Big|\  1\ \le\ {\pmb\alpha}, {\pmb\beta},\pmb\gamma\ \le \pmb\lambda-1\Big\}\subset \frac{\mathfrak F_{\pmb\lambda}}{\mathfrak F_{\pmb\lambda-1}} 
\end{equation*}
 can be generated (over the real numbers) by 
\begin{equation}\label{dim_X4_1}
2{\pmb\lambda+1 \choose{3}}
\end{equation}
elements.

If we consider the index ranges $\pmb\lambda\le \pmb\alpha \le n$, $1\le \pmb\beta,\pmb\gamma\le \pmb\lambda-1$, we have again the identity 
\begin{equation*}
X_{\pmb{\alpha\beta\gamma}\pmb\lambda}+\underbrace{{X}_{\pmb{\alpha}\pmb\lambda{\pmb{\beta\gamma}}}}_{\in \mathfrak F_{(\pmb\lambda-1)}}+X_{\pmb{\alpha\gamma}\pmb\lambda{\pmb\beta}}=0,
\end{equation*}  
and thus  $X_{\pmb{\alpha\beta\gamma}\pmb\lambda}$ is symmetric in $\pmb\beta,\pmb\gamma$  considered as an element of the quotient space $\mathfrak F_{\pmb\lambda}/\mathfrak F_{\pmb\lambda-1}$. In this case, the respective subspace
\begin{equation*}
\text{span}\Big\{\mathbb Re(X_{\pmb{\alpha\beta\gamma\lambda}}),\ \mathbb Im (X_{\pmb{\alpha\beta\gamma\lambda}})\ \Big|\  \pmb\lambda\le \pmb\alpha \le n,\ 1\ \le\ {\pmb\beta},\pmb\gamma\ \le \pmb\lambda-1\Big\}\subset \frac{\mathfrak F_{\pmb\lambda}}{\mathfrak F_{\pmb\lambda-1}} 
\end{equation*}
can by generated by
\begin{equation}\label{dim_X4_2}
2(n-\pmb\lambda+1){\pmb\lambda \choose{2}}
\end{equation}
elements.

Similarly, the subspace
\begin{equation*}
\text{span}\Big\{\mathbb Re(X_{\pmb{\alpha\beta\gamma\lambda}}), \mathbb Im (X_{\pmb{\alpha\beta\gamma\lambda}})\ \Big|\  \pmb\lambda\le \pmb\alpha,\pmb\beta \le n,\ 1\ \le\ \pmb\gamma\ \le \pmb\lambda-1\Big\}\subset \frac{\mathfrak F_{\pmb\lambda}}{\mathfrak F_{\pmb\lambda-1}} 
\end{equation*}
can by generated by
\begin{equation}\label{dim_X4_3}
2(\pmb\lambda-1){n-\pmb\lambda+2 \choose{2}}
\end{equation}
elements.

The sum of the numbers \eqref{dim_X4_1}, \eqref{dim_X4_2} and \eqref{dim_X4_3} gives an upper bound for the dimension of \eqref{space_X4}.

We proceed in a similar fashion with the linear subspace  
\begin{equation}\label{space_S2}
\text{span}\Big\{\mathbb Re\big({\mathcal S}^\ast_{\pmb{\alpha\beta'\gamma\delta'}}-{\mathcal S}^\ast_{\pmb{\alpha\beta'\delta\gamma'}}\big),
\ \mathbb Im\big({\mathcal S}^\ast_{\pmb{\alpha\beta'\gamma\delta'}}-{\mathcal S}^\ast_{\pmb{\alpha\beta'\delta\gamma'}}\big)\ \Big|\  1\ \le\ {\pmb\alpha}, {\pmb\beta}\ \le n,\  1\ \le\ {\pmb\gamma}, {\pmb\delta}\ \le \pmb\lambda\Big\} 
\end{equation}
of ${\mathfrak F_{\pmb\lambda}}/{\mathfrak F_{\pmb\lambda-1}}$. We first introduce some new real one-forms
\begin{equation}
\begin{cases}
R_{\pmb{\alpha\beta\gamma\delta}}\overset{def}{\ =\ }\frac{1}{2}\mathbb Re\Big({\mathcal S}^\ast_{\pmb{\alpha\beta'\gamma\delta'}}-{\mathcal S}^\ast_{\pmb{\alpha\beta'\delta\gamma'}}\Big),\\
T_{\pmb{\alpha\beta\gamma\delta}}\overset{def}{\ =\ }\mathbb Re\Big({\mathcal S}^\ast_{\pmb{\alpha\beta'\gamma\delta'}}+{\mathcal S}^\ast_{\pmb{\gamma\beta'\delta\alpha'}}+{\mathcal S}^\ast_{\pmb{\delta\beta'\alpha\gamma'}}\Big),\\
U_{\pmb{\alpha\beta\gamma\delta}}\overset{def}{\ =\ }\frac{1}{2}\mathbb Im\Big({\mathcal S}^\ast_{\pmb{\alpha\beta'\gamma\delta'}}-{\mathcal S}^\ast_{\pmb{\alpha\beta'\delta\gamma'}}\Big).
\end{cases}
\end{equation}
Then, the properties \eqref{properties_of_S} of ${\mathcal S}^\ast_{\pmb{\alpha\beta'\gamma\delta'}}$ imply that $R_{\pmb{\alpha\beta\gamma\delta}}$ is skew-symmetric with respect to each of the two pairs of indices $\pmb\alpha,\pmb\beta$ and $\pmb\gamma,\pmb\delta$, and satisfies the identities 
\begin{equation}\label{popertiesR}
R_{\pmb{\alpha\beta\gamma\delta}}+R_{\pmb{\gamma\alpha\beta\delta}}+R_{\pmb{\beta\gamma\alpha\delta}}=0, \qquad R_{\pmb{\alpha\beta\gamma\delta}}=R_{\pmb{\gamma\delta\alpha\beta}},
\end{equation}
i.e., it has the algebraic properties of a Riemannian curvature tensor. We have that $T_{\pmb{\alpha\beta\gamma\delta}}$ is totally symmetric, whereas $U_{\pmb{\alpha\beta\gamma\delta}}$ is symmetric in $\pmb\alpha,\pmb\beta$, skew-symmetric in $\pmb\gamma,\pmb\delta$ and satisfies
\begin{equation}
U_{\pmb{\alpha\beta\gamma\delta}}+U_{\pmb{\alpha\delta\beta\gamma}}+U_{\pmb{\alpha\gamma\delta\beta}}=0.
\end{equation}

We have also that
\begin{equation}\label{expression_S_prim2}
\begin{aligned}
\mathbb Re({\mathcal S}^\ast_{\pmb{\alpha\beta'\gamma\delta'}})&=\frac{2}{3}\Big(R_{\pmb{\alpha\beta\gamma\delta}}+R_{\pmb{\alpha\delta\gamma\beta}}\Big)+\frac{1}{3} T_{\pmb{\alpha\beta\gamma\delta}},\\
\mathbb Im({\mathcal S}^\ast_{\pmb{\alpha\beta'\gamma\delta'}})&=\frac{1}{2}\Big(U_{\pmb{\alpha\beta\gamma\delta}}+U_{\pmb{\gamma\beta\alpha\delta}}
+U_{\pmb{\alpha\delta\gamma\beta}}+U_{\pmb{\gamma\delta\alpha\beta}}\Big).
\end{aligned}
\end{equation}

Clearly, the subspace \eqref{space_S2} is generated  by $\Big\{R_{\pmb{\alpha\beta\gamma\delta}}, U_{\pmb{\alpha\beta\gamma\delta}}\ \Big|\  1\ \le\ {\pmb\alpha}, {\pmb\beta}\ \le n,\  1\ \le\ {\pmb\gamma}, {\pmb\delta}\ \le \pmb\lambda\Big\}$. We will next reduce the number of its generators by using the above symmetry properties. 
The dependence of the one-forms $U_{\pmb{\alpha\beta\gamma\delta}}$ on their indices is a subject to the exactly same algebraic relations as that of $X_{\pmb{\alpha\beta\gamma\delta}}$ . Therefore, by \eqref{dim_X4_1}, \eqref{dim_X4_2} and \eqref{dim_X4_3}, the subspace 
\begin{equation*}
\text{span}\Big\{U_{\pmb{\alpha\beta\gamma\delta}}\ \Big|\  1\ \le\ {\pmb\alpha}, {\pmb\beta}\ \le n,\  1\ \le\ {\pmb\gamma}, {\pmb\delta}\ \le \pmb\lambda\Big\}\subset \frac{\mathfrak F_{\pmb\lambda}}{\mathfrak F_{\pmb\lambda-1}} .
\end{equation*}
can be generated by 
\begin{equation}\label{dim_U4}
{\pmb\lambda+1 \choose{3}}+(n-\pmb\lambda+1){\pmb\lambda \choose{2}}+(\pmb\lambda-1){n-\pmb\lambda+2 \choose{2}}
\end{equation}
elements.

If we assume $1\le \pmb\alpha,\pmb\beta,\pmb\gamma\le \pmb\lambda -1$, then the properties \eqref{popertiesR} easily imply
that $R_{\pmb{\alpha\beta\gamma\lambda}}\equiv 0$ modulo $\mathfrak F_{\pmb\lambda-1}$. Let us consider the index ranges 
$\pmb\lambda\le \pmb\alpha \le n$, $1\le \pmb\beta,\pmb\gamma\le \pmb\lambda-1$. We have
\begin{equation*}
R_{\pmb{\alpha\beta\gamma}\pmb\lambda}+\underbrace{{R}_{\pmb{\alpha}\pmb\lambda{\pmb\beta\gamma}}}_{\in \mathfrak F_{(\pmb\lambda-1)}}+R_{\pmb{\alpha\gamma}\pmb\lambda{\pmb\beta}}=0,
\end{equation*} 
and hence, modulo  $\mathfrak F_{\pmb\lambda-1}$, $R_{\pmb{\alpha\beta\gamma}\pmb\lambda}\equiv R_{\pmb{\alpha\gamma\beta}\pmb\lambda}$. Thus 
\begin{equation*}
\text{span}\Big\{R_{\pmb{\alpha\beta\gamma\lambda}}\ \Big|\  \pmb\lambda\le \pmb\alpha \le n,\ 1\ \le\ {\pmb\beta},\pmb\gamma\ \le \pmb\lambda-1\Big\}\subset \frac{\mathfrak F_{\pmb\lambda}}{\mathfrak F_{\pmb\lambda-1}} 
\end{equation*}
can be generated by
\begin{equation}\label{dim_R4_1}
(n-\pmb\lambda+1){\pmb\lambda \choose{2}}
\end{equation}
elements. Whereas
\begin{equation*}
\text{span}\Big\{R_{\pmb{\alpha\beta\gamma\lambda}}\ \Big|\  \pmb\lambda\le \pmb\alpha,\pmb\beta \le n,\ 1\ \le\ \pmb\gamma\ \le \pmb\lambda-1\Big\}\subset \frac{\mathfrak F_{\pmb\lambda}}{\mathfrak F_{\pmb\lambda-1}} 
\end{equation*}
can by generated by
\begin{equation}\label{dim_R4_2}
(\pmb\lambda-1){n-\pmb\lambda+1 \choose{2}}
\end{equation}
elements, since $R_{\pmb{\alpha\beta\gamma}\pmb\lambda}= -R_{\pmb{\beta\alpha\gamma}\pmb\lambda}$. 
Therefore, the dimension of \eqref{space_S2} is less or equal to the sum of \eqref{dim_U4}, \eqref{dim_R4_1} and \eqref{dim_R4_2}.

Similarly, the linear span in ${\mathfrak F_{\pmb\lambda}}/{\mathfrak F_{\pmb\lambda-1}}$ of the real and imaginary parts of the one-forms 
\begin{equation*}
\Big\{{\mathcal V}^\ast_{\pmb{\alpha\beta\gamma'}}-{\mathcal V}^\ast_{\pmb{\alpha\gamma\beta'}},\ 
{\mathcal V}^\ast_{\pmb{\alpha'\beta'\gamma}}-{\mathcal V}^\ast_{\pmb{\alpha'\gamma'\beta}}\ \Big|\ 1\ \le\ {\pmb\alpha}\ \le n,\  1\ \le\ {\pmb\beta}, {\pmb\gamma}\ \le \pmb\lambda\Big\} 
\end{equation*}
 can be generated by
\begin{equation}\label{dim_V3_1}
4{\pmb\lambda \choose{2}}+4(n-\pmb\lambda+1)(\pmb\lambda-1)
\end{equation}
elements. Whereas for the linear span of the real and imaginary parts of
\begin{equation*}
\Big\{{\mathcal L}^\ast_{\pmb{\alpha\beta'}}-{\mathcal L}^\ast_{\pmb{\beta\alpha'}},\ 
{\mathcal M}^\ast_{\pmb{\alpha\beta'}}-{\mathcal M}^\ast_{\pmb{\beta\alpha'}}\ \Big|\ 1\ \le\ {\pmb\alpha}\ \le n,\  1\ \le\ {\pmb\beta}, {\pmb\gamma}\ \le \pmb\lambda\Big\} 
\end{equation*}
in ${\mathfrak F_{\pmb\lambda}}/{\mathfrak F_{\pmb\lambda-1}}$, we need only
\begin{equation}\label{dim_LM2_1}
3(\pmb\lambda-1)
\end{equation}
generators (notice that by \eqref{properties_of_L}, the imaginary part of ${\mathcal L}^\ast_{\pmb{\alpha\beta'}}-{\mathcal L}^\ast_{\pmb{\beta\alpha'}}$ vanishes).

The sum of \eqref{dim_X4_1}, \eqref{dim_X4_2}, \eqref{dim_X4_3}, \eqref{dim_U4}, \eqref{dim_R4_1}, \eqref{dim_R4_2}, \eqref{dim_V3_1} and \eqref{dim_LM2_1}
gives an upper bound for the dimension of ${\mathfrak F_{\pmb\lambda}}/{\mathfrak F_{\pmb\lambda-1}}$, i.e., we have
\begin{equation*}
\begin{gathered}
\dim\Big(\frac{\mathfrak F_{\pmb\lambda}}{\mathfrak F_{\pmb\lambda-1}}\Big) \ \pmb\le\ 3{\pmb\lambda+1 \choose{3}}\ +\ 3(n-\pmb\lambda+1){\pmb\lambda \choose{2}}\\
+\ 3(\pmb\lambda-1){n-\pmb\lambda+2 \choose{2}}
\ +\ (n-\pmb\lambda+1){\pmb\lambda \choose{2}}\ +\ (\pmb\lambda-1){n-\pmb\lambda+1 \choose{2}}\\
+\ 4{\pmb\lambda \choose{2}}\ +\ 4(n-\pmb\lambda+1)(\pmb\lambda-1)
\ +\ 3(\lambda-1)\\ 
=\frac{1}{2}(\pmb\lambda-1)(\pmb\lambda-2n-4)(\pmb\lambda-2n-5).
\end{gathered}
\end{equation*}

The vector space $\mathfrak F_{n}$ is freely generated by the real and imaginary parts of all the one-forms (modulo their respective symmetries)  
\begin{equation*}
\begin{gathered}
X_{\pmb{\alpha\beta\gamma\delta}},\qquad R_{\pmb{\alpha\beta\gamma\delta}},\qquad U_{\pmb{\alpha\beta\gamma\delta}}\\
{\mathcal V}^\ast_{\pmb{\alpha\beta\gamma'}}-{\mathcal V}^\ast_{\pmb{\alpha\gamma\beta'}},\qquad
{\mathcal V}^\ast_{\pmb{\alpha'\beta'\gamma}}-{\mathcal V}^\ast_{\pmb{\alpha'\gamma'\beta}},\qquad
{\mathcal L}^\ast_{\pmb{\alpha\beta'}}-{\mathcal L}^\ast_{\pmb{\beta\alpha'}},\qquad
{\mathcal M}^\ast_{\pmb{\alpha\beta'}}-{\mathcal M}^\ast_{\pmb{\beta\alpha'}}
\end{gathered}
\end{equation*}
and hence we can easily compute its dimension,
\begin{multline*}
\dim(\mathfrak F_n)= \underbrace{\left[{\frac{n(n-1)}{2}+1\choose 2}-{n\choose 4}\right]}_{\text{this is for $\{R_{\pmb{\alpha\beta\gamma\delta}}\}$}}\ 
+\underbrace{3{\frac{n(n+1)}{2}\choose 2}}_{\text{$\{X_{\pmb{\alpha\beta\gamma\delta}}$, $U_{\pmb{\alpha\beta\gamma\delta}}\}$}}\ 
+\ \underbrace{4\left[n{n+1\choose 2}-{n+2\choose 3}\right]}_{\text{$\{{\mathcal V}^\ast_{\pmb{\alpha\beta\gamma'}}-{\mathcal V}^\ast_{\pmb{\alpha\gamma\beta'}}$, ${\mathcal V}^\ast_{\pmb{\alpha'\beta'\gamma}}-{\mathcal V}^\ast_{\pmb{\alpha'\gamma'\beta}}\}$}}\\ 
+\ \underbrace{3{n\choose 2}}_{\text{$\{{\mathcal L}^\ast_{\pmb{\alpha\beta'}}-{\mathcal L}^\ast_{\pmb{\beta\alpha'}}$, ${\mathcal M}^\ast_{\pmb{\alpha\beta'}}-{\mathcal M}^\ast_{\pmb{\beta\alpha'}}\}$}}\ 
=\ \frac{1}{24}n(n-1)(11n^2+61n+86).
\end{multline*}

By construction $0=\mathfrak F_1\subset \mathfrak F_2\subset\dots\subset \mathfrak F_n$ and thus
\begin{equation*}
\mathfrak F_n\cong \Big(\frac{\mathfrak F_{2}}{\mathfrak F_{1}}\Big) \oplus \Big(\frac{\mathfrak F_{3}}{\mathfrak F_{2}}\Big)\oplus\dots\oplus\Big(\frac{\mathfrak F_{n}}{\mathfrak F_{n-1}}\Big).  
\end{equation*}
Therefore we can calculate (by using, for example, some of the computer algebra systems) that
\begin{multline*}
\dim(\mathfrak F_n)=\sum_{\pmb\lambda=2}^n\dim\Big(\frac{\mathfrak F_{\pmb\lambda}}{\mathfrak F_{\pmb\lambda-1}}\Big)\ \pmb\le\  
\sum_{\pmb\lambda=2}^n\Big(\frac{1}{2}(\pmb\lambda-1)(\pmb\lambda-2n-4)(\pmb\lambda-2n-5)\Big)\\
=\ \frac{1}{24}n(n-1)(11n^2+61n+86),
\end{multline*}
which implies that the above inequality must actually be an equality, i.e., we have shown
\begin{equation}
v_{\pmb\lambda}=\dim\Big(\frac{\mathfrak F_{\pmb\lambda}}{\mathfrak F_{\pmb\lambda-1}}\Big)=\frac{1}{2}(\pmb\lambda-1)(\pmb\lambda-2n-4)(\pmb\lambda-2n-5),\qquad 2\le \pmb\lambda\le n.
\end{equation}

\subsection{The characters $v_{(n+1)},\dots,v_{2n}$}
Notice that, modulo $\mathfrak F_n$, we have (cf. \eqref{expression_S_prim}, \eqref{expression_S_prim2})
\begin{equation*}
{\mathcal S}^\ast_{\pmb{\alpha\beta\gamma\delta'}}\equiv Y_{\pmb{\alpha\beta\gamma\delta}},\qquad 
{\mathcal S}^\ast_{\pmb{\alpha\beta'\gamma\delta'}}\equiv \frac{1}{3}T_{\pmb{\alpha\beta\gamma\delta}}
\end{equation*}
and that each of the arrays
\begin{equation*}
Y_{\pmb{\alpha\beta\gamma\delta}},\quad T_{\pmb{\alpha\beta\gamma\delta}},\quad{\mathcal V}^\ast_{\pmb{\alpha\beta\gamma'}}, \quad {\mathcal V}^\ast_{\pmb{\alpha\beta'\gamma'}},\quad {\mathcal L}^\ast_{\pmb{\alpha\beta'}},\quad
{\mathcal M}^\ast_{\pmb{\alpha\beta'}}
\end{equation*}
depends totally symmetrically on its indices.

Let us fix $\pmb\lambda$ to be an integer number between $1$ and $n$. By definition, the quotient space $\mathfrak F_{(n+\pmb\lambda)}/\mathfrak F_n$ is generated by the real and imaginary parts of the one-forms:
\begin{multline*}
A_{\pmb{\alpha\beta\gamma\delta}}\overset{def}{\ =\ }i{\mathcal S}^\ast_{\pmb{\alpha\beta\gamma\delta'}}+i{\mathcal S}^\ast_{\pmb{\alpha\beta\delta\gamma'}}+{\mathcal S}^\ast_{\pmb{\alpha\beta\gamma}[{\pmb{\delta}}+1]}+{\mathcal S}^\ast_{\pmb{\alpha\beta\gamma'}[{\pmb{\delta}+1}]\pmb'}\ \equiv\ 
2i Y_{\pmb{\alpha\beta\gamma\delta}}+{\mathcal S}^\ast_{\pmb{\alpha\beta\gamma}[{\pmb{\delta}}+1]}+\frac{1}{3}T_{\pmb{\alpha\beta\gamma}[{\pmb{\delta}}+1]},\\
1\le \pmb\alpha,\pmb\beta,\pmb\gamma\le n,\qquad 1\le \pmb\delta\le \pmb\lambda;
\end{multline*}
\begin{multline*}
B_{\pmb{\alpha\beta\gamma\delta}}\overset{def}{\ =\ }-i\Big({\mathcal S}^\ast_{\pmb{\alpha\beta\gamma\delta'}}-{\mathcal S}^\ast_{\pmb{\alpha\beta\delta\gamma'}}+i{\mathcal S}^\ast_{\pmb{\alpha\beta\gamma'}[{\pmb{\delta}+1}]\pmb'}-i{\mathcal S}^\ast_{\pmb{\alpha\beta\delta'}[{\pmb{\gamma}+1}]\pmb'}-i{\mathcal S}^\ast_{\pmb{\alpha\beta\gamma}[{\pmb{\delta}}+1]}+i{\mathcal S}^\ast_{\pmb{\alpha\beta\delta}[{\pmb{\gamma}}+1]}
+{\mathcal S}^\ast_{\pmb{\alpha\beta}[{\pmb{\gamma}}+1][{\pmb{\delta}+1}]\pmb'}-{\mathcal S}^\ast_{\pmb{\alpha\beta}[{\pmb{\delta}}+1][{\pmb{\gamma}+1}]\pmb'}\Big)\\ \equiv\ 
\frac{1}{3}T_{\pmb{\alpha\beta\gamma}[{\pmb\delta}+1]}-\frac{1}{3}T_{\pmb{\alpha\beta\delta}[{\pmb\gamma}+1]}
-{\mathcal S}^\ast_{\pmb{\alpha\beta\gamma}[{\pmb\delta}+1]}+{\mathcal S}^\ast_{\pmb{\alpha\beta\delta}[{\pmb\gamma}+1]},\qquad
1\le \pmb\alpha,\pmb\beta\le n,\qquad 1\le \pmb\gamma,\pmb\delta\le \pmb\lambda;
\end{multline*}
\begin{multline*}
C_{\pmb{\alpha\beta\gamma\delta}}\overset{def}{\ =\ }-\frac{i}{2}\Big(i{\mathcal S}^\ast_{\pmb{\alpha\beta'\gamma\delta'}}+i{\mathcal S}^\ast_{\pmb{\alpha\beta'\delta\gamma'}}+{\mathcal S}^\ast_{\pmb{\alpha\beta'\gamma}[{\pmb{\delta}}+1]}-\overline{{\mathcal S}^\ast_{\pmb{\alpha'\beta\gamma}[{\pmb{\delta}+1}]}}\Big)\ \equiv\ 
\frac{1}{3}T_{\pmb{\alpha\beta\gamma\delta}}+\mathbb Im \big(Y_{\pmb{\alpha\beta\gamma}[{\pmb\delta}+1]}\big),\\
1\le \pmb\alpha,\pmb\beta,\pmb\gamma\le n,\qquad 1\le \pmb\delta\le \pmb\lambda;
\end{multline*}
\begin{multline*}
D_{\pmb{\alpha\beta\gamma\delta}}\overset{def}{\ =\ }-i\Big({\mathcal S}^\ast_{\pmb{\alpha\beta'\gamma\delta'}}-{\mathcal S}^\ast_{\pmb{\alpha\beta'\delta\gamma'}}
-i\overline{{\mathcal S}^\ast_{\pmb{\alpha'\beta\gamma}[{\pmb{\delta}+1}]}}+i\overline{{\mathcal S}^\ast_{\pmb{\alpha'\beta\delta}[{\pmb{\gamma}+1}]}}-i{\mathcal S}^\ast_{\pmb{\alpha\beta'\gamma}[{\pmb{\delta}}+1]}+i{\mathcal S}^\ast_{\pmb{\alpha\beta'\delta}[{\pmb{\gamma}}+1]}
+{\mathcal S}^\ast_{\pmb{\alpha\beta'}[{\pmb{\gamma}}+1][{\pmb{\delta}+1}]\pmb'}-{\mathcal S}^\ast_{\pmb{\alpha\beta'}[{\pmb{\delta}}+1][{\pmb{\gamma}+1}]\pmb'}\Big)\\ \equiv\ 
-\mathbb Re \big(Y_{\pmb{\alpha\beta\gamma}[{\pmb\delta}+1]}\big)+\mathbb Re \big(Y_{\pmb{\alpha\beta\delta}[{\pmb\gamma}+1]}\big),\qquad
1\le \pmb\alpha,\pmb\beta\le n,\qquad 1\le \pmb\gamma,\pmb\delta\le \pmb\lambda;
\end{multline*}
\begin{multline*}
A_{\pmb{\alpha\beta\gamma}}\overset{def}{\ =\ }i\Big({\mathcal V}^\ast_{\pmb{\alpha\beta\gamma'}}+{\mathcal V}^\ast_{\pmb{\alpha\beta'\gamma}}-i{\mathcal V}^\ast_{\pmb{\alpha\beta}[{\pmb{\gamma}}+1]}
 -i{\mathcal V}^\ast_{\pmb{\alpha\beta'}[{\pmb{\gamma}+1}]\pmb'}\Big)\ \equiv\ 
2i {\mathcal V}^\ast_{\pmb{\alpha\beta\gamma'}}+{\mathcal V}^\ast_{\pmb{\alpha\beta}[{\pmb{\gamma}}+1]}+{\mathcal V}^\ast_{\pmb{\alpha\beta'}[{\pmb{\gamma}}+1]\pmb'},\\
1\le \pmb\alpha,\pmb\beta,\le n,\qquad 1\le \pmb\gamma\le \pmb\lambda;
\end{multline*}
\begin{multline*}
B_{\pmb{\alpha\beta\gamma}}\overset{def}{\ =\ }-i{\mathcal V}^\ast_{\pmb{\alpha\beta\gamma'}}+i{\mathcal V}^\ast_{\pmb{\alpha\beta'\gamma}}
+{\mathcal V}^\ast_{\pmb{\alpha\beta'}[{\pmb{\gamma}+1}]\pmb'}-{\mathcal V}^\ast_{\pmb{\alpha\gamma'}[{\pmb{\beta}+1}]\pmb'}
-{\mathcal V}^\ast_{\pmb{\alpha\beta}[{\pmb{\gamma}}+1]}+{\mathcal V}^\ast_{\pmb{\alpha\gamma}[{\pmb{\beta}}+1]}
-i{\mathcal V}^\ast_{\pmb{\alpha}[{\pmb{\beta}}+1][{\pmb{\gamma}+1}]\pmb'}+i{\mathcal V}^\ast_{\pmb{\alpha}[{\pmb{\gamma}}+1][{\pmb{\beta}+1}]\pmb'}\\ \equiv\ 
{\mathcal V}^\ast_{\pmb{\alpha\beta'}[{\pmb\gamma}+1]\pmb'}-{\mathcal V}^\ast_{\pmb{\alpha\gamma'}[{\pmb\beta}+1]\pmb'}
-{\mathcal V}^\ast_{\pmb{\alpha\beta}[{\pmb\gamma}+1]}+{\mathcal V}^\ast_{\pmb{\alpha\gamma}[{\pmb\beta}+1]},\qquad
1\le \pmb\alpha\le n,\qquad 1\le \pmb\beta,\pmb\gamma\le \pmb\lambda;
\end{multline*}
\begin{multline*}
C_{\pmb{\alpha\beta\gamma}}\overset{def}{\ =\ }{\mathcal V}^\ast_{\pmb{\alpha'\beta'\gamma}}+{\mathcal V}^\ast_{\pmb{\alpha'\gamma'\beta}}-i{\mathcal V}^\ast_{\pmb{\alpha'\beta'}[{\pmb{\gamma}}+1]\pmb'}
 -i{\mathcal V}^\ast_{\pmb{\alpha'\beta}[{\pmb{\gamma}+1}]}\ \equiv\ 
2{\mathcal V}^\ast_{\pmb{\alpha\beta'\gamma'}}-i{\mathcal V}^\ast_{\pmb{\alpha'\beta'}[{\pmb{\gamma}+1}]\pmb'}-i{\mathcal V}^\ast_{\pmb{\alpha'\beta}[{\pmb{\gamma}+1}]},\\
1\le \pmb\alpha,\pmb\beta,\le n,\qquad 1\le \pmb\gamma\le \pmb\lambda;
\end{multline*}
\begin{multline*}
D_{\pmb{\alpha\beta\gamma}}\overset{def}{\ =\ }i{\mathcal V}^\ast_{\pmb{\alpha'\beta'\gamma}}-i{\mathcal V}^\ast_{\pmb{\alpha'\gamma'\beta}}
+{\mathcal V}^\ast_{\pmb{\alpha'\beta'}[{\pmb{\gamma}+1}]\pmb'}-{\mathcal V}^\ast_{\pmb{\alpha'\gamma'}[{\pmb{\beta}+1}]\pmb'}
-{\mathcal V}^\ast_{\pmb{\alpha'\beta}[{\pmb{\gamma}}+1]}+{\mathcal V}^\ast_{\pmb{\alpha'\gamma}[{\pmb{\beta}}+1]}
+i{\mathcal V}^\ast_{\pmb{\alpha'}[{\pmb{\beta}}+1]\pmb'[{\pmb{\gamma}+1}]}-i{\mathcal V}^\ast_{\pmb{\alpha'}[{\pmb{\gamma}}+1]\pmb'[{\pmb{\beta}+1}]}\\ \equiv\ 
{\mathcal V}^\ast_{\pmb{\alpha'\beta'}[{\pmb{\gamma}+1}]\pmb'}-{\mathcal V}^\ast_{\pmb{\alpha'\gamma'}[{\pmb{\beta}+1}]\pmb'}
-{\mathcal V}^\ast_{\pmb{\alpha'\beta}[{\pmb{\gamma}}+1]}+{\mathcal V}^\ast_{\pmb{\alpha'\gamma}[{\pmb{\beta}}+1]},\qquad
1\le \pmb\alpha\le n,\qquad 1\le \pmb\beta,\pmb\gamma\le \pmb\lambda;
\end{multline*}
\begin{multline*}
A_{\pmb{\alpha\beta}}\overset{def}{\ =\ }\frac{1}{2}\Big(i{\mathcal L}^\ast_{\pmb{\alpha\beta'}}+i{\mathcal L}^\ast_{\pmb{\alpha'\beta}}+\overline{{\mathcal L}^\ast_{\pmb{\alpha}[{\pmb{\beta}}+1]}}
 +{\mathcal L}^\ast_{\pmb{\alpha}[{\pmb{\beta}+1}]}\Big)\ \equiv\ 
i {\mathcal L}^\ast_{\pmb{\alpha\beta'}}+\mathbb Re\big({\mathcal L}^\ast_{\pmb{\alpha}[{\pmb{\beta}}+1]}\big),\qquad
1\le \pmb\alpha,\le n,\quad 1\le \pmb\beta\le \pmb\lambda;
\end{multline*}
\begin{multline*}
B_{\pmb{\alpha\beta}}\overset{def}{\ =\ }\frac{1}{2}\Big({\mathcal L}^\ast_{\pmb{\alpha\beta'}}-{\mathcal L}^\ast_{\pmb{\alpha'\beta}}
-i{\mathcal L}^\ast_{\pmb{\alpha}[{\pmb{\beta}+1}]}+i{\mathcal L}^\ast_{\pmb{\beta}[{\pmb{\alpha}+1}]}
+i\overline{{\mathcal L}^\ast_{\pmb{\alpha}[{\pmb{\beta}}+1]}}-i\overline{{\mathcal L}^\ast_{\pmb{\beta}[{\pmb{\alpha}}+1]}}
-{\mathcal L}^\ast_{[{\pmb{\alpha}}+1]\pmb'[{\pmb{\beta}+1}]}+{\mathcal L}^\ast_{[{\pmb{\beta}}+1]\pmb'[{\pmb{\alpha}+1}]}\Big)\\ \equiv\ 
\mathbb Im\big({\mathcal L}^\ast_{\pmb{\alpha}[{\pmb\beta}+1]}\big)-\mathbb Im\big({\mathcal L}^\ast_{\pmb{\beta}[{\pmb\alpha}+1]}\big),\qquad
1\le \pmb\alpha, \pmb\beta\le \pmb\lambda;
\end{multline*}
\begin{multline*}
C_{\pmb{\alpha\beta}}\overset{def}{\ =\ }{\mathcal M}^\ast_{\pmb{\alpha'\beta}}+{\mathcal M}^\ast_{\pmb{\alpha\beta'}}
-i{\mathcal M}^\ast_{\pmb{\alpha'}[{\pmb{\beta}}+1]\pmb'}
 -i{\mathcal M}^\ast_{\pmb{\alpha}[{\pmb{\beta}+1}]}\ \equiv\ 
2{\mathcal M}^\ast_{\pmb{\alpha\beta'}}-i{\mathcal M}^\ast_{\pmb{\alpha'}[{\pmb{\beta}}+1]\pmb'}
 -i{\mathcal M}^\ast_{\pmb{\alpha}[{\pmb{\beta}+1}]},\\
1\le \pmb\alpha,\le n,\quad 1\le \pmb\beta\le \pmb\lambda;
\end{multline*}
\begin{multline*}
D_{\pmb{\alpha\beta}}\overset{def}{\ =\ }-i{\mathcal M}^\ast_{\pmb{\alpha'\beta}}+i{\mathcal M}^\ast_{\pmb{\alpha\beta'}}
+{\mathcal M}^\ast_{\pmb{\alpha}[{\pmb{\beta}+1}]}-{\mathcal M}^\ast_{\pmb{\beta}[{\pmb{\alpha}+1}]}
-{\mathcal M}^\ast_{\pmb{\alpha'}[{\pmb{\beta}}+1]'}+{\mathcal M}^\ast_{\pmb{\beta'}[{\pmb{\alpha}}+1]'}
-i{\mathcal M}^\ast_{[{\pmb{\alpha}}+1]\pmb'[{\pmb{\beta}+1}]}+i{\mathcal M}^\ast_{[{\pmb{\beta}}+1]\pmb'[{\pmb{\alpha}+1}]}\\ \equiv\ 
{\mathcal M}^\ast_{\pmb{\alpha}[{\pmb{\beta}+1}]}-{\mathcal M}^\ast_{\pmb{\beta}[{\pmb{\alpha}+1}]}
-{\mathcal M}^\ast_{\pmb{\alpha'}[{\pmb{\beta}}+1]'}+{\mathcal M}^\ast_{\pmb{\beta'}[{\pmb{\alpha}}+1]'},\qquad
1\le \pmb\alpha, \pmb\beta\le \pmb\lambda.
\end{multline*}

\begin{lemma}
The linear space $\mathfrak F_{(n+\pmb\lambda)}/\mathfrak F_{{(n+\pmb\lambda-1)}}$ is generated (over the real numbers) by the real and imaginary parts of the one-forms
\begin{equation}\label{As_and_Cs}
\Big\{A_{\pmb{\alpha\beta\gamma\lambda}},\ A_{\pmb{\alpha\beta\lambda}},\ A_{\pmb{\alpha\lambda}}, \ C_{\pmb{\alpha\beta\gamma\lambda}},\ 
C_{\pmb{\alpha\beta\lambda}},\ C_{\pmb{\alpha\lambda}}\ \Big|\ 1\le \pmb\alpha,\pmb\beta,\pmb\gamma\le n\Big\}\cup\Big\{B_{\pmb{\alpha\lambda}},\ D_{\pmb{\alpha\lambda}}\Big|\ 1\le \alpha\le\lambda\Big\}.
\end{equation}
\end{lemma}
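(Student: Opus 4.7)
My plan is to identify $\mathfrak F_{(n+\pmb\lambda)}/\mathfrak F_{(n+\pmb\lambda-1)}$ with the span of the coefficient one-forms of two-forms of type $\epsilon^a\wedge\zeta^{\pmb\lambda}$, $1\le a\le n+\pmb\lambda-1$, appearing on the RHS of the structure equations \eqref{Form_d2Gamma_ab_real}--\eqref{Form_d2psi_23_real}, and then to shrink the resulting spanning set to the one asserted in the lemma using algebraic identities that are valid modulo $\mathfrak F_{(n+\pmb\lambda-1)}$.

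Reading the $\xi^{\pmb\gamma}\wedge\zeta^{\pmb\lambda}$- and $\zeta^{\pmb\gamma}\wedge\zeta^{\pmb\lambda}$-coefficients directly off the six structure equations shows that the quotient is spanned by the real and imaginary parts of all the one-forms $A_{\pmb{\alpha\beta\gamma\lambda}}$, $B_{\pmb{\alpha\beta\gamma\lambda}}$, $C_{\pmb{\alpha\beta\gamma\lambda}}$, $D_{\pmb{\alpha\beta\gamma\lambda}}$ (with $\pmb\gamma$ restricted to $1\le\pmb\gamma\le\pmb\lambda-1$ in the $B$, $D$ cases by antisymmetry in $\pmb\gamma,\pmb\delta$) together with the corresponding 3-indexed and 2-indexed families. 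The task is then to eliminate the 3- and 4-indexed $B$'s and $D$'s in favour of the $A$'s and $C$'s.

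To this end, I exploit the congruences stated just before the lemma which express each of $A$, $B$, $C$, $D$ modulo $\mathfrak F_n$ in terms of the totally symmetric objects $T_{\pmb{\alpha\beta\gamma\delta}}$, $Y_{\pmb{\alpha\beta\gamma\delta}}$ (arising from the decomposition of $\mathcal S^\ast$) and of $\mathcal V^\ast$. Because $T\in\mathfrak F_n$ and $Y$ is totally symmetric in its four indices, antisymmetrizing the $A$-congruence in its last two indices gives
\begin{equation*}
B_{\pmb{\alpha\beta\gamma\delta}}\ \equiv\ A_{\pmb{\alpha\beta\delta\gamma}}-A_{\pmb{\alpha\beta\gamma\delta}}\pmod{\mathfrak F_n}.
\end{equation*}
Specializing to $\pmb\delta=\pmb\lambda$ with $\pmb\gamma<\pmb\lambda$, the form $A_{\pmb{\alpha\beta\lambda\gamma}}$ is a coefficient of $\xi^{\pmb\lambda}\wedge\zeta^{\pmb\gamma}$ and therefore lies in $\mathfrak F_{(n+\pmb\gamma)}\subseteq\mathfrak F_{(n+\pmb\lambda-1)}$, yielding $B_{\pmb{\alpha\beta\gamma\lambda}}\equiv -A_{\pmb{\alpha\beta\gamma\lambda}}\pmod{\mathfrak F_{(n+\pmb\lambda-1)}}$. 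A parallel manipulation, using the total symmetry of $Y$ and combining the $A$- and $C$-congruences, rewrites $D_{\pmb{\alpha\beta\gamma\lambda}}$ as a linear combination of the listed $A$'s and $C$'s modulo $\mathfrak F_{(n+\pmb\lambda-1)}$, and the total symmetry of $\mathcal V^\ast$ in its three indices yields the analogous reduction for the 3-indexed $B$'s and $D$'s.

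Finally, the 2-indexed $B_{\pmb{\alpha\beta}}$ and $D_{\pmb{\alpha\beta}}$ are antisymmetric in $\pmb\alpha,\pmb\beta$ directly from their defining formulae, so the only new independent generators contributed at the $\pmb\lambda$-th step are $B_{\pmb{\alpha\lambda}}$ and $D_{\pmb{\alpha\lambda}}$ with $1\le\pmb\alpha\le\pmb\lambda$, matching the lemma. The principal obstacle will be the sign-and-reality bookkeeping: because $A$, $B$, $C$, $D$ are complex one-forms, the elimination has to be performed separately for their real and imaginary parts, using the reality relations \eqref{properties_of_S}--\eqref{properties_of_L} to match the shifted $\mathcal S^\ast$- and $\mathcal V^\ast$-terms that arise on both sides of each reduction identity.
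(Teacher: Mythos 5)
Your overall strategy---reading the new generators of $\mathfrak F_{(n+\pmb\lambda)}/\mathfrak F_{(n+\pmb\lambda-1)}$ off the coefficients of $\epsilon^a\wedge\zeta^{\pmb\lambda}$ and then eliminating the $B$'s and $D$'s in favour of the $A$'s and $C$'s via the congruences stated before the lemma---is the same as the paper's. But your key elimination step rests on a false premise: you assert that $T\in\mathfrak F_n$. The space $\mathfrak F_n$ contains only the antisymmetrized combinations $R_{\pmb{\alpha\beta\gamma\delta}}$ and $U_{\pmb{\alpha\beta\gamma\delta}}$ extracted from ${\mathcal S}^\ast_{\pmb{\alpha\beta'\gamma\delta'}}$, whereas $T_{\pmb{\alpha\beta\gamma\delta}}$ is exactly the totally symmetric residue that survives: the paper records ${\mathcal S}^\ast_{\pmb{\alpha\beta'\gamma\delta'}}\equiv\tfrac13 T_{\pmb{\alpha\beta\gamma\delta}}$ modulo $\mathfrak F_n$, and this residue is nontrivial. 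Consequently your congruence $B_{\pmb{\alpha\beta\gamma\delta}}\equiv A_{\pmb{\alpha\beta\delta\gamma}}-A_{\pmb{\alpha\beta\gamma\delta}}\pmod{\mathfrak F_n}$ is off by the term $\tfrac23\big(T_{\pmb{\alpha\beta\gamma}[\pmb\delta+1]}-T_{\pmb{\alpha\beta\delta}[\pmb\gamma+1]}\big)$, which does not lie in $\mathfrak F_n$ and which the total symmetry of $T$ alone cannot remove, because the obstruction involves the index shift $[\,\cdot+1]$ rather than a permutation of slots.

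To kill that term you must also invoke the vanishing of the $C_{\pmb{\alpha\beta\gamma\delta}}$'s modulo the candidate space: $C\equiv0$ gives $\tfrac13T_{\pmb{\alpha\beta\gamma\delta}}\equiv-\mathbb Im\big(Y_{\pmb{\alpha\beta\gamma}[\pmb\delta+1]}\big)$, and only by combining this with the total symmetry of both $T$ and $Y$ does one obtain the shift identities $T_{\pmb{\alpha\beta}[\pmb\gamma+1]\pmb\delta}\equiv T_{\pmb{\alpha\beta\gamma}[\pmb\delta+1]}$ and ${\mathcal S}^\ast_{\pmb{\alpha\beta}[\pmb\gamma+1]\pmb\delta}\equiv{\mathcal S}^\ast_{\pmb{\alpha\beta\gamma}[\pmb\delta+1]}$ of \eqref{Properties_mod_U}, which are what actually annihilate $B_{\pmb{\alpha\beta\gamma\lambda}}$. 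This is precisely the paper's route: it works modulo $\mathfrak U_{\pmb\lambda}$, defined as $\mathfrak F_{(n+\pmb\lambda-1)}$ plus the span of the asserted generators, derives \eqref{Properties_mod_U} from $A\equiv C\equiv0$ there, and only then concludes $B_{\pmb{\alpha\beta\gamma\lambda}}\equiv D_{\pmb{\alpha\beta\gamma\lambda}}\equiv0$. The same gap recurs for the three-index forms: total symmetry of ${\mathcal V}^\ast_{\alpha\beta\gamma}$ over the range $1,\dots,2n$ does not by itself yield ${\mathcal V}^\ast_{\pmb{\beta}[\pmb\gamma+1]\pmb\delta}\equiv{\mathcal V}^\ast_{\pmb{\beta\gamma}[\pmb\delta+1]}$; you again need the vanishing of both $A_{\pmb{\alpha\beta\gamma}}$ and $C_{\pmb{\alpha\beta\gamma}}$. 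Your reduction is salvageable, but only after replacing ``modulo $\mathfrak F_n$'' by ``modulo $\mathfrak F_{(n+\pmb\lambda-1)}$ plus the span of the listed $A$'s and $C$'s'' and carrying out the shift argument in that quotient.
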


Furthermore, modulo $\mathfrak F_{{(n+\pmb\lambda)}}$, for any $1\le \pmb\alpha,\pmb\beta\le n,\  1\le \pmb\gamma,\pmb\delta\le \pmb\lambda$, we have the identities
\begin{equation}\label{Properties_mod_U}
\begin{gathered}
Y_{\pmb{\alpha\beta}[{\pmb\gamma}+1]{\pmb\delta}}\equiv Y_{\pmb{\alpha\beta\gamma}[{\pmb\delta}+1]},\qquad {\mathcal S}^\ast_{\pmb{\alpha\beta}[{\pmb\gamma}+1]{\pmb\delta}}\equiv {\mathcal S}^\ast_{\pmb{\alpha\beta\gamma}[{\pmb\delta}+1]},\qquad 
T_{\pmb{\alpha\beta}[{\pmb\gamma}+1]{\pmb\delta}}\equiv T_{\pmb{\alpha\beta\gamma}[{\pmb\delta}+1]},\\
{\mathcal V}^\ast_{\pmb{\beta}[{\pmb\gamma}+1]{\pmb\delta}}\equiv {\mathcal V}^\ast_{\pmb{\beta\gamma}[{\pmb\delta}+1]},\qquad
{\mathcal V}^\ast_{\pmb{\beta'}[{\pmb\gamma}+1]{\pmb\delta}}\equiv {\mathcal V}^\ast_{\pmb{\beta'\gamma}[{\pmb\delta}+1]},\qquad
{\mathcal V}^\ast_{\pmb{\beta'}[{\pmb\gamma}+1]\pmb'{\pmb\delta}}\equiv {\mathcal V}^\ast_{\pmb{\beta'\gamma'}[{\pmb\delta}+1]}\\
{\mathcal M}^\ast_{[{\pmb\gamma}+1]{\pmb\delta}}\equiv {\mathcal M}^\ast_{\pmb{\gamma}[{\pmb\delta}+1]},\qquad
{\mathcal M}^\ast_{[{\pmb\gamma}+1]\pmb'{\pmb\delta}}\equiv {\mathcal M}^\ast_{\pmb{\gamma'}[{\pmb\delta}+1]},\qquad
{\mathcal M}^\ast_{[{\pmb\gamma}+1]\pmb'{\pmb{\delta'}}}\equiv {\mathcal M}^\ast_{\pmb{\gamma'}[{\pmb\delta}+1]\pmb'},\\
{\mathcal L}^\ast_{[{\pmb\gamma}+1]{\pmb{\delta'}}}\equiv {\mathcal L}^\ast_{\pmb{\gamma}[{\pmb\delta}+1]\pmb'},\qquad
{\mathcal L}^\ast_{[{\pmb\gamma}+1]\pmb'{\pmb\delta}}\equiv {\mathcal L}^\ast_{\pmb{\gamma'}[{\pmb\delta}+1]}.
\end{gathered}
\end{equation}
\begin{proof}
Let us denote by $\mathfrak U_{\pmb\lambda}$ the sum of $\mathfrak F_{(n+\pmb\lambda-1)}$ and the linear span of the real and imaginary parts of the one-forms \eqref{As_and_Cs}, and consider the index ranges $1\le \pmb\alpha,\pmb\beta\le n,\  1\le \pmb\gamma,\pmb\delta\le \pmb\lambda$.  Since $A_{\pmb{\alpha\beta\gamma\delta}}\equiv C_{\pmb{\alpha\beta\gamma\delta}}\equiv 0$ modulo ${\mathfrak U_{\pmb\lambda}}$, we have 
\begin{equation}
\begin{cases}
-2i Y_{\pmb{\alpha\beta\gamma\delta}}\equiv{\mathcal S}^\ast_{\pmb{\alpha\beta\gamma}[{\pmb{\delta}}+1]}+\frac{1}{3}T_{\pmb{\alpha\beta\gamma}[{\pmb{\delta}}+1]}\\
-\frac{1}{3}T_{\pmb{\alpha\beta\gamma\delta}}\equiv \mathbb Im \big(Y_{\pmb{\alpha\beta\gamma}[{\pmb\delta}+1]}\big)
\end{cases} 
\mod{\mathfrak U_{\pmb\lambda}.
\end{equation}
Using this, we obtain, modulo ${\mathfrak U_{\pmb\lambda}}$,
\begin{equation*}
-2iY_{\pmb{\alpha\beta}[{\pmb\gamma}+1]{\pmb\delta}}\equiv{\mathcal S}^\ast_{\pmb{\alpha\beta}[{\pmb\gamma}+1][{\pmb{\delta}}+1]}+\frac{1}{3}T_{\pmb{\alpha\beta}[{\pmb\gamma}+1][{\pmb{\delta}}+1]}\equiv {\mathcal S}^\ast_{\pmb{\alpha\beta}[{\pmb\delta}+1][{\pmb{\gamma}}+1]}+\frac{1}{3}T_{\pmb{\alpha\beta}[{\pmb\delta}+1][{\pmb{\gamma}}+1]}\equiv-2iY_{\pmb{\alpha\beta}[{\pmb\delta}+1]{\pmb\gamma}}
\end{equation*}
which proves the first equation in \eqref{Properties_mod_U}. Similarly,
\begin{equation*}
{\mathcal S}^\ast_{\pmb{\alpha\beta\delta}[{\pmb\gamma}+1]}+\frac{1}{3}T_{\pmb{\alpha\beta\delta}[{\pmb\gamma}+1]}\equiv -2iY_{\pmb{\alpha\beta\delta\gamma}}\equiv-2iY_{\pmb{\alpha\beta\gamma\delta}}\equiv {\mathcal S}^\ast_{\pmb{\alpha\beta\gamma}[{\pmb\delta}+1]}+\frac{1}{3}T_{\pmb{\alpha\beta\gamma}[{\pmb\delta}+1]}
\end{equation*}
and also
\begin{equation}\label{S_mod_2n}
-\frac{1}{3}T_{\pmb{\alpha\beta}[{\pmb\gamma}+1]{\pmb\delta}}\equiv\mathbb Im\big(Y^\ast_{\pmb{\alpha\beta}[{\pmb\gamma}+1][{\pmb{\delta}}+1]}\big)\equiv \mathbb Im\big(Y^\ast_{\pmb{\alpha\beta}[{\pmb\delta}+1][{\pmb{\gamma}}+1]}\big)}\equiv-\frac{1}{2}T_{\pmb{\alpha\beta}[{\pmb\delta}+1]{\pmb\gamma}},
\end{equation}
which yields the second and the third equations in the first line of \eqref{Properties_mod_U}. The proof of the rest of \eqref{Properties_mod_U} is completely analogous.  
Now, applying \eqref{Properties_mod_U}, we have that, modulo $\mathfrak U_{\pmb\lambda}$,
\begin{equation*}
B_{\pmb{\alpha\beta\gamma\lambda}} \equiv\ 
\frac{1}{3}T_{\pmb{\alpha\beta\gamma}[{\pmb\lambda}+1]}-\frac{1}{3}T_{\pmb{\alpha\beta\lambda}[{\pmb\gamma}+1]}
-{\mathcal S}^\ast_{\pmb{\alpha\beta\gamma}[{\pmb\lambda}+1]}+{\mathcal S}^\ast_{\pmb{\alpha\beta\lambda}[{\pmb\gamma}+1]}\equiv 0,
\end{equation*}
and similarly 
\begin{equation*}
D_{\pmb{\alpha\beta\gamma\lambda}}\equiv B_{\pmb{\beta\gamma\lambda}}\equiv D_{\pmb{\beta\gamma\lambda}}\equiv0.
\end{equation*}
\end{proof}

It is easy to observe that, by a repeated application of the identities in the first line of \eqref{Properties_mod_U},  each $A_{\pmb{\alpha\beta\gamma\lambda}}$ can be made equivalent, modulo $\mathfrak F_{n+\pmb\lambda-1}$, to one of the elements in the following two sets:
\begin{equation}\label{sets_for_A4}
\Big\{A_{\pmb{\alpha\beta\gamma\lambda}}\ \Big|\ \pmb\alpha,\pmb\beta,\pmb\gamma\in\{1,\pmb\lambda,\pmb\lambda+1,\dots, n\}\Big\};\ \ 
\Big\{A_{\pmb{\alpha\beta\gamma\lambda}}\ \Big|\ \pmb\alpha,\pmb\beta\in\{1,\pmb\lambda,\pmb\lambda+1,\dots, n\},\ 2\le\pmb\gamma\le \pmb\lambda-1 \Big\}.
\end{equation}

Let us consider the one-forms $C_{\pmb{\alpha\beta\gamma\lambda}}$ modulo $\mathfrak F_{n+\pmb\lambda-1}\oplus \text{span}\Big\{\mathbb Re\big(A_{\pmb{\alpha\beta\gamma\lambda}}\big),\ \mathbb Im\big(A_{\pmb{\alpha\beta\gamma\lambda}}\big),\Big\}$. If we suppose $1\le\pmb\gamma\le \pmb\lambda-1$, then
\begin{equation*}
C_{\pmb{\alpha\beta\gamma\lambda}}\equiv\frac{1}{3}T_{\pmb{\alpha\beta\gamma\lambda}}+\mathbb Im \big(Y_{\pmb{\alpha\beta\gamma}[{\pmb\lambda}+1]}\big)
\equiv\frac{1}{3}T_{\pmb{\alpha\beta}[{\pmb\gamma}+1][{\pmb\lambda-1}]}+\mathbb Im \big(Y_{\pmb{\alpha\beta}[{\pmb\gamma+1}]{\pmb\lambda}}\big)\equiv C_{\pmb{\alpha\beta}[{\pmb\gamma}+1]\lambda}\equiv 0.
\end{equation*}
Therefore, each $C_{\pmb{\alpha\beta\gamma\lambda}}$ is equivalent to one of the forms in the set 
\begin{equation}\label{set_for_C4}
\Big\{C_{\pmb{\alpha\beta\gamma\lambda}}\ \Big|\ \lambda\le\pmb\alpha,\pmb\beta,\pmb\gamma\le n\}\Big\}.
\end{equation} 
Thus, by \eqref{sets_for_A4} and \eqref{set_for_C4}, the linear subspace 
\begin{equation*}
\text{span}\Big\{\mathbb Re\big(A_{\pmb{\alpha\beta\gamma\lambda}}\big),\ \mathbb Im\big(A_{\pmb{\alpha\beta\gamma\lambda}}\big), C_{\pmb{\alpha\beta\gamma\lambda}}\ 
\Big|1\le\pmb\alpha,\pmb\beta,\pmb\gamma\le n\Big\}\subset \frac{\mathfrak F_{n+\pmb\lambda}}{\mathfrak F_{n+\pmb\lambda-1}}
\end{equation*}
can be generated by
\begin{equation}\label{dim_AC4}
2{n-\pmb\lambda+4\choose 3}+2{\pmb\lambda-2\choose 1}{n-\pmb\lambda+3\choose 2}+ {n-\pmb\lambda+3\choose 3}
\end{equation}
elements.

Similarly, by a repeated application of the identities in the second line of \eqref{Properties_mod_U},  we obtain that each of the one-forms $A_{\pmb{\alpha\beta\lambda}}$,  $C_{\pmb{\alpha\beta\lambda}}$ can be transformed, equivalently modulo $\mathfrak F_{n+\pmb\lambda-1}$, to one of the elements in the following two sets:
\begin{equation*}
\Big\{A_{\pmb{\alpha\beta\lambda}},\ C_{\pmb{\alpha\beta\lambda}}\ \Big|\ \pmb\alpha,\pmb\beta\in\{1,\pmb\lambda,\pmb\lambda+1,\dots, n\}\Big\};\ \ 
\Big\{A_{\pmb{\alpha\beta\lambda}},\ C_{\pmb{\alpha\beta\lambda}}\ \Big|\ \pmb\alpha\in\{1,\pmb\lambda,\pmb\lambda+1,\dots, n\},\ 2\le\pmb\beta\le \pmb\lambda-1 \Big\}.
\end{equation*} 
Therefore,
\begin{equation*}
\text{span}\Big\{\mathbb Re\big(A_{\pmb{\alpha\beta\lambda}}\big),\ \mathbb Im\big(A_{\pmb{\alpha\beta\lambda}}\big), 
\mathbb Re\big(C_{\pmb{\alpha\beta\lambda}}\big),\ \mathbb Im\big(C_{\pmb{\alpha\beta\lambda}}\big)\ 
\Big|1\le\pmb\alpha,\pmb\beta\le n\Big\}\subset \frac{\mathfrak F_{n+\pmb\lambda}}{\mathfrak F_{n+\pmb\lambda-1}}
\end{equation*}
can be generated by
\begin{equation}\label{dim_AC3}
4{n-\pmb\lambda+3\choose 2}+4{\pmb\lambda-2\choose 1}{n-\pmb\lambda+2\choose 1}
\end{equation}
elements.

Clearly, 
\begin{equation*}
\text{span}\Big\{A_{\pmb{\alpha\lambda}} 
\Big|1\le\pmb\alpha\le n\Big\}\oplus\text{span}\Big\{B_{\pmb{\alpha\lambda}} 
\Big|1\le\pmb\alpha\le \lambda-1\Big\} \subset \frac{\mathfrak F_{n+\pmb\lambda}}{\mathfrak F_{n+\pmb\lambda-1}}
\end{equation*}
can be generated by 
\begin{equation}\label{dim_AB2}
n+\pmb\lambda-1
\end{equation}
elements, and similarly 
\begin{equation*}
\text{span}\Big\{\mathbb Re\big(C_{\pmb{\alpha\lambda}}\big), \mathbb Im\big(C_{\pmb{\alpha\lambda}}\big)
\Big|1\le\pmb\alpha\le n\Big\}\oplus\text{span}\Big\{D_{\pmb{\alpha\lambda}} 
\Big|1\le\pmb\alpha\le \lambda-1\Big\} \subset \frac{\mathfrak F_{n+\pmb\lambda}}{\mathfrak F_{n+\pmb\lambda-1}}
\end{equation*}
generates by 
\begin{equation}\label{dim_CD2}
2(n+\pmb\lambda-1).
\end{equation}
elements.

Therefore, the dimension of $\mathfrak F_{n+\pmb\lambda}/\mathfrak F_{n+\pmb\lambda-1}$ is bounded above by the sum of \eqref{dim_AC4}, \eqref{dim_AC3},\eqref{dim_AB2} and \eqref{dim_CD2}, i.e.,
\begin{equation}\label{vnk_ineq}
v_{n+\pmb\lambda}\ =\ \dim\Big(\frac{\mathfrak F_{n+\pmb\lambda}}{\mathfrak F_{n+\pmb\lambda-1}}\Big)\ \pmb\le\ 
\frac{1}{2}(n+\pmb\lambda-1)(n-\pmb\lambda+4)(n-\pmb\lambda+5).
\end{equation}
Later on, we shell see that in \eqref{vnk_ineq} we have, actually, an equality. 

Let us observe that equations \eqref{Properties_mod_U} and \eqref{S_mod_2n} yield the identities
\begin{equation}\label{ad_prop_S}
\begin{cases}
{\mathcal S}^\ast_{\pmb{\alpha\beta\gamma\delta}}\ \equiv -2iY_{111[\pmb\alpha+\pmb\beta+\pmb\gamma+\pmb\delta-4]}
+\mathbb Im\big(Y_{111[\pmb\alpha+\pmb\beta+\pmb\gamma+\pmb\delta-2]}\big)\\
 {\mathcal S}^\ast_{\pmb{\alpha\beta\gamma\delta'}}\,\equiv Y_{111[\pmb\alpha+\pmb\beta+\pmb\gamma+\pmb\delta-3]}\\
 {\mathcal S}^\ast_{\pmb{\alpha\beta'\gamma\delta'}}\equiv -\mathbb Im\big(Y_{111[\pmb\alpha+\pmb\beta+\pmb\gamma+\pmb\delta-2]}\big)
\end{cases}
\quad
\mod{\mathfrak F_{2n}}
\end{equation}

Similarly, the vanishing of all one-forms $A_{\pmb{\alpha\beta\gamma}}$, $C_{\pmb{\alpha\beta\gamma}}$,  modulo $\mathfrak F_{2n}$, implies that
\begin{equation}\label{ad_prop_V}
\begin{cases}
{\mathcal V}^\ast_{\pmb{\alpha\beta\gamma}}\ \equiv -2i{\mathcal V}^\ast_{11[\pmb\alpha+\pmb\beta+\pmb\gamma-3]\pmb'}
-{\mathcal V}^\ast_{11\pmb'[\pmb\alpha+\pmb\beta+\pmb\gamma-2]'}\\
 {\mathcal V}^\ast_{\pmb{\alpha'\beta'\gamma'}}\equiv -{\mathcal V}^\ast_{11[\pmb\alpha+\pmb\beta+\pmb\gamma-2]\pmb'}
-2i{\mathcal V}^\ast_{11\pmb'[\pmb\alpha+\pmb\beta+\pmb\gamma-3]'}
\end{cases}
\quad
\mod{\mathfrak F_{2n}}
\end{equation}
and the vanishing of $A_{\pmb{\alpha\beta}}$, $C_{\pmb{\alpha\beta}}$ gives

\begin{equation}\label{ad_prop_LM}
\begin{cases}
{\mathcal L}^\ast_{\pmb{\alpha\beta'}}\ \equiv i\mathbb Re\big({\mathcal L}^\ast_{1[\pmb\alpha+\pmb\beta]}\big)\\
 {\mathcal M}^\ast_{\pmb{\alpha\beta}}\equiv -2i{\mathcal M}^\ast_{1[\pmb\alpha+\pmb\beta-2]\pmb'}
-{\mathcal M}^\ast_{1\pmb'[\pmb\alpha+\pmb\beta-1]'}
\end{cases}
\quad
\mod{\mathfrak F_{2n}}
\end{equation}

\subsection{The characters $v_{(2n+1)},v_{(2n+2)}$ and $v_{(2n+3)}$}\label{sec_2n+3}
The definition of $\mathfrak F_{2n+1}$ together with the identities \eqref{ad_prop_S}, \eqref{ad_prop_V}, \eqref{ad_prop_LM} and \eqref{Properties_mod_U} implies that the quotient space $\mathfrak F_{2n+1}/\mathfrak F_{2n}$ is generated by the real and imaginary parts of the one-forms:
\begin{equation}\label{3_5_Equations_I}
\begin{gathered}
{\mathcal V}^\ast_{11[\pmb\alpha-3]\pmb'}+\mathbb Im\big({\mathcal V}^\ast_{11\pmb'[\pmb\alpha-2]\pmb'}\big);\qquad
{\mathcal V}^\ast_{11[\pmb\alpha-3]\pmb'}+\mathbb Im\big({\mathcal V}^\ast_{11\pmb'[\pmb\alpha-2]\pmb'}\big)
+i\mathbb Im\big({\mathcal V}^\ast_{11[\pmb\alpha-1]\pmb'}\big);\\
\mathbb Im\big({\mathcal V}^\ast_{11[\pmb\alpha-2]\pmb'}\big);\qquad
\mathbb Re\big({\mathcal V}^\ast_{11[\pmb\alpha-2]\pmb'}\big)
+\mathbb Im\big({\mathcal V}^\ast_{11\pmb'[\pmb\alpha-1]\pmb'}\big);
\end{gathered}
\end{equation}
\begin{equation}\label{3_5_Equations_II}
\begin{gathered}
2{\mathcal M}^\ast_{1[\pmb\alpha-2]\pmb'}
-i{\mathcal M}^\ast_{1\pmb'[\pmb\alpha-1]\pmb'}+\mathbb Re\big({\mathcal L}^\ast_{1\pmb\alpha}\big);\qquad
2{\mathcal M}^\ast_{1[\pmb\alpha-2]\pmb'}
-i{\mathcal M}^\ast_{1\pmb'[\pmb\alpha-1]\pmb'}+i\mathbb Im\big({\mathcal L}^\ast_{1\pmb\alpha}\big)+{\mathcal M}^\ast_{1\pmb\alpha\pmb'};\\
-\overline{{\mathcal L}^\ast_{1[\pmb\alpha-1]}}
+{\mathcal M}^\ast_{1[\pmb\alpha-1]\pmb'};\qquad
\overline{{\mathcal L}^\ast_{1[\pmb\alpha-1]}}
+{\mathcal M}^\ast_{1[\pmb\alpha-1]\pmb'}+\mathbb Re\big({\mathcal L}^\ast_{1[\pmb\alpha+1]}\big)-i{\mathcal M}^\ast_{1\pmb'\pmb\alpha\pmb'};
\end{gathered}
\end{equation}
\begin{equation}\label{3_5_Equations_III}
\mathbb Re\big({\mathcal C}^\ast_{\pmb\alpha}\big);\qquad
\mathbb Im\big({\mathcal C}^\ast_{\pmb\alpha}\big)-\mathbb Re\big({\mathcal C}^\ast_{[\pmb\alpha+1]\pmb'}\big);\qquad
i{\mathcal C}^\ast_{\pmb\alpha\pmb'}+{\mathcal H}^\ast_{\pmb\alpha};\qquad
-i{\mathcal C}^\ast_{\pmb\alpha\pmb'}+{\mathcal H}^\ast_{\pmb\alpha}
-{\mathcal C}^\ast_{[\pmb\alpha+1]}-i{\mathcal H}^\ast_{[\pmb\alpha+1]\pmb'}.
\end{equation}

A brief inspection of the one forms in \eqref{3_5_Equations_I} shows that the linear span in $\mathfrak F_{2n+1}/\mathfrak F_{2n}$ of their real and imaginary parts can be generated by using only the real and imaginary parts of the first expression there. Similarly, for the the linear span of  the real and imaginary 
parts of the forms in \eqref{3_5_Equations_II}, we need only the real and imaginary parts of the forms which are in the first line, i.e.,
\begin{equation*}
2{\mathcal M}^\ast_{1[\pmb\alpha-2]\pmb'}
-i{\mathcal M}^\ast_{1\pmb'[\pmb\alpha-1]\pmb'}+\mathbb Re\big({\mathcal L}^\ast_{1\pmb\alpha}\big)\ \quad\text{and}\ \quad
2{\mathcal M}^\ast_{1[\pmb\alpha-2]\pmb'}
-i{\mathcal M}^\ast_{1\pmb'[\pmb\alpha-1]\pmb'}+i\mathbb Im\big({\mathcal L}^\ast_{1\pmb\alpha}\big)+{\mathcal M}^\ast_{1\pmb\alpha\pmb'}.
\end{equation*}
Observing also that the first two expressions in \eqref{3_5_Equations_III} correspond to one-forms that are real, we conclude that $\mathfrak F_{2n+1}/\mathfrak F_{2n}$ can be generated, over the real numbers, by using only
$
12n
$
one-forms, and thus
\begin{equation}\label{vnk1_ineq}
v_{2n+1}\le 12n.
\end{equation}

Furthermore, we obtain the relations 
\begin{equation}\label{eq_mod_2n+1}
\begin{cases}
{\mathcal V}^\ast_{11\pmb\alpha\pmb'}\equiv-\mathbb Im\big({\mathcal V}^\ast_{11\pmb'[\pmb\alpha+1]\pmb'}\big)\\
{\mathcal V}^\ast_{11\pmb\alpha}\ \equiv 
-\overline{{\mathcal V}^\ast_{11\pmb'\pmb\alpha\pmb'}}\\
 {\mathcal V}^\ast_{1\pmb'1\pmb'\pmb\alpha\pmb'}\equiv -\mathbb Im\big({\mathcal V}^\ast_{11\pmb'[\pmb\alpha+1]\pmb'}\big)
-2i{\mathcal V}^\ast_{11\pmb'[\pmb\alpha-1]\pmb'}\\
{\mathcal M}^\ast_{1\pmb\alpha}\equiv i\mathbb Re\big({\mathcal L}^\ast_{1\pmb\alpha}\big)\\
 {\mathcal M}^\ast_{1\pmb\alpha\pmb'}\equiv \overline{{\mathcal L}^\ast_{1\pmb\alpha}}\\
{\mathcal M}^\ast_{1\pmb'\pmb\alpha\pmb'}\equiv-2i\overline{{\mathcal L}^\ast_{1[\pmb\alpha-1]}}-i\mathbb Re\big({\mathcal L}^\ast_{1[\pmb\alpha+1]}\big)\\
{\mathcal L}^\ast_{1\pmb\alpha\pmb'}\equiv i\mathbb Re\big({\mathcal L}^\ast_{1[\pmb\alpha+1]}\big)\\
{\mathcal C}^\ast_{\pmb\alpha}\equiv i\mathbb Re\big({\mathcal C}^\ast_{[\pmb\alpha+1]\pmb'}\big)\\
{\mathcal H}^\ast_{\pmb\alpha}\equiv -i{\mathcal C}^\ast_{\pmb\alpha\pmb'}\\
{\mathcal H}^\ast_{\pmb\alpha\pmb'}\equiv -2{\mathcal C}^\ast_{[\pmb\alpha-1]\pmb'}-\mathbb Re\big({\mathcal C}^\ast_{[\pmb\alpha+1]\pmb'}\big)
\end{cases}
\quad
\mod{\mathfrak F_{2n+1}}
\end{equation}

By \eqref{eq_mod_2n+1}, the quotient space $\mathfrak F_{2n+2}/\mathfrak F_{2n+1}$ is generated by the real and imaginary parts of
\begin{equation}
{\mathcal V}^\ast_{11\pmb'\pmb\alpha\pmb'},\qquad {\mathcal L}^\ast_{1\pmb\alpha},\qquad
{\mathcal C}^\ast_{\pmb{\alpha\pmb'}},\qquad i{\mathcal R}^\ast+{\mathcal Q}^\ast,\qquad {\mathcal P}^\ast+\overline{\mathcal P}^\ast
\end{equation}
and therefore,
\begin{equation}\label{vnk2_ineq}
v_{2n+2}\le 6n+3.
\end{equation}

The quotient space $\mathfrak F_{2n+3}/\mathfrak F_{2n+2}$ is generated by the real and imaginary parts of the one-forms:
\begin{equation*}
\qquad {\mathcal P}^\ast-\overline{\mathcal P}^\ast;\qquad {\mathcal R}^\ast;\\
\end{equation*}
\begin{equation*}
{\mathcal V}^\ast_{\pmb{\alpha\beta\gamma'}}+\overline{{\mathcal V}^\ast_{\pmb{\alpha'\beta'\gamma'}}}
-{\mathcal S}^\ast_{\pmb{\alpha\beta\gamma}1}-{\mathcal S}^\ast_{\pmb{\alpha\beta\gamma'}1\pmb'}+i{\mathcal S}^\ast_{\pmb{\alpha\beta\gamma}[3]}-i{\mathcal S}^\ast_{\pmb{\alpha\beta\gamma'}[3]\pmb'}
\equiv2iY_{111[\pmb\alpha-3]}+2Y_{111[\pmb\alpha-1]}+2i\mathbb Im\big(Y_{111[\pmb\alpha+1]}\big);
\end{equation*}
\begin{multline*}
-i{\mathcal V}^\ast_{\pmb{\alpha\beta\gamma'}}+i\overline{{\mathcal V}^\ast_{\pmb{\alpha'\beta'\gamma'}}}-{\mathcal V}^\ast_{\pmb{\alpha\beta}[{\pmb{\gamma}+1}]}-\overline{{\mathcal V}^\ast_{\pmb{\alpha'\beta'}[{\pmb{\gamma}+1}]}}
+i{\mathcal S}^\ast_{\pmb{\alpha\beta\gamma'}1\pmb'}-i{\mathcal S}^\ast_{\pmb{\alpha\beta\gamma}1}
+{\mathcal S}^\ast_{\pmb{\alpha\beta}[{\pmb{\gamma}}+1]1\pmb'}-{\mathcal S}^\ast_{\pmb{\alpha\beta}[{\pmb{\gamma}+1}]\pmb'1}\\
-{\mathcal S}^\ast_{\pmb{\alpha\beta\gamma}[3]}
-{\mathcal S}^\ast_{\pmb{\alpha\beta\gamma'}[3]\pmb'}+i{\mathcal S}^\ast_{\pmb{\alpha\beta}[{\pmb{\gamma}+1}][3]\pmb'}
+i{\mathcal S}^\ast_{\pmb{\alpha\beta}[3][{\pmb{\gamma}+1}]\pmb'}\\
\equiv -2Y_{111[\pmb\alpha-3]}+2iY_{111[\pmb\alpha-1]}-2i\mathbb Im\big(Y_{111[\pmb\alpha-1]}\big)+2iY_{111[\pmb\alpha+1]};
\end{multline*}
\begin{equation*}
{\mathcal V}^\ast_{\pmb{\alpha\beta'\gamma'}}-\overline{{\mathcal V}^\ast_{\pmb{\alpha'\beta\gamma'}}}
-{\mathcal S}^\ast_{\pmb{\alpha\beta'\gamma}1}+\overline{{\mathcal S}^\ast_{\pmb{\alpha'\beta\gamma}1}}
+i{\mathcal S}^\ast_{\pmb{\alpha\beta'\gamma}[3]}+i\overline{{\mathcal S}^\ast_{\pmb{\alpha'\beta\gamma}[3]}}\ 
\equiv\ 2i\Big(\mathbb Re\big(Y_{111\pmb\alpha}\big)-\mathbb Im\big(Y_{111[\pmb\alpha-2]}\big)\Big);
\end{equation*}
\begin{multline*}
-i{\mathcal V}^\ast_{\pmb{\alpha\beta'\gamma'}}-i\overline{{\mathcal V}^\ast_{\pmb{\alpha'\beta\gamma'}}}
-{\mathcal V}^\ast_{\pmb{\alpha\beta'}[{\pmb{\gamma}+1}]}+\overline{{\mathcal V}^\ast_{\pmb{\alpha'\beta}[{\pmb{\gamma}+1}]}}
-i\overline{{\mathcal S}^\ast_{\pmb{\alpha'\beta\gamma}1}}-i{\mathcal S}^\ast_{\pmb{\alpha\beta'\gamma}1}
+{\mathcal S}^\ast_{\pmb{\alpha\beta'}[{\pmb{\gamma}}+1]1\pmb'}-{\mathcal S}^\ast_{\pmb{\alpha\beta'}[{\pmb{\gamma}+1}]\pmb'1}\\
-{\mathcal S}^\ast_{\pmb{\alpha\beta'\gamma}[3]}
+\overline{{\mathcal S}^\ast_{\pmb{\alpha'\beta\gamma}[3]}}+i{\mathcal S}^\ast_{\pmb{\alpha\beta'}[{\pmb{\gamma}+1}][3]\pmb'}
+i{\mathcal S}^\ast_{\pmb{\alpha\beta'}[3][{\pmb{\gamma}+1}]\pmb'}\\
\equiv -2i\Big(\mathbb Re\big(Y_{111[\pmb\alpha-2]}\big)+\mathbb Im\big(Y_{111\pmb\alpha}\big)+\mathbb Im\big(Y_{111[\pmb\alpha+2]}\big)\Big).
\end{multline*}

It is easy to observe that we can choose as generators the $2n+2$ one-forms 
\begin{equation*}
\qquad {\mathcal P}^\ast-\overline{\mathcal P}^\ast,\qquad {\mathcal R}^\ast,\qquad \mathbb Re\big(Y_{111\pmb\alpha}\big)-\mathbb Im\big(Y_{111[\pmb\alpha-2]}\big),
\qquad \mathbb Re\big(Y_{111[\pmb\alpha-2]}\big)+\mathbb Im\big(Y_{111\pmb\alpha}\big)+\mathbb Im\big(Y_{111[\pmb\alpha+2]}\big),
\end{equation*}
and thus 
\begin{equation}\label{vnk3_ineq}
v_{2n+3}\le 2n+2.
\end{equation}

Since the system of equations 
\begin{equation}\label{equation_Y_Y_Y}
 \mathbb Im\big(Y_{111[\pmb\alpha-4]}\big)+\mathbb Im\big(Y_{111\pmb\alpha}\big)+\mathbb Im\big(Y_{111[\pmb\alpha+2]}\big)\equiv 0 \mod{ \mathfrak F_{2n+3}}
\end{equation}
is non-degenerate (cf. the technical Lemma~\ref{tech_lemma} below), we obtain that $Y_{111\pmb\alpha}\in \mathfrak F_{2n+3}$ and thus $\mathfrak F_{2n+3}$ is just the free vector space generated by the real and imaginary parts of all the one forms 
$
 \mathcal S^\ast_{\alpha\beta\gamma\delta}$, $\mathcal V^\ast_{\alpha\beta\gamma}$, $\mathcal  L^\ast_{\alpha\beta}$, $ \mathcal  M^\ast_{\alpha\beta}$, $\mathcal C^\ast_\alpha$, $\mathcal H^\ast_\alpha$, $\mathcal P^\ast, \  \mathcal Q^\ast,\  \mathcal R^\ast.
$
Therefore,  $\dim{\mathfrak F_{2n+3}}=d_2$, where $d_2$ is given by \eqref{def_d_2}). We have also 
\begin{equation*}
\mathfrak F_{(2n+3)}=\mathfrak F_{(2n+4)}=\dots=\mathfrak F_{d_1}
\end{equation*} 
and thus $v_{2n+4}\ =\ v_{2n+5}\ =\ \dots\ =\  v_{d_1}\ =\ 0$, i.e., non-zero characters may appear only among $v_2,\dots,v_{2n+3}$.

Since $\mathfrak F_n\subset \mathfrak F_{n+1}\subset\dots\subset \mathfrak F_{2n}\subset \mathfrak F_{2n+1}\subset \mathfrak F_{2n+2}\subset \mathfrak F_{2n+1}$, we obtain, by using the inequalities \eqref{vnk_ineq}, \eqref{vnk1_ineq},\eqref{vnk2_ineq} and \eqref{vnk3_ineq}, that
\begin{multline*}
d_2=\dim{\mathfrak F_{2n+3}}=\dim{\mathfrak F_n}+\dim{\frac{\mathfrak F_{n+1}}{\mathfrak F_{n}}} +\dots+\dim{\frac{\mathfrak F_{2n+3}}{\mathfrak F_{2n+2}}}
\ =\  \dim{\mathfrak F_n}\ +\ v_{n+1}\ +\ \dots\ +\ v_{2n+3}\\
\pmb\le\  \frac{1}{24}n(n-1)(11n^2+61n+86) \\ 
+\ \sum_{\pmb\lambda=1}^n \frac{1}{2}(n+\pmb\lambda-1)(n-\pmb\lambda+4)(n-\pmb\lambda+5)\  +\ 12n\ +\ (6n+3)\ +\ (2n+2).  
\end{multline*}

Computing the sum of the terms on the RHS of the above inequality produces again the number $d_2$. This implies that each of the inequalities  \eqref{vnk_ineq}, \eqref{vnk1_ineq},\eqref{vnk2_ineq}, \eqref{vnk3_ineq} is actually an equality and thus, we have shown
\begin{equation}\label{all_charachters}
\begin{cases}
v_{\pmb\lambda}=\frac{1}{2}(\pmb\lambda-1)(\pmb\lambda-2n-4)(\pmb\lambda-2n-5)\\
v_{n+\pmb\lambda}\  =\ 
\frac{1}{2}(n+\pmb\lambda-1)(n-\pmb\lambda+4)(n-\pmb\lambda+5)\\
v_{2n+1}\ =\  12n\\
v_{2n+2}\ =\  6n+3\\
v_{2n+3}\ =\  2n+2\\
v_{2n+4}\ =\ v_{2n+5}\ =\ \dots\ =\  v_{d_1}\ =\ 0.
\end{cases}
\end{equation}

\subsection{A technical lemma} Here we give a proof to an algebraic lemma which is used in Section~\ref{sec_2n+3} to show that the system of equations \eqref{equation_Y_Y_Y} yields $$\mathbb Im\big(Y_{111\pmb\alpha}\big)\equiv 0\mod{\mathfrak F_{2n+3}}.$$

\begin{lemma}\label{tech_lemma}
Let $\mathbb Z_n=\{0,1,\dots,n-1\}$ be the least residue system modulo $n$. If $f: \mathbb Z_n\rightarrow \mathbb C$ is any function satisfying
\begin{equation}\label{equations_for_fk}
f(k)+f(k+4)+f(k+6)=0,\qquad \forall k\in\mathbb Z_n
\end{equation}
then, necessarily, $f=0$.
\end{lemma}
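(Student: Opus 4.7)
The plan is to diagonalize the problem via the discrete Fourier transform on $\mathbb{Z}_n$, reducing the content of the lemma to a single algebraic fact about $6$-th degree expressions in roots of unity.

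First I would set $\omega = e^{2\pi i/n}$ and, for $f:\mathbb{Z}_n\to\mathbb{C}$, introduce the DFT
\[
\hat{f}(j) \;=\; \sum_{k=0}^{n-1} f(k)\,\omega^{jk},\qquad j\in\mathbb{Z}_n.
\]
Under the shift operator $(Tf)(k) = f(k+1)$ the DFT acts by the character $\omega^{-j}$, i.e.\ $\widehat{Tf}(j) = \omega^{-j}\hat{f}(j)$. The relations \eqref{equations_for_fk} can be written as $(I + T^4 + T^6)f = 0$, which on the Fourier side becomes
\[
\bigl(1 + \omega^{-4j} + \omega^{-6j}\bigr)\,\hat{f}(j) \;=\; 0 \qquad \text{for every } j\in\mathbb{Z}_n.
\]
Hence it suffices to verify that $1 + \zeta^4 + \zeta^6 \ne 0$ for every $n$-th root of unity $\zeta$; once this is done, each $\hat{f}(j)$ must vanish, and the DFT is an isomorphism so $f\equiv 0$ follows.

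Next I would establish the algebraic key step: no unit modulus complex number $\zeta$ can satisfy $1 + \zeta^4 + \zeta^6 = 0$. Substituting $u = \zeta^2$ (still of modulus one), the equation becomes $1 + u^2 + u^3 = 0$, which rearranges to
\[
u^2(1+u) \;=\; -1.
\]
Passing to absolute values and using $|u|=1$ gives $|1+u| = 1$. Writing $u = e^{i\theta}$ and expanding $|1 + e^{i\theta}|^2 = 2 + 2\cos\theta = 1$ forces $\cos\theta = -\tfrac{1}{2}$, so $u$ must be a primitive cube root of unity. But then $u^3 = 1$, and consequently $1 + u^2 + u^3 = 2 + u^2$, whose modulus is at least $2 - |u^2| = 1$, contradicting $1 + u^2 + u^3 = 0$.

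The main obstacle is essentially this algebraic verification; everything else is formal Fourier analysis on the finite cyclic group. I chose the geometric/absolute-value route rather than trying to factor the polynomial $x^6 + x^4 + 1$ and inspect its roots, because the former gives the needed non-vanishing uniformly in $n$ in a few lines. I would close the argument by remarking that the same proof shows the operator $I + T^4 + T^6$ is invertible on $\mathbb{C}^{\mathbb{Z}_n}$ for every $n$, which is the precise form in which the lemma is invoked in Section~\ref{sec_2n+3}.
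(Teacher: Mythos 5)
Your proof is correct, and it takes a genuinely different route from the paper's. You diagonalize the circulant operator $I+T^4+T^6$ by the discrete Fourier transform on $\mathbb Z_n$, so the lemma reduces immediately to the non-vanishing of $1+\zeta^4+\zeta^6$ at every $n$-th root of unity $\zeta$, and your absolute-value argument ($u^2(1+u)=-1$ forces $|1+u|=1$, hence $u$ a primitive cube root of unity, hence $1+u^2+u^3=2+u^2\ne 0$) correctly rules out any zero of $1+u^2+u^3$ on the whole unit circle. The paper instead argues by elimination: it introduces the auxiliary recurrence $a_k+a_{k+1}+a_{k+3}=0$, telescopes the equations $Q_{[k]}=0$ into a $3\times 3$ linear system in $x_1,x_3,x_5$, and evaluates its determinant via Vieta's formulae as $(z_1^n-1)(z_2^n-1)(z_3^n-1)$, where $z_1,z_2,z_3$ are the roots of $z^3+z+1=0$. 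Both arguments ultimately rest on the same algebraic fact --- that $z^3+z+1$ (equivalently its reciprocal polynomial $u^3+u^2+1$, whose roots are the reciprocals of the $z_i$) has no root on the unit circle --- but the paper merely asserts this, whereas you actually prove it; your DFT route also bypasses the combinatorial identities and the determinant computation entirely, and delivers the invertibility of $I+T^4+T^6$ on $\mathbb C^{\mathbb Z_n}$ uniformly in $n$ in a more transparent way.
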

\begin{proof} We consider the values $f(1),\dots,f(n)$ as unknown variables $x_1,\dots,x_n$.
For each $k\in\mathbb N$, we let 
\begin{equation}
Q_{[k]}\overset{def}{\ =\ }x_{[k]}+x_{[k+4]}+x_{[k+6]},
\end{equation} 
where, by following the conventions adopted in \ref{extra_conv}, we use indices enclosed in square brackets to indicate that their values are considered modulo $n$.  Then, in order to proof the lemma, we need to show that the system of linear equations
\begin{equation}\label{system_linear eq_Q}
Q_1=Q_2=\dots= Q_n=0
\end{equation}
 is non-degenerate.

Let us define the sequence of numbers $a_1,\dots,a_k,\dots$ by the recurrence relation 
\begin{equation}\label{def_recurrence}
a_k+a_{k+1}+a_{k+3}=0,\qquad a_1=1,\ a_2=0,\ a_3=-1.
\end{equation}
Then a small combinatorial calculation shows that, for each $m\in\mathbb N$, we have the identity
\begin{equation}\label{general_equation_Qk}
\sum_{k=1}^{m}a_kQ_{[2k-1]}=x_1-a_{m+1}x_{[2m+1]}-a_{m+2}x_{[2m+3]}+a_{m}x_{[2m+5]}
\end{equation}
and, similarly, 
\begin{equation}\label{general_2_Qk}
\begin{split}
\sum_{k=1}^{m}a_kQ_{[2k+1]}=x_3-a_{m+1}x_{[2m+3]}-a_{m+2}x_{[2m+5]}+a_{m}x_{[2m+7]},\\
\sum_{k=1}^{m}a_kQ_{[2k+3]}=x_5-a_{m+1}x_{[2m+5]}-a_{m+2}x_{[2m+7]}+a_{m}x_{[2m+9]},
\end{split}
\end{equation}

Setting $m=n$ into \eqref{general_equation_Qk}, we obtain that \eqref{system_linear eq_Q} yields the equation
\begin{equation}\label{x_135_first}
(1-a_{n+1})x_1-a_{n+2}x_{3}-a_nx_5=0.
\end{equation}

Similarly, setting $m=n-1$ into the first equation of \eqref{general_2_Qk}, and $m=n-2$ into the second, we get, respectively,
\begin{equation}\label{x_135_second_and-third}
\begin{split}
-a_nx_1+(1-a_{n+1})x_{3}-a_{n-1}x_5&=0\\
-a_{n-1}x_1-a_{n}x_{3}+(1+a_{n-2})x_5&=0.
\end{split}
\end{equation}

Next we show that the determinant   
\begin{equation}\label{determinant_3x3}
\begin{gathered}
\begin{vmatrix}
1-a_{n+1}&-a_{n+2}&-a_n\\
-a_n&1-a_{n+1}&-a_{n-1}\\
-a_{n-1}&-a_{n}&1+a_{n-2}\\
\end{vmatrix}=a_n^3-2a_n a_{n-1}a_{n+1}-a_n a_{n-2}a_{n+2}+a_{n-1}^2 a_{n+2}\\+a_{n-2}a_{n+1}^2+2a_n a_{n-1}-a_n a_{n+2}
-2 a_{n-2} a_{n+1}+a_{n+1}^2+a_{n-2}-2a_{n+1}+1
\end{gathered}
\end{equation}
is never vanishing.  Indeed, consider the three different roots $z_1$, $z_2$, $z_3$ of the polynomial $z^3+z+1=0$ and take $c_1$, $c_2$, $c_3$ to be the unique complex numbers satisfying 
\begin{equation}
\begin{cases}
c_1z_1+c_2z_2+c_3z_3=1\\
c_1z_1^2+c_2z_2^2+c_3z_3^2=0\\
c_1z_1^3+c_2z_2^3+c_3z_3^3=-1.
\end{cases}
\end{equation}
Then, the solution of the recurrence relation\eqref{def_recurrence} has the form 
$
a_k=c_1z_1^k+c_2z_2^k+c_3z_3^k.
$ Substituting back into \eqref{determinant_3x3} and using the Vieta's formulae, we obtain that   
\begin{equation}
\begin{vmatrix}
1-a_{n+1}&-a_{n+2}&-a_n\\
-a_n&1-a_{n+1}&-a_{n-1}\\
-a_{n-1}&-a_{n}&1+a_{n-2}\\
\end{vmatrix}=(z_1^n-1)(z_2^n-1)(z_3^n-1),
\end{equation}
which is a never vanishing number, since non of the roots of the polynomial $z^3+z+1=0$ has a unite  norm. Therefore, the linear equations \eqref{x_135_first} and \eqref{x_135_second_and-third} have a unique solution $x_1=x_3=x_5=0$. Since the system \eqref{system_linear eq_Q} is invariant under cyclic permutations of the indices of its variables, this is enough to conclude that it is non-degenerate. 
\end{proof}

\subsection{Main theorem}
Now, we are in a position to check that the Cartan's test (cf. \eqref{Cartan_test_general}) is satisfied for our exterior differential system. Indeed, using \eqref{all_charachters}, we compute
\begin{multline}\label{CompCartanTest}
\sum_{\pmb\lambda=1}^n \Big(\pmb\lambda\, v_{\pmb\lambda}+(n+\pmb\lambda)v_{n+\pmb\lambda}\Big)  +\ (2n+1)v_{2n+1}\ +\ (2n+2)v_{2n+2}\ +\ (2n+3)v_{2n+3}\\
=\ \frac{2}{15}(2n+5)(2n+3)(n+3)(n+2)(n+1).
\end{multline}
The number on the RHS above is equal to the constant $D$ determined by \eqref{compute_D}. Therefore, the system is in involution.

\begin{thrm}\label{main} Assume we are given some arbitrary complex numbers  \begin{equation}\label{FixedComplexNumbers}
 \begin{split}
 \ &  \mathcal S^\circ_{\alpha\beta\gamma\delta},  \mathcal V^\circ_{\alpha\beta\gamma},\ \mathcal  L^\circ_{\alpha\beta},\ \mathcal  M^\circ_{\alpha\beta}, \ \mathcal C^\circ_\alpha, \  \mathcal H^\circ_\alpha,\  \mathcal P^\circ, \  \mathcal Q^\circ,\  \mathcal R^\circ\\
 \mathcal A^\circ_{\alpha\beta\gamma\delta\epsilon},\ & \mathcal B^\circ_{\alpha\beta\gamma\delta},\  \mathcal C^\circ_{\alpha\beta\gamma\delta} ,\ \mathcal  D^\circ_{\alpha\beta\gamma},\ \mathcal  E^\circ_{\alpha\beta\gamma},
 \ \mathcal  F^\circ_{\alpha\beta\gamma},\ \mathcal  G^\circ_{\alpha\beta},\ \mathcal  X^\circ_{\alpha\beta},\ \mathcal  Y^\circ_{\alpha\beta},\ \mathcal  Z^\circ_{\alpha\beta}, \\
  &\ (\mathcal N^\circ_1)_\alpha,\ (\mathcal N^\circ_2)_\alpha,\ (\mathcal N^\circ_3)_\alpha,\ (\mathcal N^\circ_4)_\alpha,\ (\mathcal N^\circ_5)_\alpha,\ \mathcal U^\circ_s,\  \mathcal W^\circ_s
\end{split}
\end{equation}
that depend totally symmetrically on the indices $1\le\alpha,\beta,\gamma,\delta\le 2n$  and satisfy the relations
\begin{equation*} 
 \begin{cases}
 (\mathfrak j{\mathcal S^\circ})_{\alpha\beta\gamma\delta}=\mathcal S^\circ_{\alpha\beta\gamma\delta}\\
 (\mathfrak j{\mathcal L^\circ})_{\alpha\beta}=\mathcal L^\circ_{\alpha\beta}\\
 \overline {\mathcal R^\circ}=\mathcal R^\circ.
 \end{cases}.
 \end{equation*}
Then, there exists a real analytic qc structure defined in a neighborhood $\Omega$ of $0\in R^{4n+3}$ such that for some point $u\in P_1$ with $\pi_o(\pi_1(u))=0$ (here, we keep the notation $\pi_1: P_1\rightarrow P_0$ and $\pi_o: P_o\rightarrow \Omega$ for the naturally associated to the qc structure of $\Omega$ principle bundles, as defined in section Section~\ref{Can_Cartan}),  the curvature functions \eqref{CurvatureFunctions} and their covariant derivatives \eqref{solution_space_coord} take at $u$ values given by the corresponding  complex numbers \eqref{FixedComplexNumbers}.     

Furthermore, the generality of the real analytic qc structures in $4n+3$ dimensions is given by $2n+2$ real analytic functions of $2n+3$ variables.
\end{thrm}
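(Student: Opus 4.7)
The plan is to invoke Cartan's Third Theorem, whose hypotheses have now all been verified. Assumptions I and II of Section~\ref{difsys_conclusions} were established in Section~\ref{qc_as_EDS} through the Bianchi identities \eqref{d2Gamma_ab}--\eqref{d2psi_2} and the explicit parametrization \eqref{FormulaeForCovDer} of solutions to the associated algebraic system; the Cartan test $\sum \lambda\, v_\lambda = D$ was just confirmed via the computation \eqref{CompCartanTest} against \eqref{compute_D}. Since the system is therefore in involution, the entire proof reduces to bookkeeping within the Cartan-K\"ahler framework.

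First I would translate the prescribed complex numbers \eqref{FixedComplexNumbers} into a point $u_0$ of the ambient manifold $N$ from \eqref{ProductN} together with an integral element $E \subset T_{u_0} N$ of $\mathcal I$. The curvature values $\mathcal S^\circ,\dots,\mathcal R^\circ$ fix the $\mathbb R^{d_2}$ coordinate of $u_0$, and after choosing the identity matrix in the $GL(d_1,\mathbb R)$ factor, the covariant derivative data $\mathcal A^\circ,\dots,\mathcal W^\circ$ determine the restrictions of the curvature one-forms \eqref{curvature_one_forms} to $E$ through the explicit formulas \eqref{FormulaeForCovDer}. Together with the structure equations \eqref{first_str_eq}, \eqref{str-eq-con}, \eqref{dGamma_ab}--\eqref{dpsi_23}, these relations pin down a unique integral element $E$ of the ideal passing through $u_0$.

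Second I would apply Cartan's Third Theorem, as formulated in Section~\ref{difsys_conclusions}: involution of $\mathcal I$ guarantees the existence of a real analytic integral manifold of $\mathcal I$ through $u_0$ tangent to $E$. Pulled back to an open domain of $\mathbb R^{d_1}$, this yields a coframing \eqref{coframe} together with functions \eqref{functions_str} satisfying the full set of structure equations and symmetries \eqref{Gamma-symmetries}, \eqref{properties-curvature}. The converse direction of the correspondence established at the beginning of Section~\ref{qc_as_EDS}---recovering a qc structure from such a coframing on (what then plays the role of) $P_1$---produces, locally and uniquely, a real analytic qc structure on a neighborhood $\Omega$ of $0 \in \mathbb R^{4n+3}$; by construction, the curvature components and their covariant derivatives at the point $u \in P_1$ corresponding to $u_0$ take exactly the prescribed values \eqref{FixedComplexNumbers}. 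Finally, the generality statement is read off from \eqref{all_charachters}: the last non-vanishing character is $v_{2n+3} = 2n+2$, so the general statement at the end of Section~\ref{difsys_conclusions} gives that, modulo diffeomorphisms, solutions depend on $2n+2$ functions of $2n+3$ variables.

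The main conceptual obstacle---the combinatorial verification that the characters $v_1,\dots,v_{d_1}$ sum correctly in the Cartan test---has been dispatched in the preceding sections. The only remaining delicate point is making sure the identification of the point $u \in P_1$ in the statement with the base point $u_0 \in N$ of the Cartan-K\"ahler integral manifold is consistent: this amounts to observing that the recovery of the bundle $\pi_1 : P_1 \to P_o$ and of $\pi_o : P_o \to \Omega$ from the coframing is the exact reverse of the construction in Section~\ref{Can_Cartan}, hence canonically identifies the chosen point $u_0$ of the integral manifold with a point of $P_1$ lying over $0 \in \Omega$, as required.
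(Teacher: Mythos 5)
Your overall strategy coincides with the paper's: encode the prescribed data as a point of $N$ together with an integral element, and invoke Cartan's Third Theorem. But there is a genuine gap at the decisive step. The involutivity computation of Section~\ref{sec3} --- the character sequence \eqref{all_charachters} and the identity \eqref{CompCartanTest} --- was carried out only at the origin $o\in N$ and only for the particular integral element $E$ cut out by \eqref{DefE}, i.e.\ the one on which all the one-forms $\mathcal S^\ast_{\alpha\beta\gamma\delta},\dots,\mathcal R^\ast$ vanish. Cartan--K\"ahler produces an integral manifold tangent to a \emph{given Cartan-ordinary} integral element; it does not follow formally that every other integral element of $\mathcal I$ --- in particular the one you ``pin down'' from the data $\mathcal A^\circ,\dots,\mathcal W^\circ$ via \eqref{FormulaeForCovDer}, sitting over the point whose $\mathbb R^{d_2}$-coordinate is $(\mathcal S^\circ,\dots,\mathcal R^\circ)$ --- is again Cartan-ordinary. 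Indeed, the general statement quoted at the end of Section~\ref{difsys_conclusions} only guarantees a solution with the specific initial jet $du^s|_0=F^s_d(u_0)\,\omega^d|_0$, which corresponds to all of $\mathcal A^\circ,\dots,\mathcal W^\circ$ being zero; prescribing the covariant derivatives arbitrarily is precisely the extra content of Theorem~\ref{main}, and it requires an argument you do not give.

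The paper fills this gap as follows: it introduces the shifted one-forms $\widehat{\mathcal S^\ast}_{\alpha\beta\gamma\delta},\dots,\widehat{\mathcal R^\ast}$ obtained by subtracting from each starred form the semibasic expression of \eqref{FormulaeForCovDer} evaluated at the constants $\mathcal A^\circ,\dots,\mathcal W^\circ$; it observes (via Proposition~4.3 of \cite{MS}) that the generating three-forms $\Delta_{\alpha\beta}$, $\Delta_\alpha$, $\Psi_s$ are \emph{unchanged} under the substitution of hatted for starred forms; and it concludes that the entire character computation, hence the Cartan test, goes through verbatim for the new integral element $\widehat E\subset T_pN$ defined by the vanishing of the hatted forms. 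That invariance is the one substantive point of the paper's proof, and your write-up supplies neither it nor a substitute for it. The remaining ingredients of your proposal (recovering the qc structure from the coframing, identifying $u\in P_1$ with the base point of the integral manifold, and reading off the generality from $v_{2n+3}=2n+2$) do match the paper.
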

\begin{proof}
By the computation \eqref{CompCartanTest}, we have shown that the Cartan's
test is satisfied at the origin $o\in N$ (cf.  \eqref{ProductN}) for the
chosen integral element $E\subset T_oN$ which we have determined by the
equations \eqref{DefE}.  In order to prove the theorem, we need to extend
this computation to a slightly more general situation.  Namely, let us
consider the point 
%$$
$p\overset{def}{=}\Big(\text{id}, 0, (\mathcal
S^\circ_{\alpha\beta\gamma\delta}, \mathcal V^\circ_{\alpha\beta\gamma},\
\mathcal L^\circ_{\alpha\beta},\ \mathcal M^\circ_{\alpha\beta}, \ \mathcal
C^\circ_\alpha, \ \mathcal H^\circ_\alpha,\ \mathcal P^\circ, \ \mathcal
Q^\circ,\ \mathcal R^\circ)\Big)\in N$
%$$
and define the one-forms
{\allowdisplaybreaks
\begin{equation*}
 \begin{split}
\widehat{\mathcal S^\ast}_{\alpha\beta\gamma\delta}&\overset{def}{\ =\ }\mathcal S^\ast_{\alpha\beta\gamma\delta}-\Bigg\{ 
\mathcal A^\circ_{\alpha\beta\gamma\delta\epsilon}\,\theta^{\epsilon}-\pi^{\sigma}_{\bar\epsilon}(\mathfrak j\mathcal A^\circ)_{\alpha\beta\gamma\delta\sigma}\,\theta^{\bar\epsilon}
 +\Big(\mathcal B^\circ_{\alpha\beta\gamma\delta}+(\mathfrak j \mathcal B^\circ)_{\alpha\beta\gamma\delta}\Big)\eta_1 + i\mathcal C^\circ_{\alpha\beta\gamma\delta}\big(\eta_2+i\eta_3\big)\\
&\qquad\qquad\qquad\qquad\qquad\qquad\qquad\qquad\qquad\qquad\qquad\qquad\qquad - i(\mathfrak j\mathcal C^\circ)_{\alpha\beta\gamma\delta}\big(\eta_2-i\eta_3\big)\Bigg\}\\
\widehat{\mathcal V^\ast}_{\alpha\beta\gamma}&\overset{def}{\ =\ }{\mathcal V^\ast}_{\alpha\beta\gamma}-\Bigg\{\mathcal C^\circ_{\alpha\beta\gamma\epsilon}\,\theta^{\epsilon}+\pi^{\sigma}_{\bar\epsilon}\,\mathcal B^\circ_{\alpha\beta\gamma\sigma}\,\theta^{\bar\epsilon}
 +\mathcal D^\circ_{\alpha\beta\gamma}\eta_1 + \mathcal E^\circ_{\alpha\beta\gamma}\big(\eta_2+i\eta_3\big)+\mathcal F^\circ_{\alpha\beta\gamma}\big(\eta_2-i\eta_3\big)\Bigg\}\\
 \widehat{\mathcal L^\ast}_{\alpha\beta}&\overset{def}{\ =\ }{\mathcal L^\ast}_{\alpha\beta}-\Bigg\{-
(\mathfrak j\mathcal F^\circ)_{\alpha\beta\epsilon}\,\theta^{\epsilon}\!-\!\pi^{\sigma}_{\bar\epsilon}\, \mathcal F^\circ_{\alpha\beta\sigma}\,\theta^{\bar\epsilon}
 +i\Big((\mathfrak j\mathcal Z^\circ)_{\alpha\beta}-\mathcal Z^\circ_{\alpha\beta}\Big)\eta_1 +i \mathcal G^\circ_{\alpha\beta}\big(\eta_2+i\eta_3\big)-i(\mathfrak j\mathcal G^\circ)_{\alpha\beta}\big(\eta_2-i\eta_3\big)\Bigg\}\\
  \widehat{\mathcal M^\ast}_{\alpha\beta}&\overset{def}{\ =\ }{\mathcal M^\ast}_{\alpha\beta}-\Bigg\{-\mathcal E^\circ_{\alpha\beta\epsilon}\,\theta^{\epsilon}+\pi^{\sigma}_{\bar\epsilon}\Big((\mathfrak j\mathcal F^\circ)_{\alpha\beta\sigma}-i\mathcal D^\circ_{\alpha\beta\sigma}\Big)\,\theta^{\bar\epsilon}
 +\mathcal X^\circ_{\alpha\beta}\eta_1 + \mathcal Y^\circ_{\alpha\beta}\big(\eta_2+i\eta_3\big)+\mathcal Z^\circ_{\alpha\beta}\big(\eta_2-i\eta_3\big)\Bigg\}\\
  \widehat{\mathcal C^\ast}_{\alpha}&\overset{def}{\ =\ }{\mathcal C^\ast}_{\alpha}-\Bigg\{\mathcal G^\circ_{\alpha\epsilon}\,\theta^{\epsilon}-i\pi^{\sigma}_{\bar\epsilon}\mathcal Z^\circ_{\alpha\sigma}\,\theta^{\bar\epsilon}
 +(\mathcal N^\circ_1)_{\alpha}\eta_1 + (\mathcal N^\circ_2)_{\alpha}\big(\eta_2+i\eta_3\big)+(\mathcal N^\circ_3)_{\alpha}\big(\eta_2-i\eta_3\big)\Bigg\}\\
  \widehat{\mathcal H^\ast}_{\alpha}&\overset{def}{\ =\ }{\mathcal H^\ast}_{\alpha} -\Bigg\{-\mathcal Y^\circ_{\alpha\epsilon}\,\theta^{\epsilon}+i\pi^{\sigma}_{\bar\epsilon}\big(\mathcal G^\circ_{\alpha\sigma}-\mathcal X^\circ_{\alpha\sigma}\big)\theta^{\bar\epsilon}
 +(\mathcal N^\circ_4)_{\alpha}\eta_1 + (\mathcal N^\circ_5)_{\alpha}\big(\eta_2+i\eta_3\big)\\
 &\qquad\qquad\qquad\qquad\qquad\qquad\qquad\qquad\qquad\qquad\qquad+\Big((\mathcal N^\circ_1)_{\alpha}+i\pi^{\bar\sigma}_{\alpha}(\mathcal N^\circ_3)_{\bar\sigma}\Big)\big(\eta_2-i\eta_3\big)\Bigg\}\\
  \widehat{\mathcal R^\ast}&\overset{def}{\ =\ } \mathcal R^\ast-\Bigg\{4\pi^{\bar\sigma}_{\epsilon}(\mathcal N^\circ_3)_{\bar\sigma}\,\theta^{\epsilon}+4\pi^{\sigma}_{\bar\epsilon}(\mathcal N^\circ_3)_{\sigma}\,\theta^{\bar\epsilon}
 +i\big(\mathcal U^\circ_3-\overline{\mathcal U^\circ}_3\big)\eta_1 -i\big(\mathcal U^\circ_1+\mathcal W^\circ_3\big)\big(\eta_2+i\eta_3\big)\\
 &\qquad\qquad\qquad\qquad\qquad\qquad\qquad\qquad\qquad\qquad\qquad\qquad+i\big(\overline{\mathcal U^\circ}_1+\overline{\mathcal W^\circ}_3\big)\big(\eta_2-i\eta_3\big)\Bigg\}\\
  \widehat{\mathcal P^\ast}&\overset{def}{\ =\ }\mathcal P^\ast-\Bigg\{-4(\mathcal N^\circ_2)_{\epsilon}\,\theta^{\epsilon}-4\Big((\mathcal N^\circ_3)_{\bar\epsilon}+i\pi^{\sigma}_{\bar\epsilon}(\mathcal N^\circ_1)_{\sigma}\Big)\,\theta^{\bar\epsilon}
 +\mathcal U^\circ_1\eta_1 + \mathcal U^\circ_2\big(\eta_2+i\eta_3\big)+\mathcal U^\circ_3\big(\eta_2-i\eta_3\big)\Bigg\}\\
  \widehat{\mathcal Q^\ast}&\overset{def}{\ =\ }\mathcal Q^\ast-\Bigg\{4(\mathcal N^\circ_5)_{\epsilon}\,\theta^{\epsilon}+4i\pi^{\sigma}_{\bar\epsilon}\Big((\mathcal N^\circ_2)_{\sigma}+(\mathcal N^\circ_4)_{\sigma}\Big)\,\theta^{\bar\epsilon}
 +\mathcal W^\circ_1\eta_1 + \mathcal W^\circ_2\big(\eta_2+i\eta_3\big)+\mathcal W^\circ_3\big(\eta_2-i\eta_3\big)\Bigg\}.
 \end{split}
\end{equation*}
}

Since we have used here the formulae \eqref{FormulaeForCovDer}, we know (by Proposition~4.3 from \cite{MS}) that the three-forms $\Delta_{\alpha\beta}$, $\Delta_{\alpha}$ and $\Psi_s$ given by \eqref{Form_d2Gamma_ab}, \eqref{Form_d2phi_a}, \eqref{Form_d2psi_1} and \eqref{Form_d2psi_2} on $N$ will remain the same if replacing everywhere in the respective expressions the one-forms
\begin{equation*}
 \mathcal S^\ast_{\alpha\beta\gamma\delta},\  \mathcal V^\ast_{\alpha\beta\gamma},\ \mathcal  L^\ast_{\alpha\beta},\ \mathcal  M^\ast_{\alpha\beta}, \  \mathcal C^\ast_\alpha,\  \mathcal H^\ast_\alpha,\  \mathcal P^\ast, \ \mathcal Q^\ast,\   \mathcal R^\ast
\end{equation*} 	
by the one-forms
\begin{equation}\label{HatForms}
 \widehat{\mathcal S^\ast}_{\alpha\beta\gamma\delta},\  \widehat{\mathcal V^\ast}_{\alpha\beta\gamma},\ \widehat{\mathcal  L^\ast}_{\alpha\beta},\ \widehat{\mathcal  M^\ast}_{\alpha\beta}, \  \widehat{\mathcal C^\ast}_\alpha,  \ \widehat{\mathcal H^\ast}_\alpha,\  \widehat{\mathcal P^\ast}, \ \widehat{\mathcal Q^\ast},\   \widehat{\mathcal R^\ast}.
\end{equation} 
Therefore, if we repeat our computation from above for the new integral element $\widehat E\subset T_pN$, which is now determined by the vanishing of \eqref{HatForms}, 
%\begin{equation*}
 %\widehat{\mathcal S^\ast}_{\alpha\beta\gamma\delta}\ =\  \widehat{\mathcal V^\ast}_{\alpha\beta\gamma}\ =\ \widehat{\mathcal  L^\ast}_{\alpha\beta}\ =\ 
 %\widehat{\mathcal  M^\ast}_{\alpha\beta} \ =\ \widehat{\mathcal C^\ast}_\alpha \ =\  \widehat{\mathcal H^\ast}_\alpha\ =\  \widehat{\mathcal P^\ast} \  =\ 
 %%\widehat{\mathcal Q^\ast}\  =\ \widehat{\mathcal R^\ast}\ =\ 0,
%\end{equation*}
we will end up with the same result: The character sequence of $\mathcal I$  will be given again by the formulae \eqref{all_charachters} and the Cartan's test will remain satisfied. Thus $\widehat E$ must be again a Cartan-ordinary element of $\mathcal I$ and the Theorem follows by the Cartan's Third Theorem, as explained in Section~\ref{ex_dif_sys}.  
\end{proof}

%%%%%%%%%%%%%%%%%%%%

%\section{Appendix}

%%%%%%%%%%%%%%

\end{document}